\newcommand{\circo}{~\raisebox{1pt}{\tikz \draw[line width=0.6pt] circle(1.1pt);}~}
\newtheorem{thm}{Theorem}[section]
\newtheorem{defi2}[thm]{Definition}
\newenvironment{defi}
{\begin{defi2}\rm}{\end{defi2}}
\newtheorem{pro}[thm]{Proposition}
\newtheorem{cor}[thm]{Corollary}
\newtheorem{lem}[thm]{Lemma}
\newtheorem{rem2}[thm]{Remark}
\newenvironment{rem}
{\begin{rem2}\rm}{\end{rem2}}
\newtheorem{example2}[thm]{Example}
\newenvironment{example}
{\begin{example2}\rm}{\end{example2}}
\numberwithin{equation}{section}
\newcommand{\R}{\mathbb{R}}
\newcommand{\N}{\mathbb{N}}
\newcommand{\D}{\displaystyle}
\def\qq#1{\qquad \mbox{#1}\quad}
\newcommand{\al}{\alpha}
\newcommand{\be}{\beta}
\newcommand{\e}{\varepsilon}
\newcommand{\ga}{\gamma}
\newcommand{\la}{\lambda}
\newcommand{\na}{\nabla}
\newcommand{\Om}{\Omega}
\newcommand{\Omb}{\overline{\Om}}
\newcommand{\p}{\partial}
\newcommand{\s}{\sigma}
\newcommand{\te}{\theta}
\newcommand{\hookto}{\hookrightarrow}
\title[Elliptic systems with superlinear terms on the critical hyperbola]{Positive solutions of elliptic systems with superlinear terms on the critical hyperbola}
\author{Mabel Cuesta}
\address{Department of Mathematics, Université du Littoral Côte d'Opale (ULCO), Laboratoire de Mathématiques Pures et Appliqu\'ees Joseph Liouville (LMPA),		62100 Calais   (France)}
\email{\tt mabel.Cuesta@univ-littoral.fr}
\author{Rosa Pardo}
\address{Departamento de An\'alisis Matem\'atico y Matem\'atica Aplicada,  Universidad Complutense de Madrid, 28040{\textbf -}Madrid, Spain}
\email{\tt rpardo@ucm.es}
\author{Angela Pistoia}
\address{Dipartimento SBAI,	Sapienza Università di Roma, via Antonio Scarpa 16, 00161 Roma, Italy}
\email{\tt angela.pistoia@uniroma1.it}
\thanks{The second author is supported by grants PID2019-103860GB-I00, and PID2022-137074NB-I00,  MICINN,  Spain, and by UCM-BSCH, Spain, GR58/08, Grupo 920894. The third author is partially supported by GNAMPA-INdAM.}
\begin{document}
\maketitle
\begin{abstract}
We consider a slightly subcritical elliptic system with Dirichlet boundary conditions and a non-power nonlinearity in a bounded smooth domain.  For this problem, standard compact embeddings cannot be used to guarantee the existence of solutions as in the case of power-type nonlinearities.  Instead, we use the dual method on Orlicz spaces, showing that our problem possesses a mountain pass type solution. \\

\noindent\textbf{Keywords:} mountain pass solutions, critical Sobolev exponent, dual method
\medskip

\noindent\textbf{MSC2020:} 35B38, 35B33, 35J47, 35J67.
\end{abstract}

\section{Introduction}
Let us consider the system
\begin{equation}\label{s}
\left\{\begin{aligned}&-\Delta v= \frac{u^{p}}{\big(\ln (e+u)\big)^\alpha}\ \hbox{in}\ \Om\\
&-\Delta u= \frac{v^{q}}{\big(\ln (e+v)\big)^\beta}\ \hbox{in}\ \Om\\
& u>0,\, v>0\ \hbox{in}\ \Om\\
&u=v=0\ \hbox{on}\ \partial\Om\end{aligned}\right.
\end{equation}
where $\Om\subset\mathbb R^N$, $N\ge3$, is a  bounded domain of class $C^2,$
$p,q>0$, $\alpha\le p,\ \beta\le q,$ and $(p,q)$ may belong to the critical hyperbola; specifically, either
\begin{equation}\label{h:sub}
1>\frac1{p+1}+\frac1{q+1}>\frac{N-2}N,
\end{equation}
or
\begin{equation}\label{h}
\frac1{p+1}+\frac1{q+1}=\frac{N-2}N,\qquad  \frac{\al}{p+1}+ \frac{\be}{q+1}>0.
\end{equation}
We want to find a  solution $(u,v)$ of \eqref{s}, positive in both components.\\

Problems of type  \eqref{s} has been considered by several authors, we refer to \cite{Bonheure_dosSantos_Tavares, Clement_dePagter_Sweers_deThelin, deFigueiredo_doO_Ruf_2005_Orlicz, Mavinga_Pardo_JMAA}. In 
\cite{Bonheure_dosSantos_Tavares} the authors study the case \eqref{h:sub} and $\al=\be=0$.
Whereas in \cite[Theorem 2.7]{Clement_dePagter_Sweers_deThelin} the authors study related nonlinearities  when the pair of exponents $(p,q)$ lies below the critical Sobolev hyperbola \eqref{h:sub}, using  variational approaches. 
In \cite[Theorem 1.3]{deFigueiredo_doO_Ruf_2005_Orlicz}, using an Orlicz-space approach, the authors study the existence of solutions of \eqref{s}  when the pairs of exponents $(p,q)$, $(\al,\be)$ satisfy \eqref{h}, in particular $(p,q)$ lies on the critical Sobolev hyperbola, with $p,q>1$; they do not cover the case $p,q>0$. 
While in \cite{Mavinga_Pardo_JMAA} 
the authors establish a-priori $L^\infty$ bounds when the pair of exponents $(p,q)$ lies on the critical Sobolev hyperbola \eqref{h}, $1< p,q<\infty$, and $\displaystyle\alpha,\, \beta> 2/(N-2).$ 
Concerning one single equation, we can mention \cite{Castro_Pardo_RMC} establishing $L^\infty$ a priori bounds when  $\alpha> 2/(N-2)$, \cite{Clapp-Pardo-Pistoia-Saldana} analyzing the asymptotic behavior  of the solutions as $\al\to 0$, and \cite{Cuesta_Pardo_MilanJM}
including a changing sign weight.

\medskip

Our main result is the following:
\begin{thm}\label{main}
Assume $p,q>0$, $\alpha\le p,\ \beta\le q,$ and either $(p,q)$ satisfy \eqref{h:sub}  or  $(p,q)$, $(\al,\be)$ satisfy \eqref{h}. 
Then Problem \eqref{s} possesses  a solution of mountain pass type.
\end{thm}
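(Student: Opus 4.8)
The plan is to use the dual variational method in Orlicz spaces, following the scheme of de Figueiredo–do Ó–Ruf but adapted to the full range $p,q>0$. First I would recast the system in Hamiltonian form: write $f(u) = u^p/(\ln(e+u))^\alpha$ and $g(v) = v^q/(\ln(e+v))^\beta$, extended by $0$ for negative arguments, and introduce the inverse functions $f^{-1}, g^{-1}$. The associated $N$-functions are $F(t)=\int_0^t f$, $G(t)=\int_0^t g$, with complementary functions $\widetilde F, \widetilde G$; near infinity these behave like $|t|^{p+1}/(\ln|t|)^\alpha$ etc., so the relevant Orlicz spaces $L_{\widetilde F}$, $L_{\widetilde G}$ are logarithmic perturbations of the Lebesgue spaces $L^{(p+1)/p}$, $L^{(q+1)/q}$. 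Using the solution operator $K=(-\Delta)^{-1}$ of the Dirichlet Laplacian, the dual functional is
\begin{equation*}
\Phi(w,z) = \int_\Om \widetilde F(w) + \int_\Om \widetilde G(z) - \int_\Om (Kw)\, z
\end{equation*}
on $X := L_{\widetilde F}(\Om)\times L_{\widetilde G}(\Om)$, where critical points $(w,z)$ give solutions via $u = Kz$, $v = Kw$, $w = g(v)$, $z = f(u)$. The essential point is that the embeddings $K: L_{\widetilde F}\to L_G$ and $K: L_{\widetilde G}\to L_F$ are \emph{compact}: under \eqref{h:sub} this is strictly subcritical and standard; under \eqref{h} one is exactly on the hyperbola but the logarithmic factor with $\alpha/(p+1)+\beta/(q+1)>0$ restores compactness, which is where the non-power nonlinearity does real work.

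Next I would verify the mountain pass geometry for $\Phi$ on $X$. Near the origin, the quadratic cross term $\int (Kw)z$ is dominated by $\int\widetilde F(w)+\int\widetilde G(z)$ because $\widetilde F,\widetilde G$ grow faster than quadratically at $0$ in the relevant sense (here $p,q>0$ with the subcritical or critical constraint ensures the dual exponents are finite and $>1$), giving a local minimum with $\Phi\ge\rho_0>0$ on a small sphere. For a direction where $\Phi\to-\infty$, I would pick $(w,z)=t(w_0,z_0)$ with $w_0,z_0\ge0$ chosen so that $\int(Kw_0)z_0>0$; since $\widetilde F,\widetilde G$ are subquadratic... no — superquadratic at infinity only if $p<1$; more carefully, one balances the growth of $\widetilde F(tw_0)\sim t^{(p+1)/p}/(\ln t)^{\text{stuff}}$ against $t^2\int(Kw_0)z_0$, and since $(p+1)/p$ and $(q+1)/q$ can be less than $2$, one uses a two-parameter path $(tw_0, sz_0)$ and optimizes, or equivalently works with the natural mountain pass for the indefinite quadratic form, exploiting that the cross term beats both $N$-function terms along a suitable ray. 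I would then check the Palais–Smale condition: a $(PS)_c$ sequence is bounded in $X$ by the mountain pass estimate together with the superlinearity $tf(t)\ge (1+\delta)F(t)$ for large $t$ (valid since $p>0$... again the logarithm weakens this, so one needs the Ambrosetti–Rabinowitz-type condition in the Orlicz setting, i.e. $\widetilde F(w)\le \theta\, w\,\widetilde f(w)$ for some $\theta<1/2$, which holds because $\alpha\le p$, $\beta\le q$); boundedness plus compactness of $K$ then yields a strongly convergent subsequence.

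The main obstacle — and the step I would spend the most care on — is establishing the compactness of $K$ from $L_{\widetilde F}$ into $L_G$ (and symmetrically) precisely on the critical hyperbola case \eqref{h}. This requires a refined Sobolev-type embedding for Orlicz spaces: one must show that $(-\Delta)^{-1}$ maps the logarithmically-enlarged space $L_{\widetilde F}$ not just continuously but compactly into $L_G$, using that the gain $\alpha/(p+1)+\beta/(q+1)>0$ creates a genuine (logarithmic) gap below the critical embedding. I would handle this by interpolation between the borderline Sobolev embedding and an $\varepsilon$-better Lebesgue embedding, or by a direct truncation argument splitting $\{|u|\le M\}$ and $\{|u|>M\}$ and using the logarithmic decay of $G(t)/t^{2^*}$-type ratios. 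Once compactness is in hand, the mountain pass theorem of Ambrosetti–Rabinowitz applies and gives a nontrivial critical point $(w,z)$ of $\Phi$; a bootstrap/regularity and maximum-principle argument (using $f,g\ge0$ and $f,g$ increasing) then shows $u=Kz>0$, $v=Kw>0$ in $\Om$, completing the proof.
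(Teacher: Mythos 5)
Your plan follows essentially the same route as the paper: dualize via $\mathtt K=(-\Delta)^{-1}$ and the inverted nonlinearities, work on the product Orlicz space $X=L^{\widetilde A}(\Om)\times L^{\widetilde B}(\Om)$, establish compactness of the relevant second-order Orlicz--Sobolev embeddings, and apply the mountain pass theorem. However, one step as you state it is actually false, and it is precisely the step on which the whole (PS) argument rests. You propose to bound Palais--Smale sequences using an Ambrosetti--Rabinowitz condition $\widetilde F(w)\le \theta\, w\,\widetilde f(w)$ with some $\theta<1/2$, claiming it holds because $\alpha\le p$, $\beta\le q$. But $\widetilde F(w)\sim w^{(p+1)/p}(\ln w)^{\alpha/p}$ gives $w\widetilde f(w)/\widetilde F(w)\to (p+1)/p$, so the best admissible constant is $\theta\approx p/(p+1)$, which is below $1/2$ only when $p<1$; since \eqref{h:sub} and \eqref{h} force $pq>1$, at least one of $p,q$ exceeds $1$ and the symmetric AR inequality fails for that component. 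Testing $J'$ against $\tfrac12(f,g)$ therefore does not produce the needed sign. The paper circumvents this by testing against the asymmetric pair $\bigl[\theta f,(1-\theta)g\bigr]$ with $\theta=\tfrac12\bigl(\tfrac{p}{p+1}+\tfrac1{q+1}\bigr)$, which still reproduces half the quadratic term (by $\int_\Om f\,\mathtt K(g)=\int_\Om g\,\mathtt K(f)$) while making both coefficients $\tfrac{p}{p+1}-\theta$ and $\tfrac{q}{q+1}-(1-\theta)$ equal to $\tfrac12\bigl(1-\tfrac1{p+1}-\tfrac1{q+1}\bigr)>0$; the logarithmic remainders are then absorbed as $\e\, t\widetilde a(t)+C_\e$. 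Without this (or an equivalent device) your boundedness step collapses exactly in the regime the theorem addresses.

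Two further points you treat as routine deserve flagging. The compactness of $\mathtt K:L^{\widetilde A}\to L^{B}$ on the hyperbola itself cannot be obtained by interpolating with an ``$\e$-better Lebesgue embedding'': a logarithmic gain is invisible on the Lebesgue scale, which is why the paper invokes the optimal Orlicz--Sobolev embedding $W^{2,\widetilde A}(\Om)\hookto L^{A_1}(\Om)$ for every $A_1\prec\hspace{-1.5mm}\prec\big(({\widetilde A})^*\big)^*$ and verifies $B\prec\hspace{-1.5mm}\prec\big(({\widetilde A})^*\big)^*$ precisely under \eqref{h:sub} or \eqref{h}. Likewise, ``boundedness plus compactness of $\mathtt K$ yields a convergent subsequence'' is not automatic in a non-Hilbertian Orlicz space: one must also prove that the map $(f,g)\mapsto(\widetilde a(f),\widetilde b(g))$ is a homeomorphism of $X$ onto $X'$ (Proposition \ref{claim1}), which is the other half of the splitting $J'=\Phi-\Upsilon$ and requires a nontrivial argument via convergence in mean and a reverse dominated convergence step.
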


The proof relies on  the dual method which allows to reduce the existence of solutions to the problem \eqref{s} to  finding critical points of an energy functional defined on a suitable Orlicz space, whose accurate choice  is the main novelty of the present paper.\\
\begin{rem2} We would like to  comment on condition \eqref{h}. At this aim we 
 point out that Theorem \ref{main} also applies to the case of the single equation, namely $p=q=\frac{N+2}{N-2}$
and $\alpha=\beta>0$. In particular,   the problem
\begin{equation}\label{s-e}
\left\{\begin{aligned}&-\Delta u= \frac{u^{p}}{\big(\ln (e+u)\big)^\alpha}\ \hbox{in}\ \Om\\
& u>0\ \hbox{in}\ \Om\\
&u= 0\ \hbox{on}\ \partial\Om\end{aligned}\right.
\end{equation}
has always a positive solution. On the other hand, it is known  that Pohozaev's identity \cite{Pohozaev} ensures the non-existence of positive solutions on star-shaped domains to equation \eqref{s-e} whenever $\alpha\le0,$ see Remark \eqref{rem}.
\\
In a similar way, it could be interesting to prove  the {\em criticality} of our  condition \ref{h}. Indeed, a question naturally arises:
{\em if $p$ and $q$ lie on the critical hyperbola and 
$$ \frac{\al}{p+1}+ \frac{\be}{q+1}\le0$$
does the system \eqref{s} have any positive solutions on a star-shaped domain?}
In Theorem \ref{th:non:ex} we give a partial answer when both $\alpha$ and $\beta$ are not positive. The general case remains open.
\end{rem2}

The paper is organized as follows. In Section \ref{sec:dual} we describe the dual method which allows us to consider $p,\ q>0$. Roughly speaking, this method consists in taking the inverse of the Laplace operator, and defining  the inverse of the nonlinearities. In Section \ref{sec:proof} we prove our main result, that the associated energy functional, defined in a suitable product of Orlicz spaces, has a mountain pass geometry.
Section \ref{sec:crit} is devoted to analyze the  criticality of the condition \ref{h}.
Finally, Section \ref{sec:orl} summarize the theory of Orlicz spaces needed for our purposes.

\section{The dual method}
\label{sec:dual}
\subsection{The variational formulation}
Given any $s\in (1,\infty)$, we introduce 
the operator $\mathtt K:=(-\Delta)^{-1}: L^{s}(\Om)\to W^{2,s}(\Om)\cap W_0^{1,s}(\Om)$ which is defined as
$$\mathtt K (f):=u\qq{if and only if}  -\Delta u=f\ \hbox{in}\ \Om,\ u=0\ \hbox{on}\ \partial\Om.$$
Next, we set
\begin{equation}
\label{def:a:b}
a(t):=\frac{t^p}{\big(\ln (e+t)\big)^\alpha}, \qq{and}  b(t):= \frac{t^q}{\big(\ln (e+t)\big)^\beta}\ \hbox{ with}\ t\geq 0,
\end{equation}
and consider their primitive functions:
\begin{equation}
\label{def:A:B}
A(t):=\int\limits_0^t a(s)\, ds \qq{and}  B(t):=\int\limits_0^t b(s)\, ds  \qq{for all} t\geq 0.
\end{equation}

If $\alpha\le p$ and $\beta\le q$, the functions $a$, $b$ are strictly increasing and so invertible in $[0,+\infty)$. 
Let us denote by 
\begin{equation}
\label{def:tilde:a:b}
\tilde{a}:=a^{-1} \qq{and} \tilde{b}:=b^{-1}
\quad\text{the  inverse functions.} 
\end{equation}
If we set
$f=a(u)$  and $g=b(v)$ or equivalently $u=\tilde{a} (f)$ and $v=\tilde{b}(g),$ then problem \eqref{s} can be rewritten as
\begin{equation}\label{s2}\left\{\begin{aligned}&\tilde{b} (g)= \mathtt K (f)\ \hbox{in}\ \Om\\
&\tilde{a}( f)= \mathtt K(g)\ \hbox{in}\ \Om\\
& f>0,\, g>0 \ \hbox{in}\ \Om .
&\end{aligned}\right.\end{equation}
Let us introduce their primitive functions:
\begin{equation}
\label{def:tilde:A:B}
\widetilde A(t):=\int\limits_0^t \tilde{a}(s)\, ds \qq{and} \widetilde B(t):=\int\limits_0^t \tilde{b}(s)\, ds,
\end{equation}
for all $t\geq 0$, and  denote  $L^{\tilde A}(\Om)$ and $L^{\tilde B}(\Om)$  the {\it Orlicz spaces}
associated to the functions ${\widetilde A}$ and ${\widetilde B}$ respectively, cf. definition \ref{LH} and  Remark \ref{tildeh:h-1}.  Observe that, since  $\tilde{a}$ and $\tilde{b}$ are continuous and increasing  the functions $\tilde A$ are $\tilde B$ are $ \mathcal N$- functions, cf. definition \ref{Nfunctions}. We endowed the space  $L^{\tilde A}(\Om)$ (resp. $L^{\tilde B}(\Om)$) with either the {\it Luxembourg norm $\|\cdot\|_{(\tilde A)} $} or the {\it Orlicz norm } $\|\cdot \|_{\tilde A}$ (resp. $\|\cdot \|_{(\tilde B )}$ and $\|\cdot \|_{\tilde B}$), see definitions \eqref{def:L-norm} and \eqref{onorm} for those norms. From Lemma \ref{Ban} and Proposition \ref{proLH}(i), the spaces  $L^{\tilde A}(\Om)$ and  $L^{\tilde B}(\Om)$  are complete  with both the Orlicz or the Luxemburg norm.

\medskip
Problem \eqref{s2} has a {\it variational structure}.  Indeed, let us  denote by $X$ the Banach space 
$$
X=L^{\tilde{A}}(\Om)\times L^{\tilde{B}}(\Om)
$$
endowed with the norm $\|(f,g)\|_X: =\|f\|_{\tilde A}+ \|g\|_{\tilde B}$.

By the regularity result
Theorem \ref{regu},  the operator  ${\mathtt K}$ sends  $ L^{\tilde A}(\Om)$ into $ W_0^{1, {\widetilde A}}(\Om)\cap W^{2,{\tilde A}}(\Om)$, cf. Definition  \ref{def:O-S} of the {\it Orlicz-Sobolev spaces} $W_0^{1,H} (\Omega)$ and  $W^{2,H} (\Omega)$, when $H$ is equal to either $\tilde A$ or $\tilde B$.
Thus for any  $(f,g)\in X$, $({\mathtt K}(f),{\mathtt K}(g))\in W^{2,{\tilde A}}(\Om)\times W^{2,{\tilde B}}(\Om).$ 
By a {\it solution} $(u,v)$ of \eqref{s} we mean a pair $(u,v)\in \big(W_0^{1, {\widetilde A}}(\Om)\cap W^{2,{\tilde A}}(\Om)\big)\times\big( W_0^{1, {\widetilde B}}(\Om)\cap W^{2,{\tilde B}}(\Om)\big)$ satisfying \eqref{s} in the weak sense.
\medskip

Moreover, for $(p,q),\ (\al,\be)$ satisfying either \eqref{h:sub},
or \eqref{h}, 
we have that the embeddings  $W^{2,{\tilde A}}(\Om)\hookrightarrow L^{B}(\Om)$ and   $W^{2,{\tilde B}}(\Om)\hookrightarrow L^{A}(\Om)$, 
are continuous and compact, see Lemma \ref{lem:comp:emb}.
Consequently, $f\,{\mathtt K}(g)  \in L^1(\Om)$, $g\,{\mathtt K} (f) \in L^1(\Om)$,   see  the 2nd Holder's inequality \eqref{2HIn} in   Proposition \ref{proLH}. Furthermore, it follows from the definition of $\mathtt K$, that
\begin{equation}\label{K:na}
\int\limits_\Om f\,{\mathtt K}(g) \, dx=\int_\Om \nabla\, {\mathtt K}(f) \cdot \nabla\, {\mathtt K}(g) \, dx =\int\limits_\Om g\,{\mathtt K} (f) \, dx. 
\end{equation}

For convenience, let us extend $a$, $\tilde{a}$, $b$, $\tilde{b}$  in the whole $\mathbb{R}$ as  odd functions, and extend $A$, $\widetilde A$, $B$ and $\widetilde B$ as even functions.   We next  introduce the $C^1$-functional  $J:X\to \mathbb{R}$ defined by
\begin{equation}\label{phi}
J(f,g):=\int\limits_\Om \widetilde A(f)\, dx +  \int\limits_\Om\widetilde B(g)\, dx-\frac{1}{2}\int\limits_\Om \big(f\,\mathtt K (g)+g\,\mathtt K (f)\big)\, dx \end{equation}
whose derivative at $(f,g)\in X$ is equal to 
$$
J'(f,g)[\psi_1,\psi_2]=
\left(\int\limits_\Om \tilde{a}(f)\psi_1 \, dx +\int\limits_\Om\widetilde  b(g)\psi_2 \, dx\right) -\left(\int\limits_\Om \mathtt \psi_1 K (g)+\int\limits_\Om \psi_2\,\mathtt K (f) \, dx\right) 
,
$$
for all  $(\psi_1,\psi_2)\in X.$ \\

Trivially, the equations \eqref{s2} are the Euler-Lagrange equations associated to the action functional $J$.

\subsection{On the nonlinearities}
\begin{lem} \label{lem:tilde:aA}
{\rm (i)} There exists two constants $c_1,\ c_2>0$, only dependent of $p,$ and $\al,$ 
such that
\begin{equation}\label{eq:tilde:a}
c_1 t^\frac{1}{p}\big(\ln (e+t)\big)^{\frac{\al}{p}}\le \tilde{a} (t)\le c_2 t^\frac{1}{p}\big(\ln (e+t)\big)^{\frac{\al}{p}} ,\qq{for all}t\ge 0.
\end{equation} 
{\rm (ii)} There exists two constants $C_1,\ C_2>0$ only dependent of $p,$ and $\al,$ such that
\begin{equation*}
C_1\,  t^{\frac{p +1}p}\big(\ln (e+t)\big)^\frac\alpha p\le \widetilde A(t)\le C_2\, t^{\frac{p +1}p}(\big(\ln (e+t)\big)^\frac\alpha p ,\qq{for all}  t\ge 0.
\end{equation*}   
\end{lem}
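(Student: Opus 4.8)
The plan is to first isolate the one genuinely recurring estimate, namely the \emph{slowly varying} fact: for every fixed $\mu>0$ and $\gamma\in\R$ there are constants $c_0,C_0>0$ depending only on $\mu,\gamma$ with
$$c_0\,\ln(e+t)\ \le\ \ln\!\big(e+t^{\mu}(\ln(e+t))^{\gamma}\big)\ \le\ C_0\,\ln(e+t)\qquad\text{for all }t\ge0.$$
To prove this one splits into $t\le1$, where all three quantities lie between two positive constants, and $t\ge1$: once $t$ is large (threshold depending on $\mu,\gamma$) the middle argument is squeezed between $t^{\mu/2}$ and $t^{2\mu}$, because $(\ln(e+t))^{\pm|\gamma|}$ grows more slowly than any positive power of $t$; and $\ln(e+t^{\nu})\asymp\ln(e+t)$ for every fixed $\nu>0$. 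In particular, applying this with $(\mu,\gamma)=(p,-\al)$ and with $(\mu,\gamma)=(1/p,\al/p)$ gives $\ln(e+a(t))\asymp\ln(e+t)$ and $\ln(e+\tilde a(t))\asymp\ln(e+t)$, with constants depending only on $p,\al$.

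For part (i), write $t=a(\tau)$, so $\tilde a(t)=\tau$ and $\tau^{p}=t\,(\ln(e+\tau))^{\al}$, i.e. $\tilde a(t)=\tau=t^{1/p}(\ln(e+\tau))^{\al/p}$. Consequently
$$\frac{\tilde a(t)}{t^{1/p}(\ln(e+t))^{\al/p}}=\left(\frac{\ln(e+\tau)}{\ln(e+t)}\right)^{\al/p}=\left(\frac{\ln(e+\tilde a(t))}{\ln(e+t)}\right)^{\al/p},$$
and by the slowly varying estimate the right-hand side lies between two positive constants depending only on $p,\al$; this is exactly \eqref{eq:tilde:a}.

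For part (ii) the plan is to use the Legendre-type identity $\widetilde A(a(t))+A(t)=t\,a(t)$ for all $t\ge0$, which follows from $\tilde a\circ a=\mathrm{id}$ by the substitution $u=a(\tau)$ in $\widetilde A(a(t))=\int_0^{a(t)}\tilde a(u)\,du$ followed by integration by parts (both sides vanish at $t=0$). Since $A\ge0$ this gives $\widetilde A(a(t))\le t\,a(t)$ at once. For the reverse inequality I will show $A(t)\le\theta\,t\,a(t)$ with $\theta<1$ depending only on $p,\al$: writing $A(t)/(t\,a(t))=\int_0^1\frac{a(t\sigma)}{a(t)}\,d\sigma$, on $[1/2,1]$ the integrand is $\le1$ because $a$ is increasing, while on $[0,1/2]$ it is bounded by $\frac{a(t/2)}{a(t)}=2^{-p}\big(\tfrac{\ln(e+t)}{\ln(e+t/2)}\big)^{\al}\le 2^{-p}(1+\ln2)^{\max(\al,0)}=:\kappa$, and here the hypothesis $\al\le p$ gives $\kappa\le\big(\tfrac{1+\ln2}{2}\big)^{p}<1$; hence one may take $\theta=\tfrac12(1+\kappa)<1$ and $\widetilde A(a(t))=t\,a(t)-A(t)\ge(1-\theta)\,t\,a(t)$. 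Thus $\widetilde A(a(t))\asymp t\,a(t)=t^{p+1}(\ln(e+t))^{-\al}$ for all $t\ge0$. Substituting $s=a(t)$ (a bijection of $[0,\infty)$), using $t=\tilde a(s)\asymp s^{1/p}(\ln(e+s))^{\al/p}$ from part (i) together with $\ln(e+t)=\ln(e+\tilde a(s))\asymp\ln(e+s)$, one obtains $\widetilde A(s)\asymp s\,\tilde a(s)\asymp s^{(p+1)/p}(\ln(e+s))^{\al/p}$, which is the assertion. (Alternatively one can bypass the identity and estimate $\widetilde A(t)\asymp\int_0^{t}s^{1/p}(\ln(e+s))^{\al/p}\,ds$ directly, restricting the integral to $[t/2,t]$ for the lower bound and splitting at $\sqrt t$ for the upper bound when $\al<0$.)

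I expect the main obstacle to be twofold and of different natures: the slowly varying estimate is routine but must be handled carefully across the regimes $t\le1$ and $t\to\infty$ and for both signs of the exponents, while the uniform bound $A(t)\le\theta\,t\,a(t)$ with $\theta<1$ — the only non-formal step, and the precise place where the assumption $\al\le p$ is used — is what makes the comparison in part (ii) honest rather than merely asymptotic.
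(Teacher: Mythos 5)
Your proof is correct, but part (ii) takes a genuinely different route from the paper's. For part (i) the two arguments are close in spirit (both rest on the slow variation of the logarithm), though the mechanics differ: the paper evaluates $a$ at the candidate inverse $t(\ln(e+t))^{\al/p}$, shows this is comparable to $t^p$, and then applies the monotone $\tilde a$ and changes variables, whereas you invert implicitly via $t=a(\tau)$ and express the ratio $\tilde a(t)/\big(t^{1/p}(\ln(e+t))^{\al/p}\big)$ as a power of $\ln(e+\tilde a(t))/\ln(e+t)$. One small imprecision: you cannot apply your slowly-varying lemma directly with $(\mu,\gamma)=(1/p,\al/p)$ to conclude $\ln(e+\tilde a(t))\asymp\ln(e+t)$, since that presupposes the very form of $\tilde a$ you are proving; the correct (and immediate) derivation is to substitute $\tau=\tilde a(t)$ into $\ln(e+a(\tau))\asymp\ln(e+\tau)$, which you do have from the case $(\mu,\gamma)=(p,-\al)$. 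For part (ii) the paper integrates the bound of part (i), comparing $\int_0^t s^{1/p}(\ln(e+s))^{\al/p}\,ds$ with $t^{(p+1)/p}(\ln(e+t))^{\al/p}$ through the derivative estimate \eqref{sigma'<<} for $\s_{\ga,\nu}$; you instead invoke the Young equality $\widetilde A(a(t))=t\,a(t)-A(t)$ (which the paper itself uses later, in \eqref{ye}) together with the uniform bound $A(t)\le\te\, t\,a(t)$, $\te<1$, obtained from $A(t)/(t\,a(t))=\int_0^1 a(t\s)/a(t)\,d\s$ and the hypothesis $\al\le p$. Your route buys a clean two-sided comparison $\widetilde A(a(t))\asymp t\,a(t)$ without any antiderivative computation and makes visible exactly where $\al\le p$ enters; the paper's route is more self-contained at this point (it does not need the conjugacy identity) and produces, as a by-product, the derivative estimate \eqref{sigma'<<} and the limit \eqref{a:ta'} that are reused later. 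Both are valid.
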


\begin{proof}[Proof of Lemma \ref{lem:tilde:aA}]
(i) 
By definition, 
\begin{equation*}
a\Big(t\big(\ln (e+t)\big)^{\frac{\al}{p}}\Big) =  t^p\big(\ln (e+t)\big)^{\al}\left[\ln
\left(e+t\big(\ln (e+t)\big)^{\frac{\al}{p}}\right)\right]^{-\al}.
\end{equation*}
Now, checking that
\begin{equation*}\label{lim1}
\lim_{t\to 0}\frac{\D\ln
\left(e+t\big(\ln (e+t)\big)^{\frac{\al}{p}}\right)}{\ln (e+t)}=1,\qquad
\lim_{t\to \infty}\frac{\D\ln \left(e+t\big(\ln (e+t)\big)^{\frac{\al}{p}}\right)}{\ln (e+t)}=1,
\end{equation*}
and since that quotient of logarithms is a continuous, non vanishing function for $t\ge0$,
we can conclude that there exists two constants $d_1,\ d_2>0$ such that 
$$
d_1 t^p\le a\Big(t\big(\ln (e+t)\big)^{\frac{\al}{p}}\Big) \le  d_2 t^p,\qq{for all}t\ge 0,
$$ 
and since $\tilde{a}$ is increasing,
$$
\tilde{a} ( d_1 t^p)\le t\big(\ln (e+t)\big)^{\frac{\al}{p}} \le  \tilde{a} ( d_2 t^p),\qq{for all}t\ge 0.
$$
Now, firstly denoting by $s=d_1 t^p$, secondly using that 
$$
\lim_{t\to 0}\frac{\D\ln\left(e+t^{1/p}\right)}{\ln (e+t)}=1,\qquad
\lim_{t\to \infty}\frac{\D\ln\left(e+t^{1/p}\right)}{\ln (e+t)}=1,
$$
and thirdly, denoting by $s=d_2 t^p$, we obtain \eqref{eq:tilde:a}.

\bigskip

{\rm (ii)} Given $\ga>0,$ and $\nu\ge -\ga$, set
\begin{equation*}
\s_{\ga,\nu}(t)=\s(t):=t^\ga\, \big(\ln (e+t)\big)^\nu, \qq{for} t>0,
\end{equation*}
then
\begin{equation*}
\s'_{\ga,\nu}(t)=\s'(t)
=t^{\ga -1}\, \big(\ln (e+t)\big)^\nu\left[\ga +\nu\frac{1}{\ln (e+t)}\,\frac{t}{(e+t)}
\right].
\end{equation*}
Observe that 
\begin{equation*} 
a(t)=\s_{p,-\al}(t),
\end{equation*}
and that 
\begin{equation}\label{a:ta'}
\lim_{t\to\infty}   \frac{ta'(t)}{a(t)}=p.
\end{equation} 
Moreover, if $\nu\ge -\ga,$ then  $\s$ is increasing. 

Set
$$
h(t):=\frac{1}{\ln (e+t)}\frac{t}{(e+t)}>0, \qq{for} t>0.
$$
Trivially,  $\lim_{t\to 0}h(t)=0,$ $\lim_{t\to \infty}h(t)=0, $ and there exists a unique 
$t^*>0$ such that 
$$
\ h(t)\le h(t^*)=\frac{e}{e+t^*}<1, \qquad \forall t\ge 0.
$$
Hence
\begin{equation}
\label{sigma'<<}
\left(\ga +\frac{|\nu|\,e}{e+t^*}\right)^{-1}\s'_{\ga ,\nu}(t)
\le  t^{\ga -1}\, \big(\ln (e+t)\big)^\nu  \le \left(\ga -\frac{|\nu|\,e}{e+t^*}\right)^{-1}\s'_{\ga ,\nu}(t),
\end{equation}
and observe that whenever $\ga>1$, $t^{\ga -1}\, \big(\ln (e+t)\big)^\nu= \s_{\ga -1,\nu}(t)$, so
\begin{equation*}
\left(\ga +\frac{|\nu|\,e}{e+t^*}\right)^{-1}\s'_{\ga ,\nu}(t)
\le   \s_{\ga -1,\nu}(t) \le \left(\ga -\frac{|\nu|\,e}{e+t^*}\right)^{-1}\s'_{\ga ,\nu}(t).
\end{equation*}

By definition of $\widetilde A$ (see \eqref{def:A:B}), using \eqref{eq:tilde:a}, and \eqref{sigma'<<}, 
we can write
\begin{equation*}
\widetilde{A}(t) =\int\limits_0^t \tilde{a}(s)\, ds \le c_2 \int\limits_0^t s^\frac{1}{p}\big(\ln (e+s)\big)^{\frac{\al}{p}}\, ds\le C_2 \s_{\frac{p+1}{p}, \frac{\al}{p}} (t)=C_2 t^\frac{p+1}{p}\, \big(\ln (e+t)\big)^{\frac{\al}{p}}.
\end{equation*}
Likewise is obtained the reverse inequality.
\end{proof}

\begin{rem}\label{rem:A}
Likewise, there exists two constants $C'_1,\ C'_2>0$ only dependent of $p,$ and $\al,$ such that
\begin{equation*}
C'_1\,  t^{p +1}\big(\ln (e+t)\big)^{-\al}\le  A(t)\le C'_2\, t^{p +1}(\big(\ln (e+t)\big)^{-\al} ,\qq{for all}  t\ge 0.
\end{equation*}  
In fact, from \eqref{sigma'<<}, for all  $t\ge 0$,
\begin{equation*}
\left(p +1 +\frac{|\al|\,e}{e+t^*}\right)^{-1}\,  t\,a(t)\le  A(t)\le \left(p +1 -\frac{|\al|\,e}{e+t^*}\right)^{-1}\, t\,a(t) .
\end{equation*} 
\end{rem}

\begin{lem} \label{lem:norm:tilde:A}
(i) Assume that $\al\ge 0$. Then, for all  $f\in L^{\widetilde A}(\Om),$  and for the constants  $C_1,\ C_2>0$ provided by Lemma \ref{lem:tilde:aA},  the following hold
\begin{equation}\label{eq:tilde:A:<1}
\int_\Om  {\widetilde A} (f)\, dx\ge \frac{C_1}{C_2}\,\,\|f\|_{(\tilde A)}^{\,\frac{p +1}p} ,\qq{whenever}\|f\|_{(\tilde A)}\ge 1,
\end{equation}

\begin{equation}\label{eq:tilde:A:>1}
\int_\Om  {\widetilde A} (f)\, dx\le \frac{C_2}{C_1}\,\,\,  \|f\|_{(\tilde A)}^{\,\frac{p +1}p} ,\qq{whenever} \|f\|_{(\tilde A)}\le 1,
\end{equation} 
and 
\begin{equation}\label{eq:tilde:A:p+1}
\int_\Om  {\widetilde A} (f)\, dx \ge  C_1\,  \|f\|_{\frac{p +1}p}^{\,\frac{p +1}p}.
\end{equation}

{\rm (ii)} Assume now that $\al<0$. Then, 
\begin{equation}\label{eq:tilde:A:<2}
\int_\Om  {\widetilde A} (f)\, dx\le \frac{C_2}{C_1}\|f\|_{(\tilde A)}^{\,\frac{p +1}p} ,\qq{whenever} \|f\|_{(\tilde A)}\ge 1,
\end{equation} 
\begin{equation}\label{eq:tilde:A:>2}
\int_\Om  {\widetilde A} (f)\, dx\ge \frac{C_1}{C_2}\,\,  \|f\|_{(\tilde A)}^{\,\frac{p +1}p} ,\qq{whenever} \|f\|_{(\tilde A)}\le 1,
\end{equation} 
and if $f\in L^\frac{p +1}p (\Om)$, then
\begin{equation*}
\int_\Om  {\widetilde A} (f)\, dx \le  C_2\,  \|f\|_{\frac{p +1}p}^{\,\frac{p +1}p}.
\end{equation*}

\end{lem}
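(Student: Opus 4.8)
The plan is to obtain all of the inequalities from the single two-sided bound of Lemma \ref{lem:tilde:aA}(ii), namely
\[
C_1\,\Phi(t)\le \widetilde A(t)\le C_2\,\Phi(t),\qquad \Phi(t):=t^{\frac{p+1}{p}}\big(\ln(e+t)\big)^{\frac{\alpha}{p}},\quad t\ge 0,
\]
together with two elementary facts: (a) $\ln(e+t)\ge 1$ for all $t\ge0$; (b) $t\mapsto\big(\ln(e+t)\big)^{\alpha/p}$ is non-decreasing when $\alpha\ge0$ and non-increasing when $\alpha<0$. The only non-elementary ingredient is the relation between the Luxemburg norm and the modular $\rho(f):=\int_\Omega\widetilde A(f)\,dx$ recalled in Proposition \ref{proLH}: the sandwich above forces the $\Delta_2$-condition on $\widetilde A$ (the ratio $\widetilde A(2t)/\widetilde A(t)$ stays bounded because the corresponding ratio of logarithmic factors does), and hence $\rho\big(f/\|f\|_{(\tilde A)}\big)=1$ for every $f\neq0$; we may assume $f\neq0$ throughout, the case $f=0$ being trivial.

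The two $L^{\frac{p+1}{p}}$ estimates, \eqref{eq:tilde:A:p+1} and its analogue in part (ii), are immediate from (a): if $\alpha\ge0$ then $\widetilde A(t)\ge C_1\Phi(t)\ge C_1\,t^{\frac{p+1}{p}}$, and integrating over $\Omega$ gives \eqref{eq:tilde:A:p+1}; if $\alpha<0$ then $\widetilde A(t)\le C_2\Phi(t)\le C_2\,t^{\frac{p+1}{p}}$, which integrated gives the reverse bound.

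For the four Luxemburg-norm estimates, set $s:=\|f\|_{(\tilde A)}$, so that $\rho(f/s)=1$. The crucial observation, coming straight from (b), is that for every $u>0$ one has $\Phi(u/s)\ge s^{-\frac{p+1}{p}}\Phi(u)$ precisely when $\big(\ln(e+u/s)\big)^{\alpha/p}\ge\big(\ln(e+u)\big)^{\alpha/p}$, i.e. when ($\alpha\ge0$ and $s\le1$) or ($\alpha<0$ and $s\ge1$), while in the two complementary cases the reverse inequality holds. When the direction is ``$\ge$'', substitute $\widetilde A\ge C_1\Phi$ into $1=\rho(f/s)$ to get $1\ge C_1 s^{-\frac{p+1}{p}}\int_\Omega\Phi(|f|)\,dx\ge \frac{C_1}{C_2}\,s^{-\frac{p+1}{p}}\rho(f)$, that is $\rho(f)\le\frac{C_2}{C_1}s^{\frac{p+1}{p}}$; this is \eqref{eq:tilde:A:>1} (case $\alpha\ge0,\ s\le1$) and \eqref{eq:tilde:A:<2} (case $\alpha<0,\ s\ge1$). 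When the direction is ``$\le$'', substitute instead $\widetilde A\le C_2\Phi$ into $1=\rho(f/s)$ to get $1\le C_2 s^{-\frac{p+1}{p}}\int_\Omega\Phi(|f|)\,dx\le \frac{C_2}{C_1}\,s^{-\frac{p+1}{p}}\rho(f)$, that is $\rho(f)\ge\frac{C_1}{C_2}s^{\frac{p+1}{p}}$; this is \eqref{eq:tilde:A:<1} (case $\alpha\ge0,\ s\ge1$) and \eqref{eq:tilde:A:>2} (case $\alpha<0,\ s\le1$).

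The computations are routine; the actual work is the bookkeeping of the four sign/size combinations and checking that the scaling factor $s^{-(p+1)/p}$ leaves the integral in the correct direction each time. The one genuinely delicate point is the borderline case $s=1$ in the lower bounds \eqref{eq:tilde:A:<1} and \eqref{eq:tilde:A:>2}: there the argument above goes through verbatim only because the modular \emph{identity} $\rho(f/s)=1$ — not merely $\rho(f/s)\le1$ — is available, which is exactly where the $\Delta_2$-property of $\widetilde A$, together with the finiteness of $|\Omega|$, enters; without $\Delta_2$ one would need a slightly more careful limiting argument letting $\mu\uparrow s$ in the definition of the Luxemburg norm.
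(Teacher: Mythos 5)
Your proof is correct and follows essentially the same route as the paper: the two-sided bound $C_1\Phi\le\widetilde A\le C_2\Phi$ from Lemma \ref{lem:tilde:aA}(ii) (the paper denotes your $\Phi$ by $\widetilde A_1$), the modular identity $\int_\Omega\widetilde A\big(f/\|f\|_{(\tilde A)}\big)\,dx=1$ from Lemma \ref{Ban}(iv), and the monotonicity of the logarithmic factor to compare $\Phi(u/s)$ with $s^{-(p+1)/p}\Phi(u)$ in each of the four sign/size combinations. Your observation that the exact modular identity (hence the $\Delta_2$-condition) is genuinely needed only for the two lower bounds, while $\rho(f/s)\le1$ suffices for the upper bounds, is a correct refinement that the paper does not spell out.
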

\begin{rem} Likewise, for all  $f\in L^{A}(\Om),$  and for the constants  $C'_1,\ C'_2>0$ provided by Remark \ref{rem:A}, the following hold

{\rm (i)} Assume that $\al\ge 0$. Then, 
\begin{equation*}
\int_\Om  {A} (f)\, dx\le \frac{C'_2}{C'_1}\,\,\|f\|_{(A)}^{p +1} ,\qq{whenever}   \|f\|_{(A)}\ge 1,
\end{equation*} 
and 
\begin{equation*}
\int_\Om  {A} (f)\, dx\ge \frac{C'_1}{C'_2}\,\,\,  \|f\|_{(A)}^{p +1} ,\qq{whenever} \|f\|_{(A)}\le 1.
\end{equation*} 
and if $f\in L^{p +1} (\Om)$, then
\begin{equation}\label{eq:A:p+1:>}
\int_\Om   A (f)\, dx \le  C'_2\,  \|f\|_{p +1}^{\,p +1}.
\end{equation}

{\rm (ii)} Assume now that $\al<0$. Then
\begin{equation*}
\int_\Om  A (f)\, dx\ge \frac{C'_1}{C'_2}\,\,\|f\|_{(A)}^{p +1} ,\qq{whenever} \|f\|_{(A)}\ge 1,
\end{equation*}

\begin{equation}\label{eq:A:<2}
\int_\Om  A (f)\, dx\le \frac{C'_2}{C'_1}\,\,\,  \|f\|_{(A)}^{p +1} ,\qq{whenever} \|f\|_{(A)}\le 1.
\end{equation} 
and 
\begin{equation*}
\int_\Om  {A} (f)\, dx \ge  C'_1\,  \|f\|_{p +1}^{\,p +1}.
\end{equation*}
\end{rem}
\begin{proof}[Proof of Lemma \ref{lem:norm:tilde:A}]
Let 
\begin{equation*}
{\widetilde A_1}(s):=s^{\frac{p +1}p}\big(\ln (e+s)\big)^{\frac{\al}p}. 
\end{equation*}
Since Lemma \ref{lem:tilde:aA} 
$$
C_1\,  \int_\Om  {\widetilde A_1} (f)\, dx\le \int_\Om  {\widetilde A} (f)\, dx
\le C_2\,  \int_\Om  {\widetilde A_1} (f)\, dx.
$$

(i) Assume that $\al>0$. If $\|f\|_{(\tilde A)}\ge 1 $, then by Lemma  \ref{Ban}{\rm (iv)}
\begin{align*}
1&= \int_\Om  {\widetilde A} \left(\frac{|f|}{\|f\|_{(\tilde A)}}\right)\, dx    
\le C_2  \int_\Om \left(\frac{|f|}{\|f\|_{(\tilde A)}}\right)^{\frac{p +1}p}\big(\ln (e+|f|)\big)^{\frac{\al}p} \left(\frac{\ln \left(e+\frac{|f|}  {\|f\|_{(\tilde A)}}\right)}{\ln (e+|f|)}\right)^{{\frac{\al}p}}
\\
&\le   \frac{C_2}{\|f\|_{(\tilde A)}^{\,\frac{p +1}p}}\,\int_\Om |f|^{\frac{p +1}p}\big(\ln (e+|f|)\big)^{\frac{\al}p} =\frac{C_2}{\|f\|_{(\tilde A)}^{\,\frac{p +1}p}}\,  \int_\Om  {\widetilde A_1} (f)\, dx
\le \frac{C_2}{C_1}\,\,  \frac{1}{\|f\|_{(\tilde A)}^{\,\frac{p +1}p}}\, \int_\Om  {\widetilde A} (f)\, dx.
\end{align*}
Consequently, \eqref{eq:tilde:A:<1} holds. 

\medskip

Moreover, for all $f\in L^{\widetilde A}(\Om),$ with $\|f\|_{(\tilde A)}\le 1 $, 
\begin{align*}
1&=\int_\Om  {\widetilde A} \left(\frac{|f|}{\|f\|_{(\tilde A)}}\right)\, dx    \ge C_1  \int_\Om \left(\frac{|f|}{\|f\|_{(\tilde A)}}\right)^{\frac{p +1}p}\big(\ln (e+|f|)\big)^{\frac{\al}p} \left(\frac{\ln \left(e+\frac{|f|}  {\|f\|_{(\tilde A)}}\right)}{\ln (e+|f|)}\right)^{{\frac{\al}p}}\\
&\ge   \frac{C_1}{\|f\|_{(\tilde A)}^{\,\frac{p +1}p}}\,\int_\Om |f|^{\frac{p +1}p}\big(\ln (e+|f|)\big)^{\frac{\al}p} =\frac{C_1}{\|f\|_{(\tilde A)}^{\,\frac{p +1}p}}\,  \int_\Om  {\widetilde A_1} (f)\, dx
\ge \frac{C_1}{C_2}\,\,  \frac{1}{\|f\|_{(\tilde A)}^{\,\frac{p +1}p}}\, \int_\Om  {\widetilde A} (f)\, dx.
\end{align*}
So \eqref{eq:tilde:A:>1} holds.

Besides, for all $f\in L^{\widetilde A}(\Om),$
\begin{equation*}
\int_\Om  {\widetilde A} (f)\, dx \ge  C_1\,  \int_\Om  {\widetilde A_1} (f))\, dx = C_1 \int_\Om |f|^{\frac{p +1}p}\big(\ln (e+|f|)\big)^{\frac{\al}p} \,dx
\ge  C_1 \|f\|_{\frac{p +1}p}^{\,\frac{p +1}p}.
\end{equation*}

{\rm (ii)} Assume now that $\al<0$, and let us write now ${\widetilde A_2}(s):=s^{\frac{p +1}p}\big(\ln (e+s)\big)^{-\frac{|\al|}p}$. 
Assume now $\|f\|_{(\tilde A)}\le 1 $, then  
\begin{align*}
1&=\int_\Om  {\widetilde A} \left(\frac{|f|}{\|f\|_{(\tilde A)}}\right)\, dx    \le C_2  \int_\Om \left(\frac{|f|}{\|f\|_{(\tilde A)}}\right)^{\frac{p +1}p}\big(\ln (e+|f|)\big)^{-\frac{|\al|}p} \left(\frac{\ln (e+|f|)}{\ln \left(e+\frac{|f|}  {\|f\|_{(\tilde A)}}\right)}\right)^{{\frac{|\al|}p}}\\
&\le   \frac{C_2}{\|f\|_{(\tilde A)}^{\,\frac{p +1}p}}\,\int_\Om |f|^{\frac{p +1}p}\big(\ln (e+|f|)\big)^{-\frac{|\al|}p} =\frac{C_2}{\|f\|_{(\tilde A)}^{\,\frac{p +1}p}}\,  \int_\Om  {\widetilde A_1} (f)\, dx
\le \frac{C_2}{C_1}\,\,  \frac{1}{\|f\|_{(\tilde A)}^{\,\frac{p +1}p}}\, \int_\Om  {\widetilde A} (f)\, dx.
\end{align*}
So \eqref{eq:tilde:A:>2} holds.

\medskip

Moreover, for all $f\in L^{\widetilde A}(\Om),$ with $\|f\|_{(\tilde A)}\ge 1 $, 
\begin{align*}
1&= \int_\Om  {\widetilde A} \left(\frac{|f|}{\|f\|_{(\tilde A)}}\right)\, dx    \ge C_1  \int_\Om \left(\frac{|f|}{\|f\|_{(\tilde A)}}\right)^{\frac{p +1}p}\big(\ln (e+|f|)\big)^{-\frac{|\al|}p} \left(\frac{\ln (e+|f|)}{\ln \left(e+\frac{|f|}  {\|f\|_{(\tilde A)}}\right)}\right)^{{\frac{|\al|}p}}
\\
&\ge   \frac{C_1}{\|f\|_{(\tilde A)}^{\,\frac{p +1}p}}\,\int_\Om |f|^{\frac{p +1}p}\big(\ln (e+|f|)\big)^{-\frac{|\al|}p} =\frac{C_1}{\|f\|_{(\tilde A)}^{\,\frac{p +1}p}}\,  \int_\Om  {\widetilde A_1} (f)\, dx
\ge \frac{C_1}{C_2}\,  \frac{1}{\|f\|_{(\tilde A)}^{\,\frac{p +1}p}}\, \int_\Om  {\widetilde A} (f)\, dx.
\end{align*}
Consequently, \eqref{eq:tilde:A:<2} holds.\\

Finally,  Lemma \ref{lem:tilde:aA}(ii) ends the proof.
\end{proof}

\subsection{On the space $X=L^{\tilde A}(\Om)\times L^{\tilde B}(\Om)$}

\medskip

We  prove in this section that $X$ is reflexive.

To do so, in Proposition \ref{proLH}(v) is stated that we only need to check that the ${\mathcal N}$-functions satisfy the  $\Delta_2$-condition at infinity (see \eqref{d2infty} for a definition of  the  $\Delta_2$-condition at infinity).

\begin{pro}\label{ref}
$A$, $B$, ${\widetilde A}$, ${\widetilde B}$ satisfy the $\Delta_2$-condition, and
$X$ is a reflexive  Banach space. Moreover, the dual space 
\begin{equation}\label{dual:X}
X':=\left(L^{\tilde A}(\Om)\times L^{\tilde B}(\Om)\right)'= \left(L^{\tilde A}(\Om)\right)'\times \left(L^{\tilde B}(\Om)\right)'=
L^{ A}(\Om)\times L^{ B}(\Om)
\end{equation}
\end{pro}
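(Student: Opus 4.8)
The plan is to prove Proposition \ref{ref} in two stages: first establish the $\Delta_2$-condition at infinity for each of the four $\mathcal N$-functions $A$, $B$, $\widetilde A$, $\widetilde B$, and then invoke the abstract Orlicz-space machinery collected in Section \ref{sec:orl} — specifically Proposition \ref{proLH}(v) — to conclude reflexivity of $X$ together with the identification of the dual space in \eqref{dual:X}.

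\medskip
\emph{Step 1: the $\Delta_2$-condition.} Recall that $H$ satisfies the $\Delta_2$-condition at infinity if there are constants $k>0$ and $t_0\ge0$ with $H(2t)\le k\,H(t)$ for all $t\ge t_0$. By the two-sided estimates of Lemma \ref{lem:tilde:aA}(ii) and Remark \ref{rem:A}, it suffices to check $\Delta_2$ for the model functions $\sigma_{\gamma,\nu}(t)=t^\gamma(\ln(e+t))^\nu$ with $(\gamma,\nu)\in\{(\frac{p+1}p,\frac\alpha p),(p+1,-\alpha)\}$ for $\widetilde A$ and $A$, and symmetrically for $\widetilde B$ and $B$; a short computation gives
\[
\frac{\sigma_{\gamma,\nu}(2t)}{\sigma_{\gamma,\nu}(t)}=2^\gamma\left(\frac{\ln(e+2t)}{\ln(e+t)}\right)^\nu,
\]
and since the logarithmic ratio tends to $1$ as $t\to\infty$ and is continuous and bounded on $[1,\infty)$, the right-hand side is bounded by a constant $k$ depending only on $\gamma,\nu$. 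Transferring back through the constants $C_1,C_2$ (resp. $C_1',C_2'$) of Lemma \ref{lem:tilde:aA} and Remark \ref{rem:A} yields $H(2t)\le \frac{C_2}{C_1}\,2^\gamma\,k\,H(t)$ for $t$ large, i.e. all four functions are $\Delta_2$ at infinity; since $\Om$ is bounded, the $\Delta_2$-condition at infinity is the relevant one for the Orlicz space over $\Om$.

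\medskip
\emph{Step 2: reflexivity and the dual.} With $\widetilde A,\widetilde B\in\Delta_2$ at infinity, Proposition \ref{proLH}(v) gives that $L^{\tilde A}(\Om)$ and $L^{\tilde B}(\Om)$ are reflexive; a finite product of reflexive Banach spaces is reflexive, so $X=L^{\tilde A}(\Om)\times L^{\tilde B}(\Om)$ is reflexive, and the norm $\|(f,g)\|_X=\|f\|_{\tilde A}+\|g\|_{\tilde B}$ is equivalent to the natural product norm so nothing is lost. For the dual, the general duality of Orlicz spaces (Proposition \ref{proLH}, see also Remark \ref{tildeh:h-1}) identifies $\big(L^{\tilde A}(\Om)\big)'$ with $L^{A}(\Om)$ — here one uses that the complementary $\mathcal N$-function of $\widetilde A$ is, up to equivalence, $A$, precisely because $\tilde a=a^{-1}$ so that the Young complement of $\widetilde A(t)=\int_0^t a^{-1}$ is $\int_0^t a = A$ — and similarly $\big(L^{\tilde B}(\Om)\big)'=L^{B}(\Om)$; the dual of a product is the product of duals, giving \eqref{dual:X}.

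\medskip
The only genuinely delicate point is making sure the identification of the complementary function of $\widetilde A$ with $A$ is exact enough for the duality theorem to apply: the literal Young conjugate of $\widetilde A$ is $A$ by the inverse-function relationship, but one must confirm that the version of the Orlicz duality result being cited tolerates passing to an equivalent $\mathcal N$-function (which is harmless under $\Delta_2$). Everything else — the $\Delta_2$ verification and the product structure — is routine once Step 1's elementary logarithmic-ratio estimate is in hand.
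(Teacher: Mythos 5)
Your proof is correct, and its overall architecture (verify $\Delta_2$ for all four $\mathcal N$-functions, then feed this into Proposition \ref{proLH}(iv)--(v) and the product structure) matches the paper's. The one genuine difference is in how $\Delta_2$ is verified. You check it directly from the definition: you compute $\sigma_{\gamma,\nu}(2t)/\sigma_{\gamma,\nu}(t)=2^\gamma\bigl(\ln(e+2t)/\ln(e+t)\bigr)^\nu$, observe it is bounded, and transfer the bound to $A,B,\widetilde A,\widetilde B$ through the two-sided pointwise estimates of Lemma \ref{lem:tilde:aA}(ii) and Remark \ref{rem:A} (this transfer is legitimate because those are genuine multiplicative two-sided bounds valid for all $t\ge 0$, not merely equivalence of $\mathcal N$-functions in the sense of rescaled arguments). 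The paper instead invokes the Krasnosel'skii--Rutickii criterion of Lemma \ref{delta2bis}(iii): it computes $\lim_{t\to\infty} ta(t)/A(t)=p+1$ by l'H\^opital, and for $\widetilde A$ it avoids the explicit asymptotics altogether by the substitution $s=\tilde a(t)$ together with the Young equality $\widetilde A(a(s))=sa(s)-A(s)$, obtaining $\lim_{t\to\infty} t\tilde a(t)/\widetilde A(t)=1+\frac1p\in(1,\infty)$. Your route is more elementary but leans on the asymptotic formula for $\widetilde A$; the paper's route is slicker for the conjugates and, since Lemma \ref{delta2bis}(iii) yields $\Delta_2$ for $H$ \emph{and} $\widetilde H$ simultaneously, is in principle shorter. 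Finally, your closing worry is unfounded: since $\tilde a=a^{-1}$ is the right derivative of the Legendre transform of $A$ (Remark \ref{tildeh:h-1}), the function $\widetilde A=\int_0^t\tilde a$ is \emph{exactly} the Young conjugate of $A$ and $\widetilde{(\widetilde A)}=A$ exactly by Proposition \ref{pro:H*}(i), so no passage to equivalent $\mathcal N$-functions is needed in the duality step.
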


\begin{proof}
From Lemma \ref{Ban} and Proposition \ref{proLH}(i), the spaces  $L^{\tilde A}(\Om)$ and  $L^{\tilde B}(\Om)$  are complete  with both the Orlicz or the Luxemburg norm. From  Proposition \ref{proLH}{\rm (v)} it is enough to check that $A$, $B$, ${\widetilde A}$, ${\widetilde B}$ satisfy the $\Delta_2$-condition.  To do that, we apply Lemma \ref{delta2bis} (iii), see condition \eqref{delta22}.

Indeed, notice that  by l'Hôpital rule  
$$
\lim_{t\to+\infty}\frac{ta(t)}{A(t)}=\lim_{t\to+\infty}\frac{ta'(t)}{a(t)} +1=p+1,
\qq{and}
\lim_{t\to+\infty}\frac{tb(t)}{B(t)}=q+1,
$$
see \eqref{a:ta'} in the proof of Lemma \ref{lem:tilde:aA}.
Now, changing the the variable $s=\tilde{a}(t)$, using Young equality\eqref{ye}, and l'Hôpital rule   
$$
\lim_{t\to+\infty}\frac{t\tilde{a}(t)}{\tilde{A}(t)}
=\lim_{s\to+\infty}\frac{sa(s)}{\tilde{A}(a(s))}=\lim_{s\to+\infty}\frac{sa(s)}{sa(s)-A(s)}
=\lim_{s\to+\infty}\frac{a(s)+sa'(s)}{sa'(s)}=\frac1{p}+1,
$$
see also \eqref{a:ta'} in the proof of Lemma \ref{lem:tilde:aA}. Likewise,
$$
\lim_{t\to+\infty}\frac{t\tilde{b}(t)}{\tilde{B}(t)}
=\lim_{s\to+\infty}\frac{sb(s)}{\tilde{B}(b(s))}=\lim_{s\to+\infty}\frac{sb(s)}{sb(s)-B(s)}
=\lim_{s\to+\infty}\frac{b(s)+sb'(s)}{sb'(s)}=\frac1{q}+1.
$$

Finally, Proposition \ref{proLH} (iv) and the fact that $\widetilde{(\widetilde A)}=A$, and $\widetilde{(\widetilde A)}=A$ ends the proof.

\end{proof}
\medskip

n order to prove some geometric properties of the functional $J$, we will made use of the following continuous embeddings :

\begin{lem}
\label{lem:cont:emb}
If $\al,\ \be\ge 0,$
\begin{equation}\label{emb1}
L^{\tilde A}(\Om)\hookrightarrow L^{\frac{p+1}{p}}(\Om)\ \hbox{and}\ L^{\tilde B}(\Om)\hookrightarrow L^{\frac{q+1}{q}}(\Om), 
\end{equation} 
and  if $\al<0,$ (respectively, $\be<0$), for all $\e>0$  small enough,
\begin{equation}\label{emb2}
L^{\tilde A}(\Om)\hookrightarrow L^{\frac{p+1}{p}-\e}(\Om), \qquad
\big(\text{respectively,}\quad L^{\tilde B}(\Om)\hookrightarrow L^{\frac{q+1}{q}-\e}(\Om)\big). 
\end{equation} 
Likewise, if $\al,\ \be\ge 0$, for all $\e>0$  small enough,
\begin{equation}\label{emb3}
L^{A}(\Om)\hookrightarrow L^{p+1 -\e}(\Om)\ \hbox{and}\ L^{B}(\Om)\hookrightarrow L^{q+1-\e}(\Om), 
\end{equation} 
and  if $\al<0,$ (respectively, $\be<0$),
\begin{equation}\label{emb4}
L^{A}(\Om)\hookrightarrow L^{p+1}(\Om), \qquad
\big(\text{respectively,}\quad L^{B}(\Om)\hookrightarrow L^{q+1}(\Om)\big). 
\end{equation} 
\end{lem}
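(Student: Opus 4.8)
The plan is to deduce all four embeddings from the two-sided estimates for $\widetilde A$ and $A$ (Lemma \ref{lem:tilde:aA}(ii) and Remark \ref{rem:A}) together with the general fact that, for $\mathcal N$-functions $H_1,H_2$, one has $L^{H_1}(\Om)\hookrightarrow L^{H_2}(\Om)$ whenever $H_2$ grows no faster than $H_1$ near infinity (a standard consequence of the definition of the Luxemburg norm on a bounded domain). So first I would recall that it suffices to compare the defining $\mathcal N$-functions near $+\infty$, since on a bounded $\Om$ the behaviour near $0$ is irrelevant for the Orlicz space; and that by Lemma \ref{lem:tilde:aA}(ii), $\widetilde A(t)\simeq t^{\frac{p+1}{p}}\big(\ln(e+t)\big)^{\frac{\al}{p}}$, while by Remark \ref{rem:A}, $A(t)\simeq t^{p+1}\big(\ln(e+t)\big)^{-\al}$ (and symmetrically for $\widetilde B$, $B$ with $q,\be$).

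For \eqref{emb1}, when $\al\ge 0$ the logarithmic factor in $\widetilde A(t)\simeq t^{\frac{p+1}{p}}\big(\ln(e+t)\big)^{\frac{\al}{p}}$ is $\ge 1$ for $t$ large, hence $\widetilde A(t)\gtrsim t^{\frac{p+1}{p}}$, which gives the continuous inclusion $L^{\tilde A}(\Om)\hookrightarrow L^{\frac{p+1}{p}}(\Om)$; concretely this is exactly estimate \eqref{eq:tilde:A:p+1} of Lemma \ref{lem:norm:tilde:A}(i), so I would simply cite it, noting $\|f\|_{\frac{p+1}{p}}\le C\|f\|_{(\tilde A)}$ and that the Orlicz and Luxemburg norms are equivalent. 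For \eqref{emb2}, when $\al<0$ write $\al=-|\al|$; then for any $\e>0$ small, $t^{\frac{p+1}{p}-\e}\le C_\e\, t^{\frac{p+1}{p}}\big(\ln(e+t)\big)^{-\frac{|\al|}{p}}$ for all $t\ge 0$, because $t^{\e}\big(\ln(e+t)\big)^{\frac{|\al|}{p}}\to\infty$; hence $t^{\frac{p+1}{p}-\e}\lesssim \widetilde A(t)$ near infinity, giving $L^{\tilde A}(\Om)\hookrightarrow L^{\frac{p+1}{p}-\e}(\Om)$. The arguments for \eqref{emb3} and \eqref{emb4} are the mirror images using $A(t)\simeq t^{p+1}\big(\ln(e+t)\big)^{-\al}$: if $\al\ge 0$ the logarithm is now in the denominator, so one loses an $\e$ and gets $L^{A}(\Om)\hookrightarrow L^{p+1-\e}(\Om)$ via $t^{p+1-\e}\lesssim t^{p+1}\big(\ln(e+t)\big)^{-\al}$; if $\al<0$ the logarithm helps and one gets the clean inclusion $L^A(\Om)\hookrightarrow L^{p+1}(\Om)$, which is \eqref{eq:A:p+1:>} in the Remark following Lemma \ref{lem:norm:tilde:A}. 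The statements for $\tilde B$, $B$ are identical with $(p,\al)$ replaced by $(q,\be)$.

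The only genuinely routine points are: (a) justifying that comparing $\mathcal N$-functions at infinity controls the Orlicz norm on a bounded domain — I would phrase this as: if $H_2(t)\le K\,H_1(t)$ for all $t\ge t_0$, then (adjusting $H_2$ on $[0,t_0]$, which changes the norm only by an equivalent one since $|\Om|<\infty$) $\int_\Om H_2(|f|/\mu)\le K\int_\Om H_1(|f|/\mu) + C|\Om|$, from which $\|f\|_{(H_2)}\lesssim \|f\|_{(H_1)}+1$ and, by homogeneity-type scaling on the unit ball, the continuous embedding follows; and (b) the elementary limits $t^{\e}(\ln(e+t))^{\pm c}\to\infty$ as $t\to\infty$ used to absorb logarithmic factors into an arbitrarily small power. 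The main (and essentially only) obstacle is bookkeeping the direction of each inequality: one must be careful that when the logarithmic exponent is negative one cannot reach the endpoint Lebesgue exponent and must concede an $\e$, whereas when it is nonnegative the endpoint exponent is attained — getting these four cases paired correctly with $\al\ge0$ versus $\al<0$ is where an error would creep in, but the estimates in Lemma \ref{lem:norm:tilde:A} and the Remark after it already record exactly the inequalities needed.
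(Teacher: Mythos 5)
Your argument is correct and is essentially the paper's own proof: both reduce each embedding to a power-function lower bound on $\widetilde A$ (resp.\ $A$) near infinity --- conceding an $\e$ in the exponent exactly when the logarithmic factor has negative power --- and then conclude via the Luxemburg-norm modular inequality on the bounded domain $\Om$ (the paper derives the lower bound for \eqref{emb2} by bounding $a(t)\le c\,t^{p+\e}$ and inverting, rather than quoting Lemma \ref{lem:tilde:aA}(ii), but this is the same estimate). Two cosmetic slips: the inequality $t^{\frac{p+1}{p}-\e}\le C_\e\,t^{\frac{p+1}{p}}\big(\ln(e+t)\big)^{-|\al|/p}$ holds only for $t$ large, not ``for all $t\ge0$'' (harmless, as you note, since $|\Om|<\infty$), and for \eqref{emb4} the relevant recorded estimate is the unnumbered lower bound $\int_\Om A(f)\,dx\ge C'_1\|f\|_{p+1}^{p+1}$ in part (ii) of the Remark, not \eqref{eq:A:p+1:>}, which is the upper bound in the case $\al\ge0$.
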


\begin{proof}
We remark  that whenever $\al,\ \be\ge 0,$ \eqref{emb1} holds 
using that 
\begin{equation}\label{ineq}
a(t)\leq t^p \Longrightarrow \tilde{a}(t)\geq t^{1/p}\Longrightarrow
\widetilde{A}(t)\geq 
\frac{p}{p+1}t^{\frac{p+1}{p}}.   
\end{equation}
Moreover, as for all $u\in L^{\tilde A}(\Om)$, $u\not\equiv 0$,  we have from \eqref{2HIn2} in Proposition \ref{proLH}{\rm (iii)},
$$ 1\geq \int_\Om \tilde{A}\bigg( \frac{u}{\|u\|_{\tilde A}} \bigg) dx \geq \frac{p}{p+1} \int_\Om \bigg| \frac{u}{\|u\|_{\tilde A}}  \bigg|^{\frac{p+1}{p}}dx = \frac{p}{p+1} \frac{\|u \|_{\frac{p+1}{p}}^{\frac{p+1}{p}}}{\|u\|_{\tilde A}^{\frac{p+1}{p}}}\,,
$$
so, the embedding  \eqref{emb2}  is proved.

\bigskip

Besides, if $\al<0,$ (respectively, $\be<0$), for all $\e>0$  small enough, \eqref{emb2} holds.
Indeed, fix $\e >0$ small, there exists $c=c_\e>0$ such that 
$$
a(t)\leq\left\{ 
\begin{array}{lc}
ct^{p}& \hbox{ if } t\leq 1\\
ct^{p+\e} & \hbox{ if } t\geq  1,
\end{array}\right.
$$
then
$$
\tilde{a}(s)\geq 
\left\{
\begin{array}{lc}
c^{-1/p}s^{1/p}& \hbox{ if } s\leq c\\
c^{-1/(p+\e)}s^{1/(p+\e)} & \hbox{ if } s\geq  c,
\end{array}\right.
$$
and
$$
\tilde{A}(s)\geq \left\{
\begin{array}{lc}
\frac{p}{p+1} c^{-1/p}s^{(p+1)/p }& \hbox{ if } s\leq c\\
\frac{p+\e}{p+\e+1}c^{-1/(p+\e)}s^{(p+\e+1)/(p+\e)} + 
(\frac{p}{p+1} -\frac{p+\e}{p+\e +1})c & \hbox{ if } s\geq  c.
\end{array}\right.
$$
In particular 
$$
\tilde{A}(s)\geq C_1 s^{\frac{p+1}{p}-\e'} -C_2,
$$
for
$$
C_1:=\frac{p+\e}{p+\e+1}c^{-1/(p+\e)},
\quad C_2:=\max\left\{\frac{p+\e}{p+\e+1}c,\frac{\e\, c}{(p+\e +1)(p+1)}\right\},\quad  \e'=\frac{\e}{p(p+\e)}.
$$
with $C_1, C_2> 0$. From the above and by definition, if $u\in L^{\tilde{A}}(\Om)$, $u\not\equiv 0$, then
putting $v=\frac{u}{\|u\|_{\tilde A}}$
$$
1\geq\int_\Om \tilde A\big(v(x)\big)\, dx\ge \int_\Om C_1\, |v|^{\frac{p+1}{p}-\e'}dx -C_2|\Om|,
$$
so, 
$$
\big(1+C_2|\Om|\big)\,\|u\|_{\tilde A}^{\frac{p+1}{p}-\e'}\geq 
C_1\|u\|_{\frac{p+1}{p}-\e'}^{\frac{p+1}{p}-\e'}
$$
and the embedding \eqref{emb2} is proved.

The proofs of \eqref{emb3} and \eqref{emb4} are similar.
\end{proof}
\bigskip

The following Lemma is a technical one, that will be useful to prove   Proposition \ref{claim1}.  
\begin{lem}
\label{lem:tildeA:Atildea}
Let $a,\ A,\ \tilde{a},\ \tilde{A}$ be defined by \eqref{def:a:b}, \eqref{def:A:B}, \eqref{def:tilde:a:b}, and \eqref{def:tilde:A:B} respectively.
Here $\sim$ means {\it equivalent at infinity}, see  definition \ref{def:ll}. Then 
\begin{enumerate}
\item[\rm (i)] $A$ is equivalent to $\tilde{A}\circo a$ at infinity, $(A\sim  \tilde{A}\circo a)$, and $L^{A}(\Om)$ is isomorphic to $L^{ \tilde{A}\circo a}(\Om).$ 
\item[\rm {\rm (ii)}] $\tilde{A}$ is equivalent to $ A\circo\tilde{a}$ at infinity, $(\tilde{A}\sim  A\circo\tilde{a})$, and $L^{\tilde{A}}(\Om)$ is isomorphic to $L^{ A\circo\tilde{a}}(\Om)$. 
\end{enumerate}

\end{lem}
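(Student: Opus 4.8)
The plan is to derive both statements from the asymptotic two-sided estimates already established in Lemma \ref{lem:tilde:aA} and Remark \ref{rem:A}, since "equivalence at infinity" of two $\mathcal N$-functions $H_1,H_2$ means (per Definition \ref{def:ll}) that there are constants $k>0$ and $t_0\ge 0$ with $H_1(t)\le H_2(kt)$ and $H_2(t)\le H_1(kt)$ for $t\ge t_0$; and the isomorphism of the associated Orlicz spaces is then a standard consequence (equivalent $\mathcal N$-functions generate the same space with equivalent norms). So the whole argument reduces to a computation with the explicit functions $\sigma_{\gamma,\nu}(t)=t^\gamma(\ln(e+t))^\nu$.

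For part (i), I would first compute $\tilde A\circ a$ asymptotically. By Lemma \ref{lem:tilde:aA}(ii), $\widetilde A(s)\sim s^{(p+1)/p}(\ln(e+s))^{\alpha/p}$, and plugging in $s=a(t)=t^p(\ln(e+t))^{-\alpha}$ gives $\widetilde A(a(t))\sim t^{p+1}(\ln(e+t))^{-\alpha}\cdot\big(\ln(e+a(t))\big)^{\alpha/p}$. The key sub-step is the limit
\begin{equation*}
\lim_{t\to\infty}\frac{\ln\big(e+t^p(\ln(e+t))^{-\alpha}\big)}{\ln(e+t)}=p,
\end{equation*}
which is routine (the dominant term inside the logarithm is $t^p$ up to logarithmic corrections). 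Hence $\big(\ln(e+a(t))\big)^{\alpha/p}\sim (\ln(e+t))^{\alpha/p}\cdot p^{\alpha/p}$, and therefore $\widetilde A(a(t))\sim t^{p+1}(\ln(e+t))^{-\alpha+\alpha/p}\cdot(\ln(e+t))^{\alpha/p}$... one must keep track carefully: collecting the logarithmic powers gives exponent $-\alpha+\alpha/p+\alpha/p$? No — there is only one factor $(\ln(e+a(t)))^{\alpha/p}$, so the total logarithmic exponent is $-\alpha+\alpha/p$. That does not match $A(t)\sim t^{p+1}(\ln(e+t))^{-\alpha}$ unless $\alpha=0$, so instead of substituting the explicit power law I would substitute the sharper statement $\widetilde A(s)\sim \widetilde A_1(s)=s^{(p+1)/p}(\ln(e+s))^{\alpha/p}$ and then \emph{directly verify} $\widetilde A_1(a(t))\sim A(t)$ by writing both sides as $\sigma_{\gamma,\nu}$-type expressions and comparing; the mismatch above signals that one should use $A(t)\sim t\,a(t)$ (Remark \ref{rem:A}) together with the Young-type identity $\widetilde A(a(t))=t\,a(t)-A(t)$ used already in the proof of Proposition \ref{ref}, which gives $\widetilde A(a(t))=t a(t)-A(t)\sim t a(t)\big(1-\tfrac{1}{p+1}\big)=\tfrac{p}{p+1}t a(t)\sim A(t)$ cleanly. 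This is the cleaner route and avoids the delicate logarithm bookkeeping.

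For part (ii), the symmetric argument applies with the roles of $a$ and $\tilde a$, $A$ and $\tilde A$ interchanged: by the same Young equality (now reading $s=a(u)$, i.e. $u=\tilde a(s)$), $\widetilde A(s)=s\,\tilde a(s)-A(\tilde a(s))$, hence $A(\tilde a(s))=s\tilde a(s)-\widetilde A(s)$, and since $\lim_{s\to\infty}s\tilde a(s)/\widetilde A(s)=\tfrac1p+1$ (established in the proof of Proposition \ref{ref}), we get $A(\tilde a(s))\sim s\tilde a(s)-\widetilde A(s)\sim \widetilde A(s)\big(\tfrac1p+1-1\big)=\tfrac1p\widetilde A(s)$, i.e.\ $A\circ\tilde a\sim\widetilde A$. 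In both parts, once the asymptotic equivalence of the $\mathcal N$-functions is in hand, I would invoke the standard Orlicz-space fact (to be recorded in Section \ref{sec:orl}) that $H_1\sim H_2$ at infinity over a bounded domain $\Omega$ implies $L^{H_1}(\Om)=L^{H_2}(\Om)$ with equivalent norms, giving the claimed isomorphisms. The main obstacle is purely bookkeeping: making sure the "equivalent at infinity" comparison is applied to genuine $\mathcal N$-functions (monotone, convex, with the right behaviour at $0$ and $\infty$) and that the Young-equality manipulations are justified (which they are, since $a$ is a strictly increasing $C^1$ bijection of $[0,\infty)$ under $\alpha\le p$), rather than any real analytic difficulty.
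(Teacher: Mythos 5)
Your final argument is correct, but it takes a genuinely different route from the paper's. The paper proves both equivalences by the direct substitution you first attempted: it writes $\tilde A(a(\psi))\sim\bigl(|\psi|^{p}(\ln|\psi|)^{-\alpha}\bigr)^{\frac{p+1}{p}}(\ln|\psi|)^{\frac{\alpha}{p}}=|\psi|^{p+1}(\ln|\psi|)^{-\alpha}\sim A(\psi)$, and symmetrically for $A\circ\tilde a$, then invokes Proposition \ref{pro:iso}. Note that the ``mismatch'' you detected in this route is an arithmetic slip of your own, not a real obstruction: raising $s=t^{p}(\ln(e+t))^{-\alpha}$ to the power $\frac{p+1}{p}$ produces the logarithmic exponent $-\alpha\frac{p+1}{p}$ (not $-\alpha$), and adding the $\frac{\alpha}{p}$ coming from $\bigl(\ln(e+a(t))\bigr)^{\alpha/p}\sim p^{\alpha/p}(\ln(e+t))^{\alpha/p}$ gives $-\alpha\frac{p+1}{p}+\frac{\alpha}{p}=-\alpha$, exactly matching $A(t)\sim t^{p+1}(\ln(e+t))^{-\alpha}$; so the direct bookkeeping closes for every $\alpha$. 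Your replacement argument via the Young equality $\tilde A(a(t))=t\,a(t)-A(t)$ and $A(\tilde a(s))=s\,\tilde a(s)-\tilde A(s)$, combined with the limits $\lim_{t\to\infty}ta(t)/A(t)=p+1$ and $\lim_{t\to\infty}t\tilde a(t)/\tilde A(t)=\frac{1}{p}+1$ already established in the proof of Proposition \ref{ref}, yields $\tilde A(a(t))\sim p\,A(t)$ and $A(\tilde a(s))\sim \frac1p\tilde A(s)$, which is equivalence at infinity in the sense of Remark \ref{def:ll}(ii); this is a legitimate and arguably cleaner alternative, as it avoids the logarithm bookkeeping entirely and reuses identities the paper needs anyway. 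The remaining step (equivalence at infinity of $\mathcal N$-functions over a bounded domain gives isomorphic Orlicz spaces) is the same Proposition \ref{pro:iso} the paper uses, so your proof is complete once you correct the misstatement that the direct route fails for $\alpha\neq0$.
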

\begin{proof}
(i) Clearly, since Lemma \ref{lem:tilde:aA}, and Proposition \ref{pro:iso}, for any $\psi\in L^{ A}(\Om),$
$$
A(\psi)= A\big(|\psi|\big)\sim |\psi|^{p +1}\big(\ln |\psi|\big)^{-\alpha},\qq{and} \int_\Om  |\psi|^{p +1}\big(\ln |\psi|\big)^{-\alpha}\, dx\le C.
$$
Moreover,
$$
a(\psi)\sim |\psi|^{p}(\ln |\psi|)^{-\alpha},
$$
$$
\tilde{A}\big(a(\psi)\big)\sim \left(|\psi|^{p}(\ln |\psi|)^{-\alpha}\right)^{\frac{p +1}p}\big(\ln |\psi|\big)^{\frac{\alpha} p}=|\psi|^{p +1}\big(\ln |\psi|\big)^{-\alpha},
$$
so $A\sim  \tilde{A}\circo a$ at infinity, and
$$
\int_\Om \tilde{A}\big(a(\psi)\big)\, dx\le C, \qq{consequently}
a(\psi)\in L^{\tilde A}(\Om).
$$ 

{\rm (ii)} Likewise, for any $\psi\in L^{\tilde{A}}(\Om),$
$$
\tilde{A}(\psi)= \tilde{A}\big(|\psi|\big)\sim |\psi|^{\frac{p +1}p}\big(\ln |\psi|\big)^{\frac{\alpha} p},\qq{and} \int_\Om  |\psi|^{\frac{p +1}p}\big(\ln |\psi|\big)^{\frac{\alpha} p}\, dx\le C.
$$
Moreover,
$$
\tilde{a}(\psi)\sim 
|\psi|^\frac1{p}(\ln |\psi|)^{\frac{\alpha} p},
$$
and
$$
A\big(\tilde{a}(\psi)\big)\sim \left(|\psi|^\frac1{p}(\ln |\psi|)^{\frac{\alpha} p}\right)^{p+1}\big(\ln |\psi|\big)^{-\alpha}=
|\psi|^{\frac{p +1}p}\big(\ln |\psi|\big)^{\frac{\alpha} p},
$$
so $  \tilde{A}\sim A\circo \tilde{a}$ at infinity, and
$$
\int_\Om A\big(\tilde{a}(\psi)\big)\, dx\le C, \qq{consequently}
\tilde{a}(\psi)\in L^{A}(\Om).
$$ 
\end{proof}

\subsection{On the functional $J$}

Let us write $J'= \Phi-\Upsilon$, where $\Phi$  and $\Upsilon$ are defined, for all $(f,g)\in X$ and for all $(\psi_1,\psi_2)\in X$, as 
\begin{equation}\label{defPhi}
\Phi (f,g)[\psi_1,\psi_2]: =\int\limits_\Om \tilde{a}(f)\psi_1 \, dx +\int\limits_\Om\widetilde  b(g)\psi_2 \, dx,
\end{equation}
\begin{equation}\label{defUpsilon}\Upsilon(f,g)[\psi_1,\psi_2]:=\int\limits_\Om \psi_1\,\mathtt K (g)+\int\limits_\Om \psi_2\,\mathtt K (f) \, dx .
\end{equation}
\medskip

\noindent We will see that $\Upsilon$ is a continuous compact operator, see Proposition \ref{claim2}, and that $\Phi$  is an homeomorphism, see Proposition \ref{claim1} .\\

\medskip

We start by  proving that $\Upsilon$ is a compact operator. Let us recall that, by
using the regularity result Theorem \ref{regu}, $\big( \mathtt K (f),\mathtt K (g)\big) \in W^{2,\tilde A}(\Om)\times W^{2,\tilde B} (\Om)$ for all $(f,g)\in X$.  Furthermore, we have 

\begin{lem}\label{lem:comp:emb}
Let $p,q>0,\ \al,\be\in\R$ satisfying either \eqref{h:sub} or \eqref{h}.	
Then, the embeddings  $W^{2,{\tilde A}}(\Om)\hookrightarrow L^{B}(\Om)$ and   $W^{2,{\tilde B}}(\Om)\hookrightarrow L^{A}(\Om)$
are compact. 
\end{lem}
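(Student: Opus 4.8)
The plan is to reduce the claimed Orlicz–Sobolev embeddings to classical Rellich–Kondrachov compactness by sandwiching the relevant $\mathcal N$-functions between suitable powers. Consider first $W^{2,\tilde A}(\Om)\hookrightarrow L^{B}(\Om)$. By Lemma \ref{lem:cont:emb}, $L^{\tilde A}(\Om)\hookrightarrow L^{r}(\Om)$, where $r=\frac{p+1}{p}$ if $\al\ge 0$ and $r=\frac{p+1}{p}-\e$ for arbitrarily small $\e>0$ if $\al<0$; since differentiation commutes with these embeddings, $W^{2,\tilde A}(\Om)\hookrightarrow W^{2,r}(\Om)$ continuously. On the other hand, $L^{B}(\Om)\supseteq L^{q+1}(\Om)$ when $\be<0$ by \eqref{emb4}, and $L^{B}(\Om)\supseteq L^{q+1-\e}(\Om)$ for all small $\e>0$ when $\be\ge 0$ by \eqref{emb3}; in either case it suffices to produce a compact embedding $W^{2,r}(\Om)\hookrightarrow L^{\sigma}(\Om)$ with $\sigma$ slightly below $q+1$ (and $\ge q+1$ in the $\be<0$ case). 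I will then invoke Rellich–Kondrachov: $W^{2,r}(\Om)\hookrightarrow L^{\sigma}(\Om)$ compactly whenever $\sigma<r^{**}$, where $\frac1{r^{**}}=\frac1r-\frac2N$ (when $r^{**}$ is finite; if $2r>N$ the embedding into any $L^\sigma$, $\sigma<\infty$, is compact and there is nothing more to do). The same argument, with the roles of $(p,\al)$ and $(q,\be)$ exchanged, handles $W^{2,\tilde B}(\Om)\hookrightarrow L^{A}(\Om)$.

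The crux is the arithmetic of exponents, which is exactly where hypotheses \eqref{h:sub} and \eqref{h} enter. With $r=\frac{p+1}{p}$ one computes
\[
\frac{1}{r^{**}}=\frac{p}{p+1}-\frac{2}{N},
\]
so $W^{2,r}(\Om)\hookrightarrow L^\sigma(\Om)$ compactly for every $\sigma<\sigma^\star$ with $\frac1{\sigma^\star}=\frac{p}{p+1}-\frac2N$. I must check $\sigma^\star\ge q+1$, i.e.
\[
\frac1{q+1}\ge \frac{p}{p+1}-\frac2N
=1-\frac1{p+1}-\frac2N,
\qquad\text{equivalently}\qquad
\frac1{p+1}+\frac1{q+1}\ge \frac{N-2}{N}.
\]
This is precisely \eqref{h:sub}–\eqref{h}. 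In the strictly subcritical case \eqref{h:sub} the inequality is strict, so $q+1<\sigma^\star$ and Rellich–Kondrachov gives a compact embedding directly into $L^{q+1}(\Om)\hookleftarrow L^{B}(\Om)$ (no loss needed). On the critical hyperbola \eqref{h} one has $\sigma^\star=q+1$, so $W^{2,r}(\Om)\hookrightarrow L^{\sigma}(\Om)$ is compact for every $\sigma<q+1$; combined with $L^{B}(\Om)\supseteq L^{q+1-\e}(\Om)$ from \eqref{emb3}, this yields the compact embedding into $L^{B}(\Om)$. The borderline case requires that $\be\ge 0$ whenever $\al$ is not large enough to buy room on the left; but condition \eqref{h} asks $\frac{\al}{p+1}+\frac{\be}{q+1}>0$, so at least one of $\al,\be$ is positive, and by the symmetry of the two embeddings we may always arrange that the one whose space we are mapping \emph{into} is associated with a nonnegative exponent, or else absorb the loss on the domain side using \eqref{emb2}. (Concretely: if $\al<0$ we lose a little on $W^{2,\tilde A}$ via \eqref{emb2}, but then $\be>0$, so we gain on $L^B$ via \eqref{emb3}; a short bookkeeping of the two $\e$'s shows the gain dominates.)

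The main obstacle, then, is not any deep analysis but the careful tracking of these $\e$-losses on the critical hyperbola so that the two embeddings chain without a deficit — in particular verifying that the sign condition $\frac{\al}{p+1}+\frac{\be}{q+1}>0$ is exactly what guarantees a net gain in every case. Once the exponents are aligned, the compactness is immediate from Rellich–Kondrachov together with the continuous embeddings already established in Lemma \ref{lem:cont:emb} and the $W^{2,\tilde A}$-regularity of $\mathtt K$ recorded via Theorem \ref{regu}. I would organise the write-up as: (1) reduce to $W^{2,r}$ using \eqref{emb1}–\eqref{emb2}; (2) apply Rellich–Kondrachov to get compactness into $L^\sigma$ for $\sigma<\sigma^\star$ or all $\sigma$ if $2r>N$; (3) compute $\sigma^\star$ and compare with $q+1$ using \eqref{h:sub}/\eqref{h}; (4) absorb the endpoint via \eqref{emb3}–\eqref{emb4}, using the sign condition to settle the $\al<0$ or $\be<0$ subcases; (5) repeat verbatim with $p\leftrightarrow q$, $\al\leftrightarrow\be$.
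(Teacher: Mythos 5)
There is a genuine gap, and it is concentrated exactly where the lemma is delicate: the critical hyperbola \eqref{h}. Two problems. First, you have reversed the direction of the inclusions \eqref{emb3}--\eqref{emb4}. For $\be\ge 0$ one has $L^{q+1}(\Om)\subseteq L^{B}(\Om)\subseteq L^{q+1-\e}(\Om)$, and for $\be<0$ one has $L^{q+1+\e}(\Om)\subseteq L^{B}(\Om)\subseteq L^{q+1}(\Om)$; in particular $L^{q+1-\e}(\Om)\not\subseteq L^{B}(\Om)$ for any $\be$ and any $\e>0$, since $s^{q+1}(\ln s)^{-\be}/s^{q+1-\e}\to\infty$. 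So producing a compact embedding of $W^{2,r}(\Om)$ into $L^{q+1-\e}(\Om)$ does not yield an embedding into $L^{B}(\Om)$ at all: to land in $L^B(\Om)$ you must reach at least $L^{q+1}(\Om)$ (and $L^{q+1+\e}(\Om)$ if $\be<0$). Second, on the critical hyperbola there is no power-scale room to do this: the best Lebesgue exponent available on the domain side is $r=\frac{p+1}{p}$ (for $\al\ge0$ the logarithmic gain $(\ln s)^{\al/p}$ in $\widetilde A$ cannot be converted into any power gain, and for $\al<0$ you even lose a fixed power $\e'>0$ via \eqref{emb2}), and then $r^{**}=q+1$ exactly, so Rellich--Kondrachov gives compactness only into $L^\sigma(\Om)$ with $\sigma<q+1$ — strictly short of what is needed. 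The sign condition $\frac{\al}{p+1}+\frac{\be}{q+1}>0$ is a condition at the \emph{logarithmic} scale; it can never compensate a power-scale deficit, so the ``bookkeeping of the two $\e$'s'' you defer to cannot close.

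This is precisely why the paper does not reduce to classical Sobolev embeddings but invokes the logarithmically refined Orlicz--Sobolev embedding of Theorem \ref{th:emb} (Cianchi): when $\frac{p+1}{p}<\frac N2$ the optimal target satisfies $\big(({\widetilde A})^*\big)^*(s)\sim s^{\frac{N(p+1)}{Np-2(p+1)}}[\log s]^{\frac{\al N}{Np-2(p+1)}}$, which on the critical hyperbola is $s^{q+1}[\log s]^{\al(q+1)/(p+1)}$, and one checks $B\prec\hspace{-1.5mm}\prec\big(({\widetilde A})^*\big)^*$ by comparing logarithm exponents: $\frac{\al(q+1)}{p+1}+\be>0$, which is exactly \eqref{h}. (The paper also treats separately the regime $\frac{p+1}{p}\ge\frac N2$, where the embedding lands in $C_b(\Om)$ or in an exponential-type Orlicz space.) Your strategy does work, after correcting the inclusion directions, in the strictly subcritical case \eqref{h:sub}, where there is genuine power-type slack to absorb the $\e$-losses; but for the critical case you need the Orlicz version of the embedding theorem, not Rellich--Kondrachov on $L^r$ spaces.
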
 
\begin{proof}
We use   Theorem \ref{th:emb}. Assume first that $\frac{p+1}{p}\ge \frac{N}{2}.$
In particular, we  observe that if $\frac{p+1}{p}> \frac{N}{2}$, or  $\frac{p+1}{p}= \frac{N}{2}$ and $\al> \left(1-\frac{1}{N}\right)p \left(N-\frac{p+1}{p}\right)$, then $	W^{2,{\tilde A}}(\Om)\hookto C_b (\Om)$ is compact, where $C_b (\Om)=C(\Om)\cap L^\infty(\Om)$. Hence the embedding  
$$
W^{2,{\tilde A}}(\Om)\hookrightarrow L^{B}(\Om)
$$
is  also compact. 

Moreover, if $\frac{p+1}{p}= \frac{N}{2}$ and $\al\le  \left(1-\frac{1}{N}\right)p \left(N-\frac{p+1}{p}\right)$, then 
$$
W^{2,{\tilde A}}(\Om)\hookto L^{A_1} (\Om)
$$ 
is compact  for all ${\mathcal N}$-function $A_1$ such that 
$$
A_1\prec\hspace{-1.5mm}\prec\big({\widetilde A}^*\big)^*,
$$ 
where $\prec\hspace{-1.5mm}\prec$ is defined in \ref{def:dom}(iii),  and $\big({\widetilde A}^*\big)^*$,  at least  of exponential type, is defined by \eqref{def:Atilde:N:N}. Let us verify that $B\prec\hspace{-1.5mm}\prec\big({\widetilde A}^*\big)^*$ using that  $ B(s)\sim s^{q+1}(\ln s)^{-\be},$ see Lemma \ref{lem:tilde:aA} and Example \ref{example1}. 
Indeed, for all $c>0$  fixed, we have that 
\begin{equation*}
\lim\limits_{s\to+\infty}
\frac{{B}(s)}{\big({\widetilde A}^*\big)^*(c s)}=0, 
\end{equation*}
holds trivially.

Likewise, if $\frac{q+1}{q}\ge \frac{N}{2}$, then the embedding  $W^{2,{\tilde B}}(\Om)\hookrightarrow L^{A}(\Om)$, where $ A(s)\sim s^{p+1}(\ln s)^{-\al},$ is compact.\medskip

Let us now study the situation appearing when $\frac{p+1}{p}< \frac{N}{2}$.    
We  now remark (see Lemma \ref{lem:tilde:aA} and Example \ref{ex:HN:log})  that for $\frac{p+1}{p}< \frac{N}{2}$,
$$   
\widetilde A(s)\sim s^{\frac{p+1}{p}}(\ln s)^{\frac{\al}{p}},\ \hbox{ so then  }\ 
W^{2,{\tilde A}}(\Om)\hookto L^{A_1} (\Om)\qquad \forall A_1\prec\hspace{-1.5mm}\prec\big(({\widetilde A})^*\big)^*, 
$$
$$\qq{with} \big(({\widetilde A})^*\big)^*\sim
s^\frac{N(p+1)}{Np-2(p+1)}\,  	\big[\log(s)\big]^\frac{\al N}{Np-2(p+1)}.
$$

Let us verify that $B\prec\hspace{-1.5mm}\prec\big(({\widetilde A})^*\big)^*$, in other words, that for all $\delta>0$,
\begin{equation*}
\lim\limits_{s\to+\infty}
\frac{\big(({\widetilde A})^*\big)^*(\delta s)}{B(s)}=+\infty. 
\end{equation*}
Indeed, 
$$
\frac{\big(({\widetilde A})^*\big)^*(\delta s)}{B(s)}\sim  s^{\frac{N(p+1)}{Np-2(p+1)}-(q+1)}
\big[\log(s)\big]^{\frac{\al N}{Np-2(p+1)} +\be }
\to\infty ,
$$
if 
$$
\frac{N(p+1)}{Np-2(p+1)}> q+1, \qq{or} \frac{N(p+1)}{Np-2(p+1)}= q+1, \text{ and } \frac{\al N}{Np-2(p+1)} +\be>0,
$$
i.e. if 
$$
\frac{p}{p+1}-\frac{2}{N}<\frac{1}{q+1}, \qq{or}  \frac{N}{Np-2(p+1)}= \frac{q+1}{p+1}, \text{ and }  \frac{\al}{p+1}+ \frac{\be}{q+1}>0.
$$
This last inequalities are equivalent to
$$
\frac{N-2}{N}-\frac{1}{p+1}<\frac{1}{q+1},\qq{or}  \frac{1}{p+1}+\frac{1}{q+1}=\frac{N-2}{N}, \text{ and }  \frac{\al}{p+1}+ \frac{\be}{q+1}>0.
$$
The first inequality holds under hypothesis  \eqref{h:sub}, the second one holds by hypothesis  \eqref{h}.\\

Likewise if $\frac{q+1}{q}< \frac{N}{2}$, then the embedding $W^{2,{\tilde B}}(\Om)\hookrightarrow L^{A}(\Om)$
is compact, and the proof is achieved.
\end{proof}

\begin{pro}\label{claim2}
The linear operator $\Upsilon:X\to X'$ defined by
$$\Upsilon(f,g)[\psi_1, \psi_2]:= \int\limits_\Om \psi_1\,\mathtt K (g)\, dx +
\int\limits_\Om \psi_2\,\mathtt K (f) \, dx, \qquad \forall (\psi_1,\psi_2)\in X
$$
is a  continuous compact map.
\end{pro}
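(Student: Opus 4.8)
The plan is to prove Proposition \ref{claim2} in two moves: first linearity and boundedness, then compactness, decomposing $\Upsilon$ as the sum of the two ``cross'' operators $(f,g)\mapsto \big(\psi\mapsto \int_\Om \psi\,\mathtt K(g)\,dx\big)$ and $(f,g)\mapsto \big(\psi\mapsto \int_\Om \psi\,\mathtt K(f)\,dx\big)$. Linearity in $(f,g)$ is immediate since $\mathtt K=(-\Delta)^{-1}$ is linear and the pairing is linear in $\psi$; so $\Upsilon$ is a well-defined linear map into $X'=L^A(\Om)\times L^B(\Om)$ by Proposition \ref{ref}.

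For boundedness, I would estimate $|\Upsilon(f,g)[\psi_1,\psi_2]|\le \int_\Om |\psi_1|\,|\mathtt K(g)|\,dx+\int_\Om|\psi_2|\,|\mathtt K(f)|\,dx$ and apply the generalized Hölder inequality \eqref{2HIn} in Proposition \ref{proLH}: the first term is bounded by (a constant times) $\|\psi_1\|_{(\tilde A)}\,\|\mathtt K(g)\|_{A}$ — note $\psi_1\in L^{\tilde A}(\Om)$ pairs against $L^{A}(\Om)=(L^{\tilde A})'$. Then Theorem \ref{regu} gives $\mathtt K(g)\in W^{2,\tilde B}(\Om)$ with $\|\mathtt K(g)\|_{W^{2,\tilde B}}\lesssim \|g\|_{\tilde B}$, and the compact (hence continuous) embedding $W^{2,\tilde B}(\Om)\hookrightarrow L^{A}(\Om)$ from Lemma \ref{lem:comp:emb} yields $\|\mathtt K(g)\|_{A}\lesssim\|g\|_{\tilde B}$. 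Symmetrically for the second term with the embedding $W^{2,\tilde A}(\Om)\hookrightarrow L^{B}(\Om)$. Hence $\|\Upsilon(f,g)\|_{X'}\lesssim \|(f,g)\|_X$, and since $\Upsilon$ is linear this continuity is exactly what is claimed.

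For compactness, I would take a bounded sequence $(f_n,g_n)$ in $X$ and show $\Upsilon(f_n,g_n)$ has a convergent subsequence in $X'$. By Theorem \ref{regu}, $\big(\mathtt K(f_n),\mathtt K(g_n)\big)$ is bounded in $W^{2,\tilde A}(\Om)\times W^{2,\tilde B}(\Om)$; by the \emph{compact} embeddings of Lemma \ref{lem:comp:emb}, up to a subsequence $\mathtt K(f_n)\to w_1$ strongly in $L^{B}(\Om)$ and $\mathtt K(g_n)\to w_2$ strongly in $L^{A}(\Om)$. I then identify $\Upsilon(f_n,g_n)$, as an element of $X'=L^A(\Om)\times L^B(\Om)$, with the pair $\big(\mathtt K(g_n),\mathtt K(f_n)\big)$ acting by integration: indeed $\Upsilon(f,g)[\psi_1,\psi_2]=\int_\Om \mathtt K(g)\psi_1+\int_\Om \mathtt K(f)\psi_2$, so the norm of $\Upsilon(f_n,g_n)-\Upsilon(f_m,g_m)$ in $X'$ is controlled (via \eqref{2HIn} again) by $\|\mathtt K(g_n)-\mathtt K(g_m)\|_{A}+\|\mathtt K(f_n)-\mathtt K(f_m)\|_{B}$, which is Cauchy. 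Therefore $\Upsilon(f_n,g_n)$ converges in $X'$.

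The only delicate point is bookkeeping about which Orlicz space each factor lives in and that $L^A=(L^{\tilde A})'$, $L^B=(L^{\tilde B})'$ (Proposition \ref{ref}), so that the Hölder pairing \eqref{2HIn} applies with the correct conjugate pair; everything substantive is already packaged in Theorem \ref{regu} and Lemma \ref{lem:comp:emb}. Once the duality is set up correctly, compactness of $\Upsilon$ is just the composition of the bounded solution operator $\mathtt K$ with the compact embeddings, transported to $X'$ by the identification above.
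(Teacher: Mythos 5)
Your proof is correct and follows essentially the same route as the paper: both rest on the regularity estimate of Theorem \ref{regu}, the compact embeddings of Lemma \ref{lem:comp:emb}, and the H\"older pairing \eqref{2HIn}, with compactness obtained by identifying $\Upsilon(f_n,g_n)$ with the pair $(\mathtt K(g_n),\mathtt K(f_n))$ in $X'=L^{A}(\Om)\times L^{B}(\Om)$. The only cosmetic difference is that you deduce continuity from linearity plus boundedness, whereas the paper argues directly with convergent sequences; the substance is identical.
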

\begin{proof}
Let us prove the continuity.  Let $\{f_n\}_{n\in\N}$ be a sequence in $L^{\tilde A} (\Om)$ converging to some $f\in L^{\tilde A} (\Om)$   and    $\{g_n\}_{n\in\N}$ a  
sequence in $L^{\tilde B} (\Om)$ converging to some $g\in L^{\tilde B} (\Om)$. Let $\psi_1 \in L^{\tilde A} (\Om)$.
We have 
$$
\big| \Upsilon(f_n,g_n)[\psi_1, \psi_2]- 
\Upsilon (f,g)[\psi_1, \psi_2] \big| \leq   
\int_\Om \Big(\big|{\mathtt  K}(f_n)\psi_2-\mathtt K (f) \psi_2 \big|+
\big|{\mathtt  K}(g_n)\psi_1-\mathtt K (g) \psi_1\big|  \Big) \, dx
$$
$$ 
\leq C\Big(\big\|{\mathtt  K}(f_n)-\mathtt K (f)\big\|_{B}\, \|\psi_2\|_{\tilde B}
+ \big\|{\mathtt  K}(g_n)-\mathtt K (g)\big\|_{A}\, \|\psi_1\|_{\tilde A}\Big)
$$

We only have to prove that $\big\| \mathtt K (f_n) - \mathtt K (f)\big\|_B\to 0 $, the proof of $\big\| \mathtt K (g_n) - \mathtt K (g)\big\|_A\to 0 $ is analogous. This result follows from the  regularity results 
of Theorem \ref{regu} and the above Lemma \ref{lem:comp:emb}.
\medskip

Let us now  prove the compactness. Let $\{f_n\}_{n\in\N}$ be a bounded sequence in $L^{\tilde A} (\Om)$ and    $\{g_n\}_{n\in\N}$ a bounded 
sequence in $L^{\tilde B} (\Om)$. From \eqref{est2A} we have 
$$\|{\mathtt K}(f_n)\|_{W^{2, {\widetilde A}}}\leq C,\quad \|{\mathtt K}(g_n)\|_{W^{2, {\widetilde B}}}\leq C$$
for some constant $C$ independent of $n$. Then, using Lemma \ref{lem:comp:emb}  and the fact that $X$ is reflexive (cf. Proposition \ref{ref}, and Proposition \ref{proLH}) we infer the existence of $g\in L^{B} (\Om)$ and  $f\in L^{A} (\Om)$  such that, up to a subsequence, $\|{\mathtt K}(f_n) -g\|_{B}\to 0, \|{\mathtt K}(g_n) -f\|_{A}\to 0$ as $n\to\infty$.  Hence, using the 2nd Hölder's inequality,   for any $(\psi_1, \psi_2)\in X$, defining ${\mathcal I}_{f,g}[\psi_1,\psi_2] :=\int_\Om (f \psi_1 +g \psi_2)dx$,   $$
\big| \Upsilon(f_n,g_n)[\psi_1, \psi_2]- 
{\mathcal I}_{f,g}[\psi_1, \psi_2] \big| \leq   
\int_\Om \Big(\big|{\mathtt  K}(f_n)\psi_2-g \psi_2 \big|+
\big|{\mathtt  K}(g_n)\psi_1-f \psi_1\big|  \Big) \, dx
$$
$$ 
\leq C\Big(\big\|{\mathtt  K}(f_n)-g\big\|_{B}\, \|\psi_2\|_{\tilde B}
+ \big\|{\mathtt  K}(g_n)-f\big\|_{A}\, \|\psi_1\|_{\tilde A}\Big)
$$
so then $\| \Upsilon(f_n,g_n)- {\mathcal I}_{f,g} \|_{X'}\to 0 $ as  $n\to \infty$.
\end{proof}

\bigskip

We end this Section, proving that $\Phi$ is an homeomorphism.
\begin{pro}\label{claim1}
The non-linear operator $\Phi:X\to X'$ defined by
$$
\Phi (f,g)[\psi_1,\psi_2]:= \int\limits_\Om \tilde{a}(f)\psi_1 \, dx +\int\limits_\Om\tilde{b}(g)\psi_2 \, dx, \qq{for all} (\psi_1,\psi_2)\in X.
$$
is an homeomorphism.
\end{pro}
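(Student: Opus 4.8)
The plan is to show that $\Phi$ is a bijection from $X$ onto $X'$ with continuous inverse, exploiting the product structure: since $\Phi(f,g)[\psi_1,\psi_2] = \int_\Om \tilde a(f)\psi_1\,dx + \int_\Om \tilde b(g)\psi_2\,dx$, the operator splits as $\Phi(f,g) = (\Phi_1 f, \Phi_2 g)$ where $\Phi_1 : L^{\tilde A}(\Om) \to L^A(\Om) = (L^{\tilde A}(\Om))'$ is the Nemytskii-type map $f \mapsto \tilde a(f)$ and similarly $\Phi_2 : L^{\tilde B}(\Om) \to L^B(\Om)$. By Proposition \ref{ref} we know $X' = L^A(\Om) \times L^B(\Om)$, so it suffices to prove each factor $\Phi_1, \Phi_2$ is a homeomorphism; I will do this for $\Phi_1$ and note the argument for $\Phi_2$ is identical.

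First I would observe that $\Phi_1$ is well-defined and continuous: by Lemma \ref{lem:tildeA:Atildea}(ii), $\tilde a$ maps $L^{\tilde A}(\Om)$ into $L^{A}(\Om)$ (indeed $\tilde A \sim A \circ \tilde a$, so $\int_\Om A(\tilde a(f))\,dx < \infty$ whenever $f \in L^{\tilde A}(\Om)$), and continuity of the Nemytskii operator on Orlicz spaces follows from the growth bounds of Lemma \ref{lem:tilde:aA}(i) together with the $\Delta_2$-condition established in Proposition \ref{ref} (a standard Orlicz-space Nemytskii continuity theorem; e.g. Krasnosel'skii–Rutickii). For injectivity, since $\tilde a$ is strictly increasing on $[0,\infty)$ and odd, $\tilde a(f) = \tilde a(h)$ a.e. forces $f = h$ a.e. For surjectivity, given any $\phi \in L^A(\Om)$, set $f := a(\phi)$ (the inverse of $\tilde a$, extended oddly); then $\tilde a(f) = \phi$, and by Lemma \ref{lem:tildeA:Atildea}(i) ($A \sim \tilde A \circ a$) we get $\int_\Om \tilde A(a(\phi))\,dx \sim \int_\Om A(\phi)\,dx < \infty$, so $f \in L^{\tilde A}(\Om)$. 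Thus $\Phi_1$ is a continuous bijection with explicit inverse $\Phi_1^{-1} = $ the Nemytskii map induced by $a$.

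It remains to show $\Phi_1^{-1}$ is continuous. The cleanest route is to repeat the above continuity argument with the roles of $a$ and $\tilde a$ (equivalently $A$ and $\tilde A$) interchanged: $a$ maps $L^A(\Om)$ into $L^{\tilde A}(\Om)$ by Lemma \ref{lem:tildeA:Atildea}(i), both $A$ and $\tilde A$ satisfy $\Delta_2$ by Proposition \ref{ref}, and $a$ satisfies the polynomial-times-logarithm growth bounds of \eqref{def:a:b} and Remark \ref{rem:A}, so the Orlicz-space Nemytskii continuity theorem applies again to give continuity of $\phi \mapsto a(\phi)$ from $L^A(\Om)$ to $L^{\tilde A}(\Om)$. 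Assembling the two factors, $\Phi = (\Phi_1, \Phi_2) : X \to X'$ is a continuous bijection whose inverse $(\Phi_1^{-1}, \Phi_2^{-1})$ is continuous, i.e. a homeomorphism.

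The main obstacle I anticipate is making the Nemytskii-operator continuity rigorous in the Orlicz setting: the classical theorem requires a two-sided growth estimate relating the inner $\mathcal N$-function to the target $\mathcal N$-function, and one must check this compatibility holds here — which it does precisely because Lemma \ref{lem:tilde:aA} pins down $\tilde a$ (and $a$) up to constants by an explicit $t^{1/p}(\ln(e+t))^{\alpha/p}$-type expression, and the $\Delta_2$-condition guarantees that modular convergence, norm convergence, and the equivalences of Lemma \ref{lem:tildeA:Atildea} all interact well. If one prefers to avoid quoting the general Nemytskii theorem, an alternative is a direct argument: given $f_n \to f$ in $L^{\tilde A}$, pass to a subsequence converging a.e. with an $L^{\tilde A}$-dominating function (the Orlicz analogue of the dominated-convergence extraction), use continuity of $\tilde a$ pointwise plus the growth bound to dominate $\tilde a(f_n)$ in $L^A$, and conclude by the $\Delta_2$-version of dominated convergence that $\tilde a(f_n) \to \tilde a(f)$ in $L^A$; a subsequence argument then upgrades this to convergence of the full sequence.
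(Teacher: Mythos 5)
Your proposal is correct and follows essentially the same route as the paper: the explicit inverse $(\psi_1,\psi_2)\mapsto(a(\psi_1),b(\psi_2))$ justified by Lemma \ref{lem:tildeA:Atildea}, injectivity from strict monotonicity of $\tilde a,\tilde b$, and continuity of the Nemytskii maps — indeed, the ``direct argument'' you sketch at the end (a.e.\ convergent subsequence with an Orlicz dominating function, pointwise continuity of $\tilde a$, dominated convergence in $A$-mean upgraded to norm convergence via $\Delta_2$, and a contradiction/subsequence step) is precisely the paper's proof of its continuity claim. The only cosmetic difference is that you explicitly record the continuity of the inverse by the symmetric argument, which the paper leaves implicit.
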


\begin{proof}

To prove that $\Phi:X\to X'$ is an homeomorphism, we use Proposition \ref{proLH}{\rm (iii)}-{\rm (iv)}, Remark \ref{tildeh:h-1},  and Lemma \ref{lem:tilde:aA} and Example \ref{example1}.\\

{\it Step 1. $\Phi$ is injective.}

\smallskip 

Assume that $\Phi (f_1,g_1)= \Phi (f_2,g_2)$, so for any $(\psi_1,\psi_2)\in X$
\begin{equation*}
0=\big(\Phi (f_1,g_1)- \Phi (f_2,g_2)\big) [\psi_1,\psi_2]=\int\limits_\Om \big[\tilde{a}(f_1)-\tilde{a}(f_2)\big]\psi_1 \, dx +\int\limits_\Om\big[\tilde{b}(g_1)-\tilde{b}(g_2)\big]\psi_2 \, dx,
\end{equation*}
then 
$$
\tilde{a}(f_1)=\tilde{a}(f_2),\qquad \tilde{b}(g_1)=\tilde{b}(g_2)\qq{a.e.}x\in\Om.
$$
or equivalently, since $\tilde{a}:=a^{-1}$ and $\tilde{b}:=b^{-1}$, 
$$
f_1=f_2,\qquad g_1=g_2\qq{a.e.}x\in\Om.
$$

{\it Step 2.  $\Phi$ is surjective.} 

\smallskip 

Let us check that for any $(\psi_1,\psi_2)\in X'=L^{ A}(\Om)\times L^{ B}(\Om)$ (see 
\eqref{dual:X}), $\big(a(\psi_1), b(\psi_2)\big)\in X,$ and
\begin{equation}\label{Phi:surj}
\Phi\big(a(\psi_1), b(\psi_2)\big)=(\psi_1,\psi_2).
\end{equation}
Since Lemma \ref{lem:tildeA:Atildea}, $a(\psi_1)\in L^{\tilde A}(\Om)$. Likewise $b(\psi_2)\in L^{\tilde B}(\Om),$ and $\big(a(\psi_1), b(\psi_2)\big)\in X.$

\smallskip

Since $\tilde{a}:=a^{-1}$ and $\tilde{b}:=b^{-1}$, it is straightforward to check that \eqref{Phi:surj} holds.\\

Consequently,
$$
\Phi^{-1}(\psi_1,\psi_2)= \big(a(\psi_1), b(\psi_2)\big). 
$$

{\it Step 3. $\Phi$ is continuous.}

\smallskip 

Let $(f_n,g_n)\to (f,g)$ be a  convergent sequence in $L^{\tilde A} (\Om)\times L^{\tilde B} (\Om)$, in other words
\begin{equation*}
\|f_n-f\|_{\tilde A}+\|g_n-g\|_{\tilde B}\to 0.
\end{equation*} 
\noindent {\it Claim.}
\begin{equation*}
\|f_n-f\|_{\tilde A}\to 0 \implies
\big\|\tilde{a}(f_n)-\tilde{a}(f)\big\|_{A}\to 0. 
\end{equation*}
Once proved the claim 
$$
\big\|\tilde{a}(f_n)-\tilde{a}(f)\big\|_{A}
+  \big\|\tilde{b}(g_n)-\tilde{b}(g)\big\|_{B}\to 0. 
$$
Then, since Proposition \ref{proLH}(i)-{\rm (ii)}, and Remark \ref{remLH}, for all $(\psi_1,\psi_2)\in X,$  
\begin{align*}
\Big|\big(\Phi (f_n,g_n)- \Phi (f,g)\big) [\psi_1,\psi_2]\Big|&=\left|\int\limits_\Om \big[\tilde{a}(f_n)-\tilde{a}(f)\big]\psi_1 \, dx +\int\limits_\Om\big[\tilde{b}(g_n)-\tilde{b}(g)\big]\psi_2 \, dx\right| \\ 
&\le  
\big\|\tilde{a}(f_n)-\tilde{a}(f)\big\|_{A} \|\psi_1\|_{\tilde{A}}
+ \big\|\tilde{b}(g_n)-\tilde{b}(g)\big\|_{B} \|\psi_2\|_{\tilde{B}}
\to 0,
\end{align*}
and the continuity is achieved.\\

\noindent		{\it Proof of the Claim.} 

Since Theorem \ref{th:conv:mean=norm}, to conclude the claim, we need to check  that $\tilde{a}(f_n)\to \tilde{a}(f)$ in $A$-mean, in other words
$$
\int_\Om A\big(\big|\tilde{a}(f_n)-\tilde{a}(f)\big|\big)\, dx\to 0.
$$

On the one hand since Lemma \ref{lem:tildeA:Atildea},
$ \tilde{A}\sim A\circo \tilde{a}$ at infinity, and $L^{\tilde{A}}(\Om)$ is isomorphic to $L^{ A\circo\tilde{a}}(\Om)$,
so, for any $f\in L^{\tilde A} (\Om)$
$$
\int_\Om A\big(\tilde{a}(f)\big)\, dx\le C,\qq{and consequently}
\tilde{a}(f)\in L^{A}(\Om).
$$
Likewise, $\{\tilde{a}(f_n)\}\subset L^{A}(\Om)$.  

\smallskip

On the other hand
since Theorem \ref{th:conv:mean=norm}, $f_n\to f$ in ${\tilde A}$-mean, so
\begin{equation}
\label{e:c1}
\int_\Om {\tilde A}\big(f_n-f\big)\, dx=\int_\Om {\tilde A}\big(|f_n-f|\big)\, dx\to 0.
\end{equation}

Assume by contradiction that the claim do not hold.
Then, there exist $\varepsilon_0>0$  and a subsequence $\{f_{n_k}\}_k$ satisfying
\begin{equation}
\label{e:cc}
\int_\Om A\left(\tilde{a}(f_{n_k})-\tilde{a}(f)\right)dx>\varepsilon_0  \quad \mbox{ for all } k\in \N. \end{equation}

Put $z_k:=\widetilde{A}(|f_{n_k}-f|)$. Since \eqref{e:c1}, $z_k\to 0$ in $L^1(\Om)$, so  by the Lebesgue reverse dominated converge theorem, there exists a subsequence $\{z_{k_j}\}_j$ and there exists $h\in L^1(\Om)$ such that 
$$z_{k_j}\leq h ,\quad z_{k_j}\to 0 \hbox{ a.e.},\hbox{ and also }  \tilde{A}^{-1}z_{k_j}=f_{n_{k_j}}-f\to 0 \hbox{ a.e.} $$
We have 
$$
|f_{n_{k_j}}|\leq \widetilde{A}^{-1}(h)+|f|\in L^{\widetilde{A}}(\Om).  
$$
By monotonicity of 
$\widetilde{a}$, and since Lemma \ref{lem:tildeA:Atildea},
$ \tilde{A}\sim A\circo \tilde{a}$ at infinity, so
$$
\widetilde{a} (f_{n_{k_j}})\leq  \widetilde{a}\left( \widetilde{A}^{-1}(h)+|f|\right):=\ell\in L^A(\Om),
$$
moreover
$$
A\left(|\widetilde{a} (f_{n_{k_j}})-\widetilde{a} (f)|\right)\leq A(\ell+\widetilde{a}(f))\in L^1(\Om).
$$
Since pointwise convergence
$A\left(|\widetilde{a} (f_{n_{k_j}})-\widetilde{a} (f)|\right) \to 0$ a.e., and from Lebesgue
dominate convergence theorem we conclude that
$$\int_\Om A\left(|\widetilde{a} (f_{n_{k_j}})-\widetilde{a} (f)|\right)dx \to 0,$$
which contradits \eqref{e:cc}. This concludes the proof of the claim.
\end{proof}

\section{Proof of Theorem 1.1}
\label{sec:proof}
First we star by proving that $J$ satisfies the $(PS)$ condition.
To this aim we will use  of the following (trivial) lemma: 
\begin{lem}[\cite{Dos-S}, Lemma 3.1] Let $X$ be a reflexive Banach space and $F\in C^1(X)$ be such that
\begin{enumerate}
\item[\rm (1)] any Palais-Smale sequence of $F$ is bounded;
\item[\rm (2)]  for all $u\in X$,
$$F'(u)= L(u)-T(u),$$
where $L: X\to  X'  $ is a homeomorphism and $T: X\to X'$ is a continuous compact map.
\end{enumerate} Then, $F$ satisfies the Palais-Smale condition.
\end{lem}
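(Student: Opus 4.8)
The plan is to run the classical Palais--Smale argument in the dual space, exploiting the splitting $F'=L-T$ to convert the compactness of $T$ into strong convergence of the sequence itself via the continuity of the inverse of the homeomorphism $L$.

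First I would fix an arbitrary Palais--Smale sequence $(u_n)\subset X$, that is, $\big(F(u_n)\big)$ bounded in $\R$ and $F'(u_n)\to 0$ in $X'$. By hypothesis (1) the sequence $(u_n)$ is bounded in $X$. Since $T:X\to X'$ is a compact map, it sends the bounded set $\{u_n\}$ into a relatively compact subset of $X'$; hence, after passing to a subsequence (not relabelled), $T(u_n)\to w$ strongly in $X'$ for some $w\in X'$. (Reflexivity of $X$ moreover yields, along a further subsequence, $u_n\rightharpoonup u$ weakly in $X$, which combined with continuity of $T$ is the usual way to identify $w=T(u)$; but this identification is not needed for the conclusion.)

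Next I would use the decomposition: from $L(u_n)=F'(u_n)+T(u_n)$ together with $F'(u_n)\to 0$ and $T(u_n)\to w$ in $X'$, it follows that $L(u_n)\to w$ in $X'$. Since $L:X\to X'$ is a homeomorphism, the inverse $L^{-1}:X'\to X$ is continuous, so $u_n=L^{-1}\big(L(u_n)\big)\to L^{-1}(w)$ strongly in $X$. Thus every Palais--Smale sequence admits a strongly convergent subsequence, which is precisely the $(PS)$ condition for $F$.

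There is essentially no serious obstacle in this lemma; the only points that require a little care are the precise meaning of ``continuous compact map'' (so that boundedness of $(u_n)$ already produces a strongly convergent subsequence of $\big(T(u_n)\big)$) and the bookkeeping of the successive subsequence extractions — neither causes any real difficulty. The substantive work in proving Theorem \ref{main} for problem \eqref{s} lies elsewhere, namely in checking hypotheses (1) and (2) of this lemma for the functional $J$, which is where the boundedness of its Palais--Smale sequences together with Propositions \ref{claim1} and \ref{claim2} enter.
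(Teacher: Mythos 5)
Your argument is correct and is exactly the standard proof of this lemma: boundedness from (1), compactness of $T$ to extract $T(u_{n_k})\to w$ in $X'$, then $L(u_{n_k})=F'(u_{n_k})+T(u_{n_k})\to w$ and continuity of $L^{-1}$ to conclude $u_{n_k}\to L^{-1}(w)$ in $X$. The paper does not reprove this lemma (it only cites \cite{Dos-S}, Lemma 3.1, calling it trivial), and your write-up coincides with the intended argument, including the correct observation that the weak limit identification $w=T(u)$ is not needed.
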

\begin{pro}
$J$ satisfies the $(PS)$ condition.
\end{pro}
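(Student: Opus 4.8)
The plan is to invoke the Lemma of \cite{Dos-S} quoted just above. Its hypothesis~(2) is already in our hands: $X$ is a reflexive Banach space by Proposition~\ref{ref}, $J\in C^1(X)$ with $J'=\Phi-\Upsilon$, where $\Phi:X\to X'$ is a homeomorphism (Proposition~\ref{claim1}) and $\Upsilon:X\to X'$ is linear, continuous and compact (Proposition~\ref{claim2}). Hence the whole work is to verify hypothesis~(1): every Palais--Smale sequence of $J$ is bounded in $X$. So let $(f_n,g_n)\subset X$ satisfy $J(f_n,g_n)\to c$ and $\varepsilon_n:=\|J'(f_n,g_n)\|_{X'}\to 0$.

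The core of the argument is a good choice of two multipliers. Since $\frac1{p+1}+\frac1{q+1}\le\frac{N-2}N<1$ we have $\frac1{q+1}<\frac p{p+1}$, so we may fix $\theta_1,\theta_2>0$ with $\frac1{\theta_1}+\frac1{\theta_2}=1$, $\frac1{\theta_1}<\frac p{p+1}$ and $\frac1{\theta_2}<\frac q{q+1}$ (take $\frac1{\theta_1}\in\big(\frac1{q+1},\frac p{p+1}\big)$ and $\frac1{\theta_2}=1-\frac1{\theta_1}$). Testing $J'(f_n,g_n)$ against $(f_n,0)$ and against $(0,g_n)$ and using $\int_\Om f\,\mathtt K(g)\,dx=\int_\Om g\,\mathtt K(f)\,dx$ from \eqref{K:na}, the bilinear part of $J$ is cancelled precisely because $\frac1{\theta_1}+\frac1{\theta_2}=1$, and
\[
J(f_n,g_n)-\tfrac1{\theta_1}J'(f_n,g_n)[f_n,0]-\tfrac1{\theta_2}J'(f_n,g_n)[0,g_n]
=\int_\Om\!\Big(\widetilde A(f_n)-\tfrac1{\theta_1}\tilde a(f_n)f_n\Big)dx+\int_\Om\!\Big(\widetilde B(g_n)-\tfrac1{\theta_2}\tilde b(g_n)g_n\Big)dx .
\]

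Next I would control the two modulars. By the limits computed in the proof of Proposition~\ref{ref}, $\frac{t\tilde a(t)}{\widetilde A(t)}\to\frac{p+1}p$ and $\frac{t\tilde b(t)}{\widetilde B(t)}\to\frac{q+1}q$ as $t\to+\infty$; since $\tilde a,\tilde b$ are odd, $\widetilde A,\widetilde B$ even, and $t\mapsto t\tilde a(t)$, $t\mapsto t\tilde b(t)$ are increasing, for $\delta>0$ small there is a constant with $\tilde a(f_n)f_n\le(\frac{p+1}p+\delta)\widetilde A(f_n)+C$ and $\tilde b(g_n)g_n\le(\frac{q+1}q+\delta)\widetilde B(g_n)+C$ pointwise; choosing $\delta$ so that $\mu_1:=1-\frac1{\theta_1}(\frac{p+1}p+\delta)>0$ and $\mu_2:=1-\frac1{\theta_2}(\frac{q+1}q+\delta)>0$ (possible because $\frac1{\theta_1}<\frac p{p+1}$, $\frac1{\theta_2}<\frac q{q+1}$), the right-hand side of the displayed identity is $\ge\mu_1\int_\Om\widetilde A(f_n)\,dx+\mu_2\int_\Om\widetilde B(g_n)\,dx-C$. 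On the other hand its left-hand side is $\le |c|+1+\frac{\varepsilon_n}{\theta_1}\|f_n\|_{\tilde A}+\frac{\varepsilon_n}{\theta_2}\|g_n\|_{\tilde B}$ for $n$ large. Finally, convexity of $\widetilde A$ together with $\int_\Om\widetilde A\big(f/\|f\|_{(\tilde A)}\big)\,dx=1$ (Lemma~\ref{Ban}) gives $\int_\Om\widetilde A(f)\,dx\ge\|f\|_{(\tilde A)}$ when $\|f\|_{(\tilde A)}\ge1$, hence $\int_\Om\widetilde A(f_n)\,dx\ge\|f_n\|_{(\tilde A)}-1$ in all cases, and similarly for $\widetilde B$; using the equivalence $\|\cdot\|_{\tilde A}\le 2\|\cdot\|_{(\tilde A)}$ (and likewise for $\widetilde B$), all these inequalities combine into
\[
\mu_1\|f_n\|_{(\tilde A)}+\mu_2\|g_n\|_{(\tilde B)}\le C_1+\tfrac{2\varepsilon_n}{\theta_1}\|f_n\|_{(\tilde A)}+\tfrac{2\varepsilon_n}{\theta_2}\|g_n\|_{(\tilde B)}
\]
for some $C_1$ independent of $n$; since $\varepsilon_n\to0$ the last two terms are absorbed for $n$ large, whence $(f_n,g_n)$ is bounded in $X$. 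Hypothesis~(1) holds, and the Lemma of \cite{Dos-S} yields the $(PS)$ condition.

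The main obstacle is exactly this boundedness step, and inside it the simultaneous constraints on $(\theta_1,\theta_2)$: after removing the bilinear part one needs a strictly positive coefficient in front of both $\int_\Om\widetilde A(f_n)$ and $\int_\Om\widetilde B(g_n)$, which is available only because $(p,q)$ lies on or below the critical hyperbola, i.e.\ $\frac p{p+1}+\frac q{q+1}>1$; once this is secured the non-power logarithmic weights cause no trouble, being handled through the asymptotics $\frac{t\tilde a(t)}{\widetilde A(t)}\to\frac{p+1}p$ and $\frac{t\tilde b(t)}{\widetilde B(t)}\to\frac{q+1}q$ recorded in Proposition~\ref{ref}, and through the elementary modular/norm comparison above.
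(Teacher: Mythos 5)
Your proposal is correct and follows essentially the same route as the paper: verify hypothesis (2) of the quoted lemma via Propositions \ref{ref}, \ref{claim1} and \ref{claim2}, then obtain boundedness of Palais--Smale sequences by subtracting a convex combination of $J'(f_n,g_n)$ tested against $(f_n,0)$ and $(0,g_n)$ (your pair $\frac1{\theta_1},\frac1{\theta_2}$ with $\frac1{\theta_1}+\frac1{\theta_2}=1$ is exactly the paper's single parameter $\te$, $1-\te$), the bilinear term cancelling by \eqref{K:na} and positivity of the remaining coefficients resting on $\frac1{p+1}+\frac1{q+1}<1$. The only cosmetic differences are that you control $t\tilde a(t)$ by $\big(\frac{p+1}{p}+\delta\big)\widetilde A(t)+C$ using the limit recorded in Proposition \ref{ref}, where the paper performs an explicit integration by parts on $A$ and estimates the logarithmic remainder, and that you conclude by direct absorption rather than by contradiction.
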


\begin{proof}
Let us first check  (2)   for the space $X=L^{\widetilde{A}}(\Om)\times L^{\widetilde{B}}(\Om)$ and the functional $F=J$ defined \eqref{phi}.  Observe that
$X$
is a reflexive Banach space by Proposition \ref{ref} and  $J\in C^1(X;\R)$
is such that
$J'=\Phi-\Upsilon $
where
$\Phi$  is defined in \eqref{defPhi}
and $\Upsilon$ is defined in \eqref{defUpsilon}.
Thus, (2) follows from  Proposition \ref{ref} and Proposition  \ref{claim2}.
\medskip

Now we check (1). 
Let $\{(f_n,g_n)\}_{n\in\N}$ be a Palais-Smale sequence for the functional $J$, that is, we have for some $C>0$ and some sequence of positive  real numbers
$\{\varepsilon_n\}_{n\in\N}\to 0$,\\

(PS1):$\quad \quad\  J(f_n,g_n)\leq C \hbox{ for all  } n\in\N,$\\

(PS2): $\quad \quad  \left|J' (f_n, g_n)[\psi_1, \psi_2]\right|\leq \varepsilon_n \|(\psi_1, \psi_2)\|_X \qquad \forall (\psi_1, \psi_2)\in X. $

\bigskip

Since  \eqref{K:na}, for every $(f,g)\in X$,  $\te\in \R$ we have
$$
\frac12\Upsilon(f,g)[f,g]= \frac12\left[\int\limits_\Om f\,\mathtt K (g)+\int\limits_\Om g\,\mathtt K (f) \, dx \right]=
\Upsilon(f,g)\left[\te f,(1-\te)g\right],
$$
and since the identity
$$
J (f_n,g_n)-J' (f_n,g_n) \left[\te f,(1-\te)g\right]=
\int_\Om \big(\tilde{A}(f_n)+\tilde{B}(g_n)-\te f_n \tilde a(f_n)-(1-\te)g_n \tilde b(g_n)\big)\,dx
$$
it follows from $(PS1)$-$(PS2)$, 
\begin{equation*}
\int_\Om \left(\tilde{A}(f_n)-\te f_n \tilde a(f_n)\right)dx +\int_\Om \left(\tilde{B}(g_n)-(1-\te)g_n \tilde b(g_n)\right)\,dx \leq \varepsilon_n \|(f_n,g_n)\|_X +2C.
\end{equation*}
Assume by contradiction that 
$\|(f_n,g_n)\|_X\rightarrow +\infty$.
Using the Young equality, see Proposition \ref{pro:H*} {\rm (iii)},
\begin{equation}\label{ye}
\widetilde{A}(t)= t\tilde{a}(t)-A\big(\tilde{a}(t)\big) 
\end{equation}
Integrating by parts $A(t)=\int_0^t \frac{s^p}{(\ln (e+s))^\alpha}\, ds,$  we have that
\begin{equation*}
\displaystyle A(t)=\frac{t\,a(t)}{p+1}+\frac{\alpha}{p+1}\int^t_0 \frac{s^{p+1}}{\big(\ln(e+s)\big)^{\alpha+1}}\,\frac{ds}{e+s},
\end{equation*}
and consequently,
\begin{equation*}
\displaystyle A\big(\tilde{a}(t)\big)=\frac{1}{p+1}\, t\,\tilde{a}(t)+\frac{\alpha}{p+1}\int^{\tilde{a}(t)}_0 \frac{s^{p+1}}{\big(\ln(e+s)\big)^{\alpha+1}}\,\frac{ds}{e+s}.
\end{equation*}
Likewise 
\begin{equation*}
B(t)=\frac{1}{q+1}\, t\,b(t)+\frac{\beta}{q+1}\int^t_0 \frac{s^{q+1}}{\big(\ln(e+s)\big)^{\beta+1}}\,\frac{ds}{e+s},
\end{equation*}
and 
\begin{equation*}
\displaystyle B\big(\tilde{b}(t)\big)=\frac{1}{q+1}\, t\,\tilde{b}(t)+\frac{\be}{q+1}\int^{\tilde{b}(t)}_0 \frac{s^{q+1}}{\big(\ln(e+s)\big)^{\be+1}}\,\frac{ds}{e+s}.
\end{equation*}
Hence
\begin{align*}
\widetilde{A}(t)-\te t\tilde{a}(t)
&=\left(\frac{p}{p+1}-\te\right)\, t\,\tilde{a}(t)-\frac{\alpha}{p+1}\int^{\tilde{a}(t)}_0 \frac{s^{p+1}}{\big(\ln(e+s)\big)^{\alpha+1}}\,\frac{ds}{e+s},
\end{align*}
and
\begin{align*}
\widetilde{B}(t)-(1-\te)t\,\tilde{b}(t) 
&=\left(\frac{q}{q+1}-(1-\te)\right)\, t\,\tilde{b}(t)-\frac{\be}{q+1}\int^{\tilde{b}(t)}_0 \frac{s^{q+1}}{\big(\ln(e+s)\big)^{\be+1}}\,\frac{ds}{e+s}.
\end{align*}
Since l'Hôpital rule,
\begin{equation*}
\lim_{t\to\infty}\frac{\D\int^t_0 \frac{s^{p+1}}{\big(\ln(e+s)\big)^{\alpha+1}}\,\frac{ds}{e+s}}{\D\frac{t^{p+1}}{\big(\ln(e+t)\big)^{\alpha+1}}}=\frac{1}{p+1},
\qq{moreover} \lim_{t\to\infty}\frac{\D\frac{t^{p+1}}{(\ln(e+t))^{\alpha+1}}}{ta(t)}=0,     
\end{equation*}
then, for each $\e>0$ there exists a $C_\e$ such that
\begin{equation*}
\frac{\alpha}{p+1}\int^t_0 \frac{s^{p+1}}{\big(\ln(e+s)\big)^{\alpha+1}}\,\frac{ds}{e+s}
\le \e\,ta(t)+C_\e,\qq{for all} t\ge 0,
\end{equation*}
and
\begin{equation*}
\int^{\tilde{a}(t)}_0 \frac{s^{p+1}}{\big(\ln(e+s)\big)^{\alpha+1}}\,\frac{ds}{e+s}
\le \e\,t\tilde{a}(t)+C_\e,\qq{for all} t\ge 0.
\end{equation*}
Choosing
$$
\te=\frac12\left(1-\frac{1}{p+1}+\frac{1}{q+1}\right)=\frac12\left(\frac{p}{p+1}+\frac{1}{q+1}\right)\in[0,1],
$$
we get that
$$
\frac{p}{p+1}-\te=\frac{q}{q+1}-(1-\te)=\frac12\left(1-\frac{1}{p+1}-\frac{1}{q+1}\right)>0.
$$
Consequently
\begin{align*}
\widetilde{A}(t)-\te t\tilde{a}(t)
&\ge \left[\frac12\left(1-\frac{1}{p+1}-\frac{1}{q+1}\right)-\e\right]\, t\,\tilde{a}(t)-C_e,
\end{align*}
and
\begin{align*}
\widetilde{B}(t)-(1-\te)t\,\tilde{b}(t) 
&\ge \left[\frac12\left(1-\frac{1}{p+1}-\frac{1}{q+1}\right)-\e\right]\, t\,\tilde{b}(t)-C_e.
\end{align*}
Hence
\begin{align*}\label{AB}
&\int_\Om  f_n\,\tilde{a}(f_n) \,dx  + \int_\Om g_n\,\tilde{b}(g_n)
\,dx 
\leq \varepsilon_n \|(f_n,g_n)\|_X +C,
\end{align*}
and as $t\tilde{a}(t)>0$ for all $t$, we then get 
\begin{equation*}\label{AB:3}
\frac{1}{\|(f_n,g_n)\|_X}
\int_\Om  f_n\,\tilde{a}(f_n) \,dx\to 0 
,\qquad \frac{1}{\|(f_n,g_n)\|_X}\, \int_\Om g_n\,\tilde{b}(g_n)
\,dx \to 0.
\end{equation*}
Thus, using that for all $u\in L^{H}(\Om)$ 
$$
\|u\|_{H}\leq 2\|u\|_{(H)}\leq 2\max\left\{ \int_\Om H (u)\,dx, 1\right\}
$$
(cf. Lemma \ref{Ban}{\rm (iii)} and Proposition \ref{proLH}(i)) for  $H=\tilde{A}$ and  $H=\tilde{B}$,
we reach a contradiction.
\end{proof}

\noindent{\it Proof of Theorem \ref{main}}. We wish to apply a mountain pass type  theorem. 
We divide the proof in 2 steps.
Recall that the functional $J$ is defined in \eqref{phi}.\medskip

\noindent {\it Step 1}. We prove that  $(0,0)$ is a local minimum of $J $.   By Young's inequality for both $A$ and $B$ implies 
\begin{align*}J (f,g)=&\int\limits_\Om \widetilde A(f)\, dx +\int\limits_\Om\widetilde B(g)\, dx -\frac{1}{2}\int\limits_\Om \left(f\,\mathtt K (g)+g\,\mathtt K (f)\right)\, dx\\
\geq& \int\limits_\Om \widetilde A(f)\, dx +\int\limits_\Om\widetilde B(g)\, dx-\frac{1}{2} \int_\Om \left(\widetilde{A}(f) + A(\mathtt K (g))\right) \, dx 
-\frac{1}{2}\int_\Om \left(\widetilde{B}(g) + B(\mathtt K (f))\right) \, dx \\
=& \frac{1}{2}\int\limits_\Om \widetilde A(f)\, dx +\frac{1}{2}\int\limits_\Om\widetilde B(g)\, dx
-\frac{1}{2}\int\limits_\Om A (\mathtt K (g))\, dx- \frac{1}{2} \int_\Om  B(\mathtt  K (f)) \, dx.
\end{align*}
Let us distinguish 3 cases according to the signs of $\alpha$ and $\beta$.
\medskip

\noindent {\it Case 1: Assume that  $\alpha\geq 0$ and $\beta\geq 0$.}
First observe  that, since  
$$\widetilde A (t)\geq \frac{p}{p+1}t^{\frac{p+1}{p}}, \quad  \widetilde B (t)\geq \frac{q}{q+1}t^{\frac{q+1}{q}}  
$$
for all $t\geq 0$ (see \eqref{ineq}), then $X\subset L^{\frac{p+1}{p}}(\Om)\times L^{\frac{q+1}{q}}(\Om) $. Moreover,
using  that 
$$ 
A(t)\leq \frac{1}{p+1}t^{p+1},\quad B(t)\leq \frac{1}{q+1}t^{q+1}
$$ 
for all $t\geq 0$, we get
$$ 
\int_\Om A(\mathtt K(g))dx\leq \frac{1}{p+1}\|\mathtt K (g)\|_{p+1}^{p+1}, \quad  \int_\Om B(\mathtt K(f))dx\leq \frac{1}{q+1}\|\mathtt K (f)\|_{q+1}^{q+1} .
$$
Besides, using Sobolev embeddings, either the second inequality of \eqref{h:sub} or the equality in \eqref{h} and the regularity of $\mathtt K$, 
there exists  a constant $C>0$ independent of $f$ and $g$ such that 
$$
\|\mathtt K (g)\|_{p+1}\leq C\| \mathtt K( g)\|_{W^{2, \frac{q+1}{q}}}\leq C\|g\|_{\frac{q+1}{q}}, \quad \|\mathtt K (f)\|_{q+1}\leq C\| \mathtt K (f)\|_{W^{2, \frac{p+1}{p}}}\leq C\|f\|_{\frac{p+1}{p}}.
$$
Thus
$$ 2J(f,g)\geq \bigg(\frac{p}{p+1} \|f\|_{\frac{p+1}{p}}^{\frac{p+1}{p}} - C^{q+1}\|f\|_{\frac{p+1}{p}}^{q+1}\bigg)+ \bigg(\frac{q}{q+1} \|g\|_{\frac{q+1}{q}}^{\frac{q+1}{q}} - C^{p+1}\|g\|_{\frac{q+1}{q}}^{p+1}\bigg).  $$
Finally, using the first inequality of \eqref{h:sub}  and \eqref{h} (i.e. $\frac{p+1}{p}<q+1$, or equivalently, $\frac{q+1}{q}<p+1$) and  that
$\|f\|_{\frac{p+1}{p}}\leq \left(\frac{p+1}{p}\right)^\frac{p}{p+1}\|f\|_{\widetilde A},\quad \|g\|_{\frac{q+1}{q}}\leq \left(\frac{q+1}{q}\right)^\frac{q}{q+1}\|g\|_{\widetilde B}$
,  we conclude that 
$J(f,g)> 0
$
for all $(f,g)\in X$ with $0\not=\|(f,g)\|_X$ sufficiently small.

\bigskip

\noindent {\it Case 2: Assume that  $\alpha\ge 0$ and $\beta<0$ (the case $\alpha< 0$   and $\beta\ge 0$ is analogous).}

Using Sobolev embeddings,  the  regularity of $\mathtt K$, 
and since $q+1\le \left(\frac{N-2}{N}-\frac1{p +1}\right)^{-1} =\left(\frac p{p +1}-\frac{2}{N}\right)^{-1},$
we have   that 
$$
\|\mathtt K (f)\|_{q+1}\leq C \|\mathtt K (f)\|_{\left(\frac p{p +1}-\frac{2}{N}\right)^{-1}}\leq C\| \mathtt K (f)\|_{W^{2,\frac{p +1}p}}\leq C\|f\|_{\frac{p +1}p}\,\,.
$$

On the one hand, since \eqref{eq:A:p+1:>},
and the above
$$
\int_\Om  B (\mathtt K (f))\, dx\le \frac{1}{q+1}\,  \|\mathtt K (f)\|_{q+1}^{q +1}\le 
C\|f\|_{\frac{p +1}p}^{q +1} \,\,.
$$

On the other hand, since \eqref{eq:tilde:A:p+1}
$$
\int_\Om  {\widetilde A} (f)\, dx\ge C\,  \|f\|_{\frac{p +1}p}^{\,\frac{p +1}p}\,\,.
$$

Hence, using the Young inequality
$$
\frac{1}{2}\int\limits_\Om \left(f\,\mathtt K (g)+g\,\mathtt K (f)\right)\, dx
=\int\limits_\Om f\,\mathtt K (g)\, dx
\le \int_\Om \tilde B (g) dx + \int_\Om B(K(f)) dx 
$$ 
so 
$$
J(f,g)\geq \int_\Om \tilde A (f) dx -\int_\Om B(K(f))dx
$$ and using (2.41) 

$$J(f,g)\geq C\|f\|_{\frac{p +1}p}^{\frac{p+1}{p}} -D\| f\|_{\frac{p +1}p}^{q+1}> 0,
$$
for $0\not=\|f\|_{\tilde A}$ small enough. Observe that if $f=0$, then $J(f,g)=\int_\Om \tilde B (g) dx>0$ for $\|g\|_{\tilde B}\ne 0.$

\bigskip

\noindent {\it Case 3: Assume that  $\alpha< 0$   and $\beta< 0$.  }

Notice that if $\alpha<0,\, \beta <0$ then necessarilly \eqref{h:sub} holds.
Using Orlicz-Sobolev embeddings (see Lemma \ref{lem:comp:emb}), 
and the regularity of $\mathtt K$ (see Theorem \ref{regu}), we have   that 
$$
\|\mathtt K (g)\|_{A}\leq C\| \mathtt K( g)\|_{W^{2, \tilde B}}\leq C\|g\|_{\tilde B}, \quad 
\|\mathtt K (f)\|_{B}\leq C\| \mathtt K (f)\|_{W^{2,\tilde  A}}\leq C\|f\|_{\tilde  A}.
$$

On the one hand since \eqref{eq:A:<2}, 
and the above
$$
\int_\Om  A (\mathtt K (g))\, dx\le C\,  \|\mathtt K (g)\|_{A}^{p +1}\le 
C\|g\|_{\tilde B}^{p +1} ,\quad \int_\Om B(\mathtt K(f))dx\le C\|\mathtt K (f)\|_{B}^{q+1}\le C\|f\|_{\tilde  A}^{q+1}.
$$
whenever $\|\mathtt K (g)\|_{A}\le 1,\ \|\mathtt K (f)\|_{B}\le 1.$

On the other hand since \eqref{eq:tilde:A:>2} in Lemma \ref{lem:norm:tilde:A},  and Lemma \ref{lem:cont:emb},
$$
\int_\Om  {\widetilde A} (f)\, dx\ge C\,  \|f\|_{(\tilde A)}^{\,\frac{p +1}p},
\quad 
\int_\Om  {\widetilde B} (g)\, dx\ge C\,  \|g\|_{\tilde B}^{\frac{q +1}q},
$$
whenever $\|f\|_{(\tilde A)}\le 1,\ \|g\|_{\tilde B}\le 1.$

Then, for all $(f,g)\in X$ with $0\not=\|(f,g)\|_X$ sufficiently small
$$ 
2J(f,g)\geq d_1\|f\|_{(\tilde A)}^{\,\frac{p +1}p} - d_2\|f\|_{\tilde A}^{q+1}+ \bigg(d_3\|g\|_{\tilde B}^{\frac{q+1}{q}} - d_4\|g\|_{\tilde B}^{p+1}\bigg)>0. 
$$

\medskip
\noindent {\it Step 2}. There exists $(f_1,g_1)\in X$ such that  $J(f_1,g_1)<0$.
Indeed, choose $s\in  (1/p,q)$ (this is possible as $qp>1$, which follows from \eqref{h:sub} or \eqref{h}).
We shall show that there exists $t\in \R$ big enough such that  $J(t\phi_1,t^s\phi_1)<0$. Indeed,  since $\mathtt K (\phi_1)=\phi_1/\la_1,$ we can write
\begin{align*}
J (t\phi_1,t^s\phi_1)=&\int\limits_\Om \widetilde A(t\phi_1)\, dx  +\int\limits_\Om\widetilde B(t^s\phi_1)\, dx 
-\frac{t^{1+s}}{\la_1}\int\limits_\Om (\phi_1)^2\, dx.
\end{align*}
Since Lemma \ref{lem:tilde:aA} and Example \ref{example1}, $\widetilde A(t\phi_1)\sim {(t\phi_1)^{\frac{p +1}p}(\ln (t\phi_1))^\frac\alpha p}$, 
and $\widetilde B(t^s\phi_1)\sim {(t^s\phi_1)^{\frac{q+1}{q}}(\ln (t^s\phi_1))^\frac{\beta}{q}}$.
Since  $\frac{p+1}{p}<1+s$, and also  $s\frac{q+1}{q}<1+s$,
hence $J (t\phi_1,t^s\phi_1)\to -\infty$ as $t\to \infty.$

\section{A non-existence result}
\label{sec:crit}
The following lemma provides  a Rellich-Pohozaev-Mitidieri type identity, see \cite{Pohozaev, Pucci_Serrin, Mitidieri}. 

\begin{lem}\label{PRMidentity}(Rellich-Pohozaev-Mitidieri type identity)\\
Let $u$ and $v$ be in $C^2(\bar{\Omega})$, where $\Omega$ is a $C^1$ domain in $\mathbb{R}^N$,  and $u=v=0$ on $\partial{\Omega} $. Then

\begin{align*}
\displaystyle \int_{\Omega} \Delta u\; (x\cdot \nabla v)+ \Delta v\; (x\cdot \nabla u)&= (N-2)\int_{\Omega}( \nabla u\cdot  \nabla v) +\int_{\partial\Omega}\frac{\partial u}{\p \nu} \;(x\cdot \nabla v)\nonumber\\ 
& +  \int_{\partial\Omega}\frac{\partial v}{\p \nu}\;(x\cdot \nabla u) -\int_{\partial\Omega}(\nabla u,\nabla v)\; (x\cdot   \nu),
\end{align*}
where $\nu$ denotes the exterior normal, and $(x\cdot \nu)$ denotes the inner product.
\end{lem}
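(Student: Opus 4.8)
The plan is to obtain the identity by two successive applications of the divergence theorem, exactly along the lines of the classical Pohozaev argument, the only delicate point being the bookkeeping of the boundary terms. The hypotheses $u,v\in C^2(\overline\Om)$ and $\Om$ of class $C^1$ are precisely what is needed for these integrations by parts to be legitimate, since all the integrands occurring in the boundary integrals are continuous up to $\p\Om$. (Incidentally, the stated form of the identity does not actually require $u=v=0$ on $\p\Om$; that condition only serves to simplify the boundary terms further, as I note at the end.)

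First I would apply Green's first identity to each of the two volume terms on the left-hand side separately. With $\phi=x\cdot\na v$ this gives
\[
\int_\Om \De u\,(x\cdot\na v)\,dx= -\int_\Om \na u\cdot\na\bigl(x\cdot\na v\bigr)\,dx + \int_{\p\Om}\frac{\p u}{\p\nu}\,(x\cdot\na v)\,d\sigma,
\]
and symmetrically with the roles of $u$ and $v$ exchanged. The key elementary computation is that, differentiating $x\cdot\na v=\sum_k x_k\p_k v$ componentwise, one has $\na(x\cdot\na v)=\na v+(x\cdot\na)\na v$, where $(x\cdot\na)\na v$ denotes the vector with components $(x\cdot\na)\p_i v$. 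Using the product rule $(x\cdot\na)(fg)=f\,(x\cdot\na)g+g\,(x\cdot\na)f$ for $f=\p_i u$, $g=\p_i v$, the cross terms recombine and one gets
\[
\na u\cdot\na\bigl(x\cdot\na v\bigr)+\na v\cdot\na\bigl(x\cdot\na u\bigr)= 2\,\na u\cdot\na v + (x\cdot\na)(\na u\cdot\na v).
\]

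Next I would integrate the last term once more by parts: setting $w:=\na u\cdot\na v$ and writing $(x\cdot\na)w=\mathrm{div}(wx)-(\mathrm{div}\,x)\,w=\mathrm{div}(wx)-Nw$, the divergence theorem yields
\[
\int_\Om (x\cdot\na)w\,dx= \int_{\p\Om} w\,(x\cdot\nu)\,d\sigma - N\int_\Om w\,dx.
\]
Substituting everything back, the interior contributions collapse to $-2\int_\Om\na u\cdot\na v\,dx+N\int_\Om\na u\cdot\na v\,dx=(N-2)\int_\Om\na u\cdot\na v\,dx$, while the surviving boundary terms are exactly $\int_{\p\Om}\frac{\p u}{\p\nu}(x\cdot\na v)\,d\sigma+\int_{\p\Om}\frac{\p v}{\p\nu}(x\cdot\na u)\,d\sigma-\int_{\p\Om}(\na u,\na v)(x\cdot\nu)\,d\sigma$, which is the claimed identity.

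I do not expect any genuine obstacle here; the proof is a routine (if careful) computation, and the only matters requiring attention are the justification of the two divergence-theorem applications under the stated regularity and the sign bookkeeping when assembling the three boundary integrals. If one additionally wishes to use the boundary condition $u=v=0$ on $\p\Om$, then on $\p\Om$ one has $\na u=\frac{\p u}{\p\nu}\nu$ and $\na v=\frac{\p v}{\p\nu}\nu$, so that $x\cdot\na u=\frac{\p u}{\p\nu}(x\cdot\nu)$, $x\cdot\na v=\frac{\p v}{\p\nu}(x\cdot\nu)$ and $(\na u,\na v)=\frac{\p u}{\p\nu}\frac{\p v}{\p\nu}$, and the three boundary terms collapse to the single integral $\int_{\p\Om}\frac{\p u}{\p\nu}\frac{\p v}{\p\nu}(x\cdot\nu)\,d\sigma$, which is the form usually exploited in Pohozaev-type nonexistence arguments.
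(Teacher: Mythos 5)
Your proof is correct: the two applications of the divergence theorem, the identity $\nabla(x\cdot\nabla v)=\nabla v+(x\cdot\nabla)\nabla v$, the recombination via the product rule into $(x\cdot\nabla)(\nabla u\cdot\nabla v)$, and the final bookkeeping all check out, and your observation that $u=v=0$ on $\partial\Omega$ is not needed for the identity itself (only for collapsing the boundary terms afterwards) is also accurate. The paper does not prove this lemma at all — it simply refers to Mitidieri — and your argument is the standard integration-by-parts derivation one finds there, so there is nothing to compare beyond noting that you have supplied the omitted computation in full.
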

For the proof we refer to  \cite{Mitidieri}.

By definition, a solution $(u,v)$ of \eqref{s} belongs to $\big(W_0^{1, {\widetilde A}}(\Om)\cap W^{2,{\tilde A}}(\Om)\big)\times\big( W_0^{1, {\widetilde B}}(\Om)\cap W^{2,{\tilde B}}(\Om)\big)$. Since an estimate of Brezis-Kato \cite{Brezis_Kato}, based on Moser iteration techniques \cite{Moser}, $u,v\in C^{1, \nu}(\Omb)\cap W^{2,s}(\Om)$ for any $\nu<1,\ s<+\infty$, see for instance \cite[Lemma 2.1 (iv)]{Pardo_JFPTA}.
If, in particular $\Om$ is $C^{2,\mu}$, then $u,v\in C^{2, \mu}(\Omb).$
\bigskip

\begin{thm}
[\bf Non existence of non-negative non-trivial classical solutions]\label{th:non:ex}
Let $(u,v) \in \big(C^2(\Omb)\big)^2$ be a pair of non-negative  solutions to the problem \eqref{s}. 

Assume that 
\begin{equation}\label{h:crit}
\frac1{p+1}+\frac1{q+1}=\frac{N-2}N.
\end{equation}
Assume also that $\Om$ is strongly star-shaped  with respect to $0$ (the inner product $(x\cdot \nu(x))\ge 0$ for all $x\in \p\Om$, and $(x\cdot \nu(x))\not\equiv 0$) and that $\p\Om$ is $C^1$. 
If $\al,\be\le 0$, then
$$
u\equiv 0,\quad  v\equiv 0,\qq{in}\Om. 
$$

Assume now that 
\begin{equation}\label{h:crit:2}
\frac1{p+1}+\frac1{q+1}<\frac{N-2}N.
\end{equation}
and that $\Om$ is star-shaped  with respect to $0$ ($(x\cdot \nu(x))\ge 0$ for all $x\in \p\Om$) and that $\p\Om$ is $C^1$. 
If $\al,\be\le 0$, then
$$
u\equiv 0,\quad  v\equiv 0,\qq{in}\Om. 
$$

\end{thm}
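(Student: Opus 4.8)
The plan is to combine the Rellich--Pohozaev--Mitidieri identity of Lemma \ref{PRMidentity} with the two integral identities obtained by testing the equations of \eqref{s} against $u$ and against $v$. Keep the notation $a,b,A,B$ of \eqref{def:a:b}--\eqref{def:A:B}, so $-\Delta u=b(v)$, $-\Delta v=a(u)$ and $a(0)=b(0)=0$. First I would plug $u,v\in C^2(\Omb)$ into Lemma \ref{PRMidentity}: since $u=v=0$ on $\p\Om$ we have $\nabla u=u_\nu\nu$ and $\nabla v=v_\nu\nu$ there, so the three boundary integrals collapse to a single $\int_{\p\Om}(x\cdot\nu)\,u_\nu v_\nu$; on the left, replacing $a(u)(x\cdot\nabla u)$ by $x\cdot\nabla A(u)$ and $b(v)(x\cdot\nabla v)$ by $x\cdot\nabla B(v)$ and integrating by parts (the boundary term vanishing as $A(0)=B(0)=0$), the left-hand side becomes $N\int_\Om(A(u)+B(v))$. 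This yields
\[
N\int_\Om\big(A(u)+B(v)\big)=(N-2)\int_\Om\nabla u\cdot\nabla v+\int_{\p\Om}(x\cdot\nu)\,u_\nu v_\nu .
\]
As $u,v\ge0$ vanish on $\p\Om$ and lie in $C^1(\Omb)$, $u_\nu\le0$ and $v_\nu\le0$ on $\p\Om$, hence the boundary term is nonnegative when $\Om$ is star-shaped. Testing $-\Delta v=a(u)$ with $u$ and $-\Delta u=b(v)$ with $v$ gives $\int_\Om\nabla u\cdot\nabla v=\int_\Om u\,a(u)=\int_\Om v\,b(v)$, so for every $\theta\in[0,1]$ the right-hand side equals $(N-2)\big(\theta\int_\Om u\,a(u)+(1-\theta)\int_\Om v\,b(v)\big)+\int_{\p\Om}(x\cdot\nu)u_\nu v_\nu$.

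The hypothesis $\alpha,\beta\le0$ enters through a one-variable estimate: integrating by parts in $A(t)=\int_0^t s^{p}\big(\ln(e+s)\big)^{-\alpha}\,ds$ (exactly as in the verification that $J$ satisfies $(PS)$) gives
\[
t\,a(t)-(p+1)A(t)=-\alpha\int_0^t\frac{s^{p+1}}{\big(\ln(e+s)\big)^{\alpha+1}}\,\frac{ds}{e+s}\ \ge\ 0,
\]
strictly for $t>0$ when $\alpha<0$; symmetrically $t\,b(t)-(q+1)B(t)\ge0$ when $\beta\le0$. Substituting the test-function identity into the Pohozaev identity and using these (legitimate since $(N-2)\theta,(N-2)(1-\theta)\ge0$) yields
\[
\big(N-(N-2)(p+1)\theta\big)\int_\Om A(u)+\big(N-(N-2)(q+1)(1-\theta)\big)\int_\Om B(v)\ \ge\ \int_{\p\Om}(x\cdot\nu)u_\nu v_\nu\ \ge\ 0 .
\]
Now I would choose $\theta$. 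Under \eqref{h:crit:2} the interval $\big(\tfrac{N}{(N-2)(p+1)},\,1-\tfrac{N}{(N-2)(q+1)}\big)$ is nonempty, and for $\theta$ in it both bracketed coefficients are strictly negative, forcing $\int_\Om A(u)=\int_\Om B(v)=0$ and hence $u\equiv v\equiv0$ (recall $A,B>0$ on $(0,\infty)$). Under \eqref{h:crit} the hyperbola equation says precisely that $\theta_0:=\tfrac{N}{(N-2)(p+1)}$ equals $1-\tfrac{N}{(N-2)(q+1)}$ and lies in $(0,1)$, and with $\theta=\theta_0$ both coefficients vanish; using $\theta_0$ in the \emph{equality} version of the identity and regrouping gives
\[
\tfrac{N}{p+1}\int_\Om\!\big(u\,a(u)-(p+1)A(u)\big)+\tfrac{N}{q+1}\int_\Om\!\big(v\,b(v)-(q+1)B(v)\big)+\int_{\p\Om}(x\cdot\nu)u_\nu v_\nu=0,
\]
so, all three terms being $\ge0$, each of them vanishes.

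It remains to finish the critical case. If $\alpha<0$, the vanishing of the first integral forces $u\equiv0$, whence $-\Delta v=a(0)=0$ and $v\equiv0$; symmetrically if $\beta<0$. In the remaining sub-case $\alpha=\beta=0$, only $\int_{\p\Om}(x\cdot\nu)u_\nu v_\nu=0$ survives. Here I would first apply the strong maximum principle to the superharmonic functions $u$ and $v$ ($-\Delta u=v^{q}\ge0$, $-\Delta v=u^{p}\ge0$): either one of them vanishes identically---and then, by the previous line, so does the other---or $u>0$ and $v>0$ in $\Om$. In the latter case the nonnegative continuous density $(x\cdot\nu)u_\nu v_\nu$ has zero integral, so it vanishes on $\p\Om$; hence $u_\nu v_\nu\equiv0$ on the nonempty relatively open set $\{x\in\p\Om:(x\cdot\nu)>0\}$ (nonempty because $\Om$ is \emph{strongly} star-shaped), and by the Baire category theorem one of $u_\nu,v_\nu$---say $u_\nu$---vanishes on a nonempty relatively open piece $U\subset\p\Om$. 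On $U$ both $u$ and $\nabla u$ vanish, so extending $u$ by $0$ across $U$ produces a nonnegative weak supersolution $\bar u$ of $-\Delta\bar u=\bar v^{\,q}\ge0$ on a small ball $B$ centered at a point of $U$, with $\bar u\equiv0$ on the nonempty open set $B\setminus\Omb$; the strong minimum principle gives $\bar u\equiv0$ on $B$, hence $u\equiv0$ on the open set $B\cap\Om$, and a final application of the strong minimum principle on the connected set $\Om$ gives $u\equiv0$, hence $v\equiv0$---contradicting $v>0$. (The subcritical statement needs only $(x\cdot\nu)\ge0$, since there triviality was obtained directly without the boundary term.)

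The step I expect to be the main obstacle is exactly this $\alpha=\beta=0$ critical sub-case: the identity delivers only $\int_{\p\Om}(x\cdot\nu)u_\nu v_\nu=0$, and since $\p\Om$ is merely $C^1$ there is no interior ball condition at hand, so Hopf's boundary lemma is unavailable; one must instead extract a boundary portion on which the Cauchy data of $u$ (or of $v$) vanish and run the extension-by-zero plus strong-maximum-principle argument above. Everything else (the reduction of the boundary terms in Lemma \ref{PRMidentity}, the integrations by parts, and the algebra selecting $\theta$) is routine once the master identity is in place.
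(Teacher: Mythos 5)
Your proposal is correct and rests on exactly the same machinery as the paper's proof: the Rellich--Pohozaev--Mitidieri identity, the reduction of the three boundary integrals to $\int_{\p\Om}(x\cdot\nu)\,u_\nu v_\nu$ via $\nabla u=u_\nu\nu$ on $\p\Om$, the identity $\int_\Om\nabla u\cdot\nabla v=\int_\Om u\,a(u)=\int_\Om v\,b(v)$, and the integration by parts $t\,a(t)-(p+1)A(t)=-\alpha\int_0^t\frac{s^{p+1}}{(\ln(e+s))^{\alpha+1}}\frac{ds}{e+s}\ge0$ for $\alpha\le0$ (the paper's \eqref{A}--\eqref{B}). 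Your $\theta$-averaging is an equivalent repackaging of the paper's direct substitution, which produces the single coefficient $N(\frac1{p+1}+\frac1{q+1})-(N-2)$ in front of $\int u\,a(u)$ in \eqref{PRM:4a}; your choice $\theta_0=\frac{N}{(N-2)(p+1)}$ reproduces exactly that grouping, and the subcritical case is handled identically. The one genuine divergence is the endgame of the critical case when $\alpha=\beta=0$: the paper disposes of all of $\alpha,\beta\le0$ at once by invoking Hopf's lemma to get $u_\nu<0$ and $v_\nu<0$ pointwise on $\p\Om$, so that strong star-shapedness makes the boundary integral strictly positive and the contradiction is immediate; you instead avoid Hopf (on the grounds that $\p\Om$ is only assumed $C^1$ in the statement) and run a Baire-category argument to find a relatively open boundary piece where the Cauchy data of $u$ or $v$ vanish, followed by extension by zero and the strong minimum principle. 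Your route is sound and buys robustness at low boundary regularity, but it is not needed under the paper's standing assumption that $\Om$ is of class $C^2$ (stated in the introduction, and used again just before the theorem to get $u,v\in C^{2,\mu}(\Omb)$), under which the interior ball condition holds and Hopf's lemma applies; the paper's shorter conclusion is therefore legitimate in its own setting.
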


\begin{rem}\label{rem}
In particular, for the single equation, i.e.  $p=q$ and $\al=\be\le 0$,   we have the following:

Assume that $p=2^*-1$, and also that $\Om$ is strongly star-shaped  with respect to $0$.
If $\al\le 0$, then
$$
u\equiv 0,\qq{in}\Om. 
$$

Assume now that 
$p>2^*-1$,
and that $\Om$ is star-shaped  with respect to $0$. 
If $\al\le 0$, then
$$
u\equiv 0,\qq{in}\Om. 
$$
\end{rem}
\begin{proof}[Proof of Theorem \ref{th:non:ex}]

If we set $W(u,v):= A(u)+B(v)$ then $W_u=a(u)$ and  $W_v=b(v)$.
Therefore, for solutions $u>0$ and $v>0$  of  \eqref{s}, 
\begin{align*}
&\displaystyle  \int_{\Om} - \left[\Delta u\; (x\cdot \nabla v)\right. +\left.\Delta v\; (x\cdot \nabla u)\right]  
= \int_{\Om} \sum_j x_j\left(\frac{\p W}{\p v}\frac{\p v}{\p x_j}+\frac{\p W}{\p u}\frac{\p u}{\p x_j}\right)=\int_{\Om} \sum_j x_j \frac{\p W}{\p x_j}\\
&\qquad=-N \int_{\Om} W+\int_{\Om} {\rm{div}}(W\,x )
= -N \int_{\Om} \left[A(u)+B(v)\right]+ \int_{\p\Om}(x\cdot \nu)\;W(u,v),
\end{align*}
and 
\begin{equation}\label{graduv}
\displaystyle \int_{\Om} \nabla u \nabla v= \int_{\Om}u a(u)
= \int_{\Om}v b(v).    
\end{equation}

Applying Lemma \ref{PRMidentity} (Pohozaev-Rellich-Mitidieri type identity) we get that 
\begin{align}\label{PRM:1}
&\displaystyle N \int_{\Om} \left[A(u)+B(v)\right] -{(N-2)} \int_{\Om}\,u\,a(u)
\\& =  \int_{\p\Om}(x\cdot \nu)\;W(u,v) -\int_{\p\Om}(\nabla u\cdot \nabla v)\; (x\cdot   \nu) + \int_{\p\Om}\frac{\p u}{\p \nu} \;(x\cdot \nabla v) +  \int_{\p\Om}\frac{\p v}{\p \nu}\;(x\cdot \nabla u).\nonumber
\end{align}

On the one hand, $u=v=0$ on the boundary, so the first integral in the r.h.s. vanishes. Moreover, since $u=0$ on $\p\Om$, the tangential component of $\na u$ vanishes and $\na u$ is parallel to the normal vector $\nu(x)$ at each point $x\in\Om$, in other words $\na u(x)=\pm |\na u(x)|\,\nu(x)$.

On the other hand, since Hopf's Lemma, $\frac{\partial u}{\p \nu}(x)<0$ for all $x\in\p\Om$,  so  $\na u(x)\cdot\nu=\frac{\partial u}{\p \nu}(x)=- |\na u(x)|$, and consequently, $\na u(x)=- |\na u(x)|\,\nu(x).$  Likewise,
$\frac{\partial v}{\p \nu}(x)=- |\na v(x)|$, and $\na v(x)=- |\na v(x)|\,\nu(x).$ 
Consequently, the  r.h.s. of \eqref{PRM:1} is reduced to
\begin{align*} 
&\Big(-|\na u(x)||\na v(x)| + |\na u(x)||\na v(x)| +|\na u(x)||\na v(x)|\Big)\, (x\cdot \nu)\\
&\qquad =|\na u(x)||\na v(x)|\, (x\cdot \nu)=\frac{\partial u}{\p \nu}\frac{\partial v}{\p \nu}\, (x\cdot \nu),
\end{align*}	
hence
\begin{align}\label{PRM:2}
&\displaystyle N \int_{\Om} \left[A(u)+B(v)\right] -{(N-2)} \int_{\Om}\,u\,a(u)
=  \int_{\p\Om}\frac{\partial u}{\p \nu}\frac{\partial v}{\p \nu}\, (x\cdot \nu).
\end{align}
Integrating by parts $A(t)=\int_0^t \frac{s^p}{(\ln (e+s))^\alpha}\, ds,$  
we have that
		\begin{equation}\label{A}
			\displaystyle A(t)-\frac{1}{p+1} ta(t)
=\frac{\alpha}{p+1}\int^t_0 \frac{s^{p+1}}{\big(\ln(e+s)\big)^{\alpha+1}}\,\frac{ds}{e+s}=\frac{\alpha}{p+1}\int^t_0 \frac{a(s)}{\ln(e+s)}\frac{s}{e+s}\,ds.
		\end{equation}
		Likewise 
		\begin{equation}\label{B} 
\displaystyle B(t)-\frac{1}{q+1}\, t\,b(t)
=\frac{\beta}{q+1}\int^t_0 \frac{b(s)}{\ln(e+s)}\frac{s}{e+s}\,ds.
		\end{equation}

Substituting \eqref{A}-\eqref{B} into \eqref{PRM:2}, and since \eqref{graduv},
\begin{align}\label{PRM:4a}
&\left[N\left( \frac{1}{p+1}+\frac{1}{q+1}\right)-(N-2)\right]\int_{\Om}\,u\,a(u)
+\frac{\alpha N}{p+1} 
\int_{\Om} \int_0^{u(x)} \frac{a(s)}{\ln(e+s)}\frac{s}{e+s}\,ds\,\,dx \nonumber\\
&\qquad +
\frac{\beta N}{q+1} \int_{\Om}\int^{v(x)}_0 \frac{b(s)}{\ln(e+s)}\frac{s}{e+s}\,ds \,\,dx
=  \int_{\p\Om}\frac{\partial u}{\p \nu}\frac{\partial v}{\p \nu}\, (x\cdot \nu).
\end{align} 

Introducing \eqref{h:crit} in \eqref{PRM:4a}, we deduce
\begin{align}\label{PRM:3}
&\frac{\alpha N}{p+1} 
\int_{\Om} \int_0^{u(x)} \frac{a(s)}{\ln(e+s)}\frac{s}{e+s}\,ds\,\,dx \\
&\qquad \nonumber +
\frac{\beta N}{q+1} \int_{\Om}\int^{v(x)}_0 \frac{b(s)}{\ln(e+s)}\frac{s}{e+s}\,ds \,\,dx
=  \int_{\p\Om}\frac{\partial u}{\p \nu}\frac{\partial v}{\p \nu}\, (x\cdot \nu).
\end{align}
Assume $\al,\be\le 0$,  if   $u\not\equiv 0$, then $v\not\equiv 0$, and 	the l.h.s. of \eqref{PRM:3} is non-positive. On the other hand, if $\Om$ is strongly star-shaped  with respect to $0$, the r.h.s. of \eqref{PRM:3} is strictly positive, giving a contradiction, and necessarily, $u\equiv 0$, and $v\equiv 0$.

Assume  $u\not\equiv 0$, then $v\not\equiv 0$, and introducing \eqref{h:crit:2} in \eqref{PRM:4a}, we deduce
\begin{align}\label{PRM:4}
&\frac{\alpha N}{p+1} 
\int_{\Om} \int_0^{u(x)} \frac{a(s)}{\ln(e+s)}\frac{s}{e+s}\,ds\,\,dx \\
&\qquad \nonumber +
\frac{\beta N}{q+1} \int_{\Om}\int^{v(x)}_0 \frac{b(s)}{\ln(e+s)}\frac{s}{e+s}\,ds \,\,dx
>  \int_{\p\Om}\frac{\partial u}{\p \nu}\frac{\partial v}{\p \nu}\, (x\cdot \nu).
\end{align}
Assume $\al,\be\le 0$, then the l.h.s. of \eqref{PRM:4} is non-positive. On the other hand, if $\Om$ is  star-shaped  with respect to $0$, the r.h.s. of \eqref{PRM:4} is strictly positive, giving a contradiction, and necessarily, $u\equiv 0$, and $v\equiv 0$.

\end{proof}

We point out that,
in general if $p$ and $q$ lie on the critical hyperbola,
  it would be interesting to check if the condition
$$ \frac{\al}{p+1}+ \frac{\be}{q+1}\le0$$
ensures  that
\begin{align*}
&\alpha \left(
\int_{\Om} \left[\int_0^{u(x)} \frac{a(s)}{\ln(e+s)}\frac{s}{e+s}\,ds - \int^{v(x)}_0 \frac{b(s)}{\ln(e+s)}\frac{s}{e+s}\,ds\right]\,dx \right)
\le 0  
\end{align*}
or equivalently
$$
N \int_{\Om} \left[A(u)+B(v)\right] -{(N-2)} \int_{\Om}\,\left[\theta\,u\,a(u)+(1-\theta)\,v\,b(v)\right] \le 0.
$$

\section{On Orlicz spaces}
\label{sec:orl}
Let us sumarise in this section some basics results  on Orlicz spaces (cf.  (\cite{Adams},\cite{Krasnoselski-Ruticki} and \cite{Rao-Ren}).
\subsection{${\mathcal N}$-functions}$ $

\begin{defi}(${\mathcal N}$-functions) $ $ \label{Nfunctions}
A function  $H:[0,\infty)\to[0,\infty)$  is said to be a 
${\mathcal N}$-function  if an only if $H$ is continuous, convex, 
$ H(t) = 0 $ if and only if $t = 0$ and 
$$\lim_{t\to 0}H(t)/t=0, \quad 
\lim_{t\to+\infty}H(t)/t=+\infty.$$
\end{defi}
Any ${\mathcal N}$-function is extended to $\mathbb{R}$ as an even function.  We denote  by $h$ the left derivative of $H$.

\subsection{Orlicz classes, Orlicz spaces and  the Luxembourg norm} $ $

Associated to the ${\mathcal N}$-function $H$ we have the following class of functions. 
Let $\Om\subset \mathbb{R}^N$ be here  an arbitrary  open set.

For our application, $\Om\subset \mathbb{R}^N$ is a {\bf bounded} set.

\begin{defi} (Orlicz class) $ $
The {\it  Orlicz class $K^H (\Om)$} is defined by
$$K^H (\Om) := \left\{u:\Om\to\mathbb{R} : u
\hbox{ is  measurable and }
\int_\Om H\big(u(x)\big)\, dx<+\infty\right\}.$$
\end{defi}

Orlicz classes are convex sets, but in general not linear spaces.

\begin{defi} (Orlicz space). \label{LH}$ $
We say that $u\in L^H (\Om)$ if and only if there exists a constant $c>0$ such that $cu\in K^H (\Om).$
$L^H (\Om)$  is a vector space, and it 
is called the 
{\it Orlicz space associated to $H$}.
\end{defi}

Let us recall the following result: 
\begin{lem}\label{delta2bis}
Let $H$ be an ${\mathcal N}$-function. Then, 
\begin{enumerate}
\item[\rm(i)] $L^H(\Om)=K^H(\Om)$ if and only if $H$ satisfies the so called {\bf $\Delta_2$-condition}:
\begin{equation*}
\exists  k\geq 1, \quad   H(2t)\leq kH(t)  \quad \forall t\geq 0.
\end{equation*}
\item[\rm (ii)] If $\Om$ is bounded then $L^H(\Om)=K^H(\Om)$ if and only if $H$ satisfies the so called {\bf $\Delta_2$-condition  at infinity} :
\begin{equation}\label{d2infty}
\exists  k\geq 1, \, \exists t_0\geq 0  ,\quad H(2t)\leq kH(t)  \quad \forall t\geq t_0.\end{equation}
\item[\rm (iii)]  Assume $H$ is derivable and its derivative $h$ is continuous and  strictly increasing. If 
\begin{equation}\label{delta22}\lim_{t\to+\infty}\frac{th(t)}{H(t)}=\alpha\in (1,+\infty)\end{equation}
then both $H$ and $\widetilde{H}$ satisfy the $\Delta_2$-condition at infinity,  where $\widetilde{H}$ is defined below in Definition \ref{def:Y:conj}.\footnote{\cite[Theorem 4.2]{Krasnoselski-Ruticki} clarify that $H$ and $\widetilde{H}$ satisfies the $\Delta_2$-condition if and only if $H$  satisfies the $\Delta_2$-condition and the  $\na_2$-condition, in other words, there exist numbers $h>1$ and $t_1 \ge 0$ such that
$$
H(t)\le \frac1{2h}H(ht)
\quad\text{for}\ t \ge t_1.
$$
}
\end{enumerate}
\end{lem}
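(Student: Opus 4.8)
The plan is to take (i) and (ii) as the classical dichotomy between Orlicz classes and Orlicz spaces (as in \cite{Krasnoselski-Ruticki}) and to obtain (iii) from a short asymptotic computation. For (i), the inclusion $K^H(\Om)\subseteq L^H(\Om)$ is immediate from Definition \ref{LH} with $c=1$, so the content is the reverse inclusion under $\Delta_2$ and a counterexample when $\Delta_2$ fails. If $H(2t)\le kH(t)$ for all $t\ge 0$ and $u\in L^H(\Om)$, I would choose $c\in(0,1)$ with $\int_\Om H(cu)\,dx<\infty$ and $m\in\N$ with $2^mc\ge1$, iterate the inequality to get $H(2^mt)\le k^mH(t)$, and conclude $\int_\Om H(u)\,dx\le k^m\int_\Om H(cu)\,dx<\infty$ using that $H$ is even and nondecreasing on $[0,\infty)$; hence $u\in K^H(\Om)$. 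If instead $\Delta_2$ fails, I would pick for each $n$ a point $s_n>0$ with $H(2s_n)>2^nH(s_n)$, take pairwise disjoint measurable $E_n\subseteq\Om$ with $|E_n|$ proportional to $2^{-n}/H(s_n)$ (the proportionality constant chosen so that $\sum_n|E_n|\le|\Om|$, which is where one separates the cases of $\Delta_2$ failing near $0$ from failing near $\infty$), and set $u:=\sum_n 2s_n\,\mathbf{1}_{E_n}$; then $\int_\Om H(u/2)\,dx=\sum_n|E_n|H(s_n)<\infty$ but $\int_\Om H(u)\,dx\ge\sum_n|E_n|2^nH(s_n)=+\infty$, so $u\in L^H(\Om)\setminus K^H(\Om)$.

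For (ii) the point is that on a bounded $\Om$ only the behaviour of $H$ at infinity is felt: writing $\Om=\{|u|\le t_0\}\cup\{|u|>t_0\}$ one has $\int_{\{|u|\le t_0\}}H(u)\,dx\le H(t_0)|\Om|<\infty$ unconditionally, so the argument of (i) carries over using $\Delta_2$ at infinity \eqref{d2infty}; and in the converse direction the $s_n$ may now be taken with $s_n\to+\infty$, whence $H(s_n)\to+\infty$, the sets $E_n$ of measure $2^{-n}/H(s_n)$ automatically satisfy $\sum_n|E_n|<\infty$ and fit inside the bounded $\Om$, so no case distinction is needed.

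For (iii), condition \eqref{delta22} says $\frac{d}{dt}\log H(t)=h(t)/H(t)\sim\alpha/t$; fixing $1<\alpha_1<\alpha<\alpha_2$ there is $t_0$ with $\alpha_1/t\le h(t)/H(t)\le\alpha_2/t$ for $t\ge t_0$, and integrating over $[t,2t]$ gives $2^{\alpha_1}H(t)\le H(2t)\le 2^{\alpha_2}H(t)$ for $t\ge t_0$, i.e. $H$ satisfies $\Delta_2$ at infinity. For $\widetilde H$ I would use the Young equality $\widetilde H(h(t))=th(t)-H(t)$ (from the definition of the complementary function, Definition \ref{def:Y:conj}; the same identity is recorded as \eqref{ye}), which gives $\widetilde H(h(t))/H(t)=th(t)/H(t)-1\to\alpha-1$. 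Since $H(t)/t\to\infty$ forces $h(t)\to\infty$, the derivative $h$ is a continuous increasing bijection of $[t_0,\infty)$ onto $[h(t_0),\infty)$; writing $s=h(t)$ and $\tilde h:=h^{-1}$ (the derivative of $\widetilde H$, again continuous and strictly increasing, with $\widetilde H$ an $\mathcal N$-function) one has $\tilde h(s)=t$ and
$$\frac{s\,\tilde h(s)}{\widetilde H(s)}=\frac{t\,h(t)}{\widetilde H(h(t))}=\frac{th(t)/H(t)}{\widetilde H(h(t))/H(t)}\ \longrightarrow\ \frac{\alpha}{\alpha-1}\in(1,\infty)\qquad(s\to\infty),$$
so feeding this back into the first step, now with $\widetilde H$ in place of $H$, shows that $\widetilde H$ also satisfies $\Delta_2$ at infinity.

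The main obstacle I anticipate is the counterexample construction in the ``only if'' directions of (i)--(ii): choosing the sequence $s_n$ and pairwise disjoint sets of exactly the right measure, and --- only for the global statement (i) --- handling uniformly whether $\Delta_2$ fails near $0$ or near $\infty$; for the bounded-domain statement (ii), which is the one actually used in the paper, this difficulty disappears. Part (iii), by contrast, is a short computation once the logarithmic-derivative estimate and Young's equality are in hand, the only points of care being that $h(t)\to\infty$ (so $s=h(t)$ really sweeps out a neighbourhood of $+\infty$) and that $\widetilde H$ inherits the structure allowing the first step to be reapplied verbatim.
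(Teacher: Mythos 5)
Your proof is correct in substance, but it is a genuinely different route from the paper's: the paper's entire proof of this lemma is a citation (part (i) to Krasnoselskii--Rutickii Theorem~8.2, part (ii) to Rao--Ren, part (iii) to Krasnoselskii--Rutickii Theorems~4.1 and~4.3), whereas you give a self-contained argument. For (i)--(ii) your iteration of $H(2^m c|u|)\le k^mH(c|u|)$ and the disjoint-sets counterexample are the standard proofs behind those citations, and your observation that on a bounded domain the set $\{|u|\le t_0\}$ contributes at most $H(t_0)|\Om|$ is exactly why only the behaviour at infinity matters. For (iii) your computation --- the logarithmic-derivative bound $\alpha_1/t\le h(t)/H(t)\le\alpha_2/t$ integrated over $[t,2t]$, followed by the substitution $s=h(t)$ and Young's equality to get $s\tilde h(s)/\widetilde H(s)\to\alpha/(\alpha-1)$ --- is precisely the manipulation the paper itself carries out concretely for $A,B,\widetilde A,\widetilde B$ in the proof of Proposition~\ref{ref}, so you have in effect supplied the abstract lemma that the paper only quotes; this also lets you bypass the $\na_2$-condition mentioned in the footnote. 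One caveat: in the ``only if'' direction of (i), your construction needs $\sum_n|E_n|<\infty$ (or $|\Om|=\infty$), and when $\Delta_2$ fails only near $t=0$ the measures $2^{-n}/H(s_n)$ need not be summable; on a bounded domain no choice of proportionality constant rescues this --- indeed for bounded $\Om$ the ``only if'' of (i) is genuinely false when $\Delta_2$ holds at infinity but fails near zero, since (ii) then still gives $L^H=K^H$. So (i) must be read for $\Om$ of infinite measure (or as a statement over all open sets), and your case-splitting remark should be resolved accordingly. Since the paper only ever invokes (ii) and (iii), this does not affect anything downstream.
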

\begin{proof}
(i) It follows from \cite[Theorem 8.2]{Krasnoselski-Ruticki},  {\rm (ii)}  from \cite[Theorem 2]{Rao-Ren} and {\rm (iii)} from \cite[Theorem 4.1 and Theorem 4.3]{Krasnoselski-Ruticki}.  
\end{proof}

\bigskip

We can provide the Orlicz space $L^H(\Om)$  with the {\it Luxemburg norm} : 
\begin{equation}
\label{def:L-norm}
\|u\|_{(H)}:=\inf\left\{\lambda>0\ :\ \int\limits_\Om H\left(\frac{u(x)}{\lambda}\right)\, dx\le 1\right\}.
\end{equation}

\begin{lem}\label{Ban}
\begin{enumerate}
\item[\rm(i)] $\left(L^H(\Om), \|\cdot\|_{(H)}\right)$ is a Banach space.  
\item[\rm (ii)] If $u\in L^H (\Omega)$, $u\not\equiv 0$,  then $\int_\Om H\left({\frac{u}{\|u\|_{(H)}}}\right)\, dx\le1$.
\item[\rm (iii)]  $\|u\|_{(H)}\leq \max\{\int_\Om H(u)\, dx,\, 1\}$.
\item[\rm (iv)] If $H$ satisfies the $\Delta_2$-condition then,  for all $u\not=0$, $u\in L^H (\Omega)$, 
it holds $$\int_\Om H\left({\frac{u}{\|u\|_{(H)}}}\right)\, dx=1.$$
If $\Om$ is bounded, the conclusion holds if $H$ satisfies the $\Delta_2$- condition at infinity.
\end{enumerate}
\end{lem}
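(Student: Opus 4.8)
The plan is to prove the four items in the order (iii), (ii), (iv), (i): the norm estimate (iii) and the unit-ball inequality (ii) are elementary consequences of the convexity of $H$, item (iv) isolates where the $\Delta_2$-condition enters, and the Banach-space statement (i) builds on the previous ones. For (iii): if $\int_\Om H(u)\,dx\le 1$ then $\lambda=1$ lies in the set defining $\|u\|_{(H)}$ in \eqref{def:L-norm}, so $\|u\|_{(H)}\le 1$; if instead $M:=\int_\Om H(u)\,dx>1$, convexity of $H$ with $H(0)=0$ gives $H(u/M)\le M^{-1}H(u)$ pointwise, hence $\int_\Om H(u/M)\,dx\le 1$ and $\|u\|_{(H)}\le M$, so in all cases $\|u\|_{(H)}\le\max\{M,1\}$. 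For (ii): set $\lambda_0:=\|u\|_{(H)}$, which is finite because $u\in L^H(\Om)$ (for $\lambda$ large, $H(u/\lambda)\le(c\lambda)^{-1}H(cu)$ by convexity, where $c>0$ is chosen with $cu\in K^H(\Om)$, so $\int_\Om H(u/\lambda)\,dx\to 0$) and positive because $u\not\equiv 0$ (otherwise Fatou would force $\int_\Om H(u/\lambda)\,dx\to+\infty$ as $\lambda\to 0^+$); for every $\lambda>\lambda_0$ the definition of the infimum gives $\int_\Om H(u/\lambda)\,dx\le 1$, and as $\lambda\downarrow\lambda_0$ the integrands $H(u/\lambda)=H(|u|/\lambda)$ increase pointwise to $H(|u|/\lambda_0)$, so monotone convergence yields $\int_\Om H(u/\lambda_0)\,dx\le 1$.

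For (iv), the crucial point is that under the $\Delta_2$-condition — or, when $\Om$ is bounded, the $\Delta_2$-condition at infinity — the map $\Psi(\lambda):=\int_\Om H(u/\lambda)\,dx$ is finite for every $\lambda>0$ (this is exactly $L^H(\Om)=K^H(\Om)$ from Lemma \ref{delta2bis}(i)--(ii)) and moreover continuous on $(0,\infty)$: for $\lambda$ in a neighbourhood of a fixed $\lambda_0$ one has $H(u/\lambda)\le H(2u/\lambda_0)\le k\,H(u/\lambda_0)\in L^1(\Om)$, so dominated convergence applies. Since $\Psi$ is continuous and non-increasing, with $\Psi(\lambda)\to 0$ as $\lambda\to+\infty$ and $\Psi(\lambda)\to+\infty$ as $\lambda\to 0^+$ (using $u\not\equiv 0$ and $H(t)/t\to+\infty$), the infimum in \eqref{def:L-norm} is attained, and combined with (ii) this forces $\Psi(\|u\|_{(H)})=1$. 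This is the step requiring the most care: the $\Delta_2$-condition is used precisely to upgrade the lower semicontinuity of $\Psi$ (which always holds, by Fatou) to genuine continuity, so that the infimum is attained with value exactly $1$; without $\Delta_2$ the value there may be strictly smaller than $1$, which is why the hypothesis cannot be removed.

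For (i): positive homogeneity $\|cu\|_{(H)}=|c|\,\|u\|_{(H)}$ follows from the substitution $\lambda\mapsto\lambda/|c|$ in \eqref{def:L-norm}, and $\|u\|_{(H)}=0$ forces $u=0$ a.e. (otherwise $\Psi(\lambda)\to+\infty$ as $\lambda\to 0^+$), which together with the finiteness noted above makes $\|\cdot\|_{(H)}$ a well-defined norm on $L^H(\Om)$ once the triangle inequality is in hand. The triangle inequality is the one algebraic step: for nonzero $u,v$ with $\mu:=\|u\|_{(H)}$, $\nu:=\|v\|_{(H)}$ and any $\lambda>\mu+\nu$, write $(u+v)/\lambda=\tfrac{\mu}{\lambda}\,\tfrac{u}{\mu}+\tfrac{\nu}{\lambda}\,\tfrac{v}{\nu}+\bigl(1-\tfrac{\mu+\nu}{\lambda}\bigr)\cdot 0$ and use convexity of $H$ with $H(0)=0$ to obtain $H\bigl((u+v)/\lambda\bigr)\le\tfrac{\mu}{\lambda}H(u/\mu)+\tfrac{\nu}{\lambda}H(v/\nu)$; integrating and using (ii) gives $\int_\Om H\bigl((u+v)/\lambda\bigr)\,dx\le(\mu+\nu)/\lambda\le 1$, hence $\|u+v\|_{(H)}\le\mu+\nu$ (the cases $u\equiv 0$ or $v\equiv 0$ being trivial). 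Finally, for completeness: on the bounded set $\Om$ one has the continuous embedding $L^H(\Om)\hookrightarrow L^1(\Om)$ (since $H$ is superlinear, $t\le H(t)$ for large $t$, which gives an $L^1$ bound after rescaling by the Luxemburg norm and invoking (ii)), so a $\|\cdot\|_{(H)}$-Cauchy sequence $\{u_n\}$ is $L^1$-Cauchy and admits an a.e.-convergent subsequence $u_{n_k}\to u$; for fixed $\e>0$, Fatou's lemma applied in $k$ to the bounds $\int_\Om H\bigl((u_{n_j}-u_{n_k})/\e\bigr)\,dx\le 1$ (valid for $j,k$ large, by the Cauchy property and (ii)) yields $\int_\Om H\bigl((u_{n_j}-u)/\e\bigr)\,dx\le 1$, so $u\in L^H(\Om)$ and $\|u_{n_j}-u\|_{(H)}\le\e$, and the standard argument upgrades this subsequential convergence to $u_n\to u$ in $L^H(\Om)$.
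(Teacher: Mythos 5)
Your proof is correct, and it is considerably more detailed than what the paper provides: the paper simply cites Rao--Ren (Theorem 10, p.~67 for completeness; Proposition 6, p.~77 for item (iv)) and declares (ii) and (iii) trivial from the definition of the Luxemburg norm, whereas you reconstruct the standard textbook arguments in full. Your treatment of (ii) via monotone convergence as $\lambda\downarrow\|u\|_{(H)}$, of (iii) via convexity with $H(0)=0$, of the triangle inequality via the convex combination with the zero function, and of completeness via the embedding $L^H(\Om)\hookrightarrow L^1(\Om)$ plus Fatou on an a.e.-convergent subsequence are exactly the classical proofs, and your identification of where $\Delta_2$ enters in (iv) --- upgrading Fatou lower semicontinuity of $\Psi(\lambda)=\int_\Om H(u/\lambda)\,dx$ to genuine continuity so the infimum is attained with value $1$ --- is the right conceptual point. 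One small gloss worth tightening: in the case where only the $\Delta_2$-condition at infinity holds, the chain $H(u/\lambda)\le H(2u/\lambda_0)\le k\,H(u/\lambda_0)$ is valid only where $|u|/\lambda_0\ge t_0$; on the complementary set one has $H(2|u|/\lambda_0)\le H(2t_0)$, a constant which is integrable precisely because $\Om$ is bounded, so the dominating function should be $k\,H(u/\lambda_0)+H(2t_0)$. This is a two-line repair, not a gap, and it explains why the weaker hypothesis suffices only for bounded $\Om$, as the statement of (iv) indicates.
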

\proof 
The proof of (i)   can be found in \cite{Rao-Ren}[Theorem 10, p. 67]. The proof of {\rm (ii)} and {\rm (iii)} follow trivially form the definition of the Luxembourg norm.
The result of {\rm (iv)} is proved in \cite{Rao-Ren}[Proposition 6, p. 77].
\qed

\medskip

\subsection{The Young-conjugate of a ${\mathcal N}$-function} 
\begin{defi}\label{def:Y:conj}
Let $H$ be an $\mathcal N$-function and denote  $\widetilde{H}$ its  Legendre transform, i.e.
$$\widetilde{H}(s):=\sup\limits_{\sigma\in\mathbb R}\{s\sigma-H(\sigma)\}.$$
Then, $\widetilde{H}$ is called the {\it Young-conjugate of $H$}.
\end{defi}

Some authors call $\widetilde{H}$ the complementary function of $H$, see \cite[p. 11-13]{Krasnoselski-Ruticki}. We denote by $\tilde{h}$ the   right derivative of $\widetilde{H}$. 

\begin{rem}\label{tildeh:h-1}
If $h$ is strictly increasing, then $\tilde{h}(t)=h^{-1}(t)$.
\end{rem}

The following properties are trivial to prove.
\begin{pro}\label{pro:H*}$ $
Let $H$ be an $\mathcal N$-function.
\begin{enumerate}
\item[\rm(i)] $\widetilde{(\widetilde{H})}=H$.
\item[\rm (ii)]  $\widetilde{H}$ is an ${\mathcal N}$-function. 
\item[\rm (iii)]  $H$ and $\widetilde{H}$ satisfy the {\it Young inequality} :
\begin{equation*}
\forall s,t \in (0, +\infty),\quad st\leq H(s)+ \widetilde{H}(t)
\end{equation*}
and the equality holds if and only if $t = h(s)$ or 
$ s = \tilde{h} (t).$ 
\item[\rm (iv)] The following {\it  1st Hölder's inequality } holds :
\begin{equation*}
\forall  f\in L^H(\Om), \, \forall g\in L^{\tilde{H}}(\Om),\quad 
\int\limits_\Om|f(x)g(x)|\, dx\le  2\|f\|_{(H)}\|g\|_{(\tilde{H})}.
\end{equation*}
\end{enumerate}
\end{pro}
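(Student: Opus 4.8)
The plan is to deduce all four items from elementary convex analysis together with the integral representation $H(t)=\int_0^t h(\tau)\,d\tau$, where $h$ is the left derivative of $H$; the key structural fact is that $h$ and $\tilde h$ (the right derivative of $\widetilde H$) are mutually generalized-inverse, non-decreasing functions with $h(0^+)=\tilde h(0^+)=0$ and $h(\tau),\tilde h(\tau)\to+\infty$ as $\tau\to+\infty$.

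I would start with (iii), which is the key tool for the remaining items. For any $s,t\ge0$ the definition of the Legendre transform gives $st-H(s)\le\sup_{\sigma}\bigl(t\sigma-H(\sigma)\bigr)=\widetilde H(t)$, i.e. $st\le H(s)+\widetilde H(t)$. For the equality case I would observe that equality forces $s$ to attain the supremum defining $\widetilde H(t)$; since $\sigma\mapsto H(\sigma)-t\sigma$ is convex this is equivalent to $t$ lying in the subdifferential of $H$ at $s$, hence to $t=h(s)$ when $h$ is continuous, and symmetrically to $s=\tilde h(t)$ (using Remark \ref{tildeh:h-1}).

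For (i) I would invoke that $H$ is convex, finite and lower semicontinuous, so the Fenchel--Moreau biconjugation theorem yields $\widetilde{(\widetilde H)}=H$; alternatively this is the classical complementary-areas identity under the graph of $h$. For (ii): convexity of $\widetilde H$ is automatic as a pointwise supremum of affine functions; $\widetilde H(0)=\sup_\sigma(-H(\sigma))=0$ because $H\ge0=H(0)$, while $\lim_{\sigma\to0^+}H(\sigma)/\sigma=0$ makes $t\sigma-H(\sigma)>0$ for small $\sigma>0$ when $t>0$, so $\widetilde H(t)>0$; finiteness of $H$ makes $h$ finite everywhere, giving $\tilde h(0^+)=0$ and $\tilde h(\tau)\to+\infty$, whence $\widetilde H(t)/t=\tfrac1t\int_0^t\tilde h(\tau)\,d\tau\to0$ as $t\to0^+$ and $\to+\infty$ as $t\to+\infty$; continuity of $\widetilde H$ on $(0,\infty)$ follows from convexity and finiteness, and at $0$ from $\widetilde H(t)/t\to0$.

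Finally, for (iv), assuming $f\in L^H(\Om)$ and $g\in L^{\widetilde H}(\Om)$ both not a.e. zero (the degenerate case being trivial), I would normalize $\bar f=f/\|f\|_{(H)}$ and $\bar g=g/\|g\|_{(\widetilde H)}$, use Lemma \ref{Ban}(ii) to get $\int_\Om H(\bar f)\,dx\le1$ and $\int_\Om\widetilde H(\bar g)\,dx\le1$, apply the pointwise Young inequality from (iii) to $|\bar f(x)|\,|\bar g(x)|$, and integrate to obtain $\int_\Om|\bar f\bar g|\,dx\le2$, which is the claimed bound. The only mildly delicate point in the whole argument is the equality characterization in (iii) when $h$ is merely monotone rather than continuous and strictly increasing; this is handled cleanly by phrasing it through subdifferentials, and it specializes to the stated form exactly under the hypotheses used later in the paper.
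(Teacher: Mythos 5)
Your argument is correct and is essentially the classical one: the paper itself does not prove these facts but simply cites \cite{Krasnoselski-Ruticki} and \cite{Rao-Ren}, and your route (Young's inequality from the definition of the Legendre transform, biconjugation for (i), sup-of-affine-functions plus the limit conditions for (ii), and normalization plus pointwise Young for (iv)) is exactly the standard proof found in those references. Your handling of the equality case in (iii) via subdifferentials is, if anything, more careful than the bare statement in the paper.
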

\begin{proof}
The proof of (i)-(ii) can be found for instance in \cite{Krasnoselski-Ruticki}[Chapter 1].
For the proof of (iv), see \cite[Proposition 1 and Remark in p. 58]{Rao-Ren}.
\end{proof}

\subsection{Dual norm, the dual of an Orlicz space and reflexivity}$ $
\begin{defi}
Let $H$ be an $\mathcal N$-function. For all $u\in L^{H}(\Om)$ we define {\it the dual norm of $u$  or Orlicz norm of $u$} as 
\begin{equation}\label{onorm}
\|u\|_H :=\sup\left\{\int_\Om uv \, dx :\, \|v\|_{(\tilde{H})}\leq 1\right\}.
\end{equation}
\end{defi}
We have 
\begin{pro}
\label{proLH}

\begin{enumerate}
\item[\rm(i)] For all  $u\in L^{H}(\Om)$ 
$$\| u\|_{(H)}\leq \|u\|_H\leq 2\|u\|_{(H)}.$$
\item[\rm (ii)] 2nd Hölder's inequality :
\begin{equation}\label{2HIn}
\forall  u\in L^H(\Om), \, \forall v\in L^{\tilde{H}}(\Om),\quad 
\int\limits_\Om |u(x)v(x)| \, dx\le  \|u\|_{H} \|v\|_{(\tilde{H})}.\end{equation}
\item[\rm (iii)]For all  $u\in L^{H}(\Om)$ , $u\not=0$, 
\begin{equation}\label{2HIn2} \int_\Om H\bigg( \frac{u}{\|u\|_H} \bigg)dx \leq 1.\end{equation}
\item[\rm (iv)]  If $\widetilde{H}$ satisfies the $\Delta_2$-condition then the dual space $\big(L^H(\Om), \|\cdot \|_{H}\big)'$ coincides with $\big(L^{\tilde{H}}(\Om), \|\cdot \|_{(\tilde{H})}\big)$. 

\item[\rm(v)]   $L^H(\Om)$ is a reflexive Banach space if and only if $H$  and $\widetilde{H}$ satisfy 
the $\Delta_2$-condition.
\end{enumerate}
\medskip
If $\Om$ is bounded, the conclusions {\rm (iv)} and {\rm (v)}  hold if $H$  and $\tilde H$ satisfy the $\Delta_2$- condition at infinity.
\end{pro}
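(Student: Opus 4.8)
The plan is to note that Proposition \ref{proLH} only gathers classical facts about Orlicz spaces: parts (i), (iv), (v) are quoted from the standard monographs, while (ii) and (iii) are short consequences of the definition \eqref{onorm} of the Orlicz norm together with results already recorded above.

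\emph{Part (i).} The upper bound $\|u\|_H\le 2\|u\|_{(H)}$ is immediate: for any $v$ with $\|v\|_{(\widetilde H)}\le 1$, the first Hölder inequality of Proposition \ref{pro:H*}(iv) gives $\int_\Om uv\,dx\le 2\|u\|_{(H)}\|v\|_{(\widetilde H)}\le 2\|u\|_{(H)}$, and taking the supremum over such $v$ in \eqref{onorm} yields the claim. The lower bound $\|u\|_{(H)}\le\|u\|_H$ is the only genuinely nontrivial point: one must produce a near-optimal test function, essentially of the form $v=h\big(|u|/\|u\|_{(H)}\big)$ suitably normalized, so that $\int_\Om uv\,dx$ approaches $\|u\|_{(H)}$. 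I would cite \cite{Rao-Ren} (or \cite{Adams,Krasnoselski-Ruticki}) for this; it is the place where the proof really uses the structure of $\mathcal N$-functions, and it is the step I would flag as the main obstacle — everything else is either a one-line computation or a direct appeal to the literature.

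\emph{Parts (ii) and (iii).} For (ii), given $v\in L^{\widetilde H}(\Om)$, $v\not\equiv 0$, take the test function $w:=\operatorname{sgn}(u)\,|v|/\|v\|_{(\widetilde H)}$. Since the Luxemburg norm depends only on the modulus, $\|w\|_{(\widetilde H)}\le 1$, while $uw=|uv|/\|v\|_{(\widetilde H)}$ pointwise, so \eqref{onorm} gives $\int_\Om|uv|\,dx\le\|u\|_H\|v\|_{(\widetilde H)}$, i.e.\ \eqref{2HIn}. For (iii), part (i) gives $\|u\|_H\ge\|u\|_{(H)}>0$ when $u\not\equiv 0$; since $H$ is even and nondecreasing on $[0,\infty)$, this forces $H\big(u(x)/\|u\|_H\big)\le H\big(u(x)/\|u\|_{(H)}\big)$ for a.e.\ $x$, and integrating while invoking Lemma \ref{Ban}(ii) yields $\int_\Om H\big(u/\|u\|_H\big)\,dx\le\int_\Om H\big(u/\|u\|_{(H)}\big)\,dx\le 1$, which is \eqref{2HIn2}.

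\emph{Parts (iv) and (v).} By \eqref{2HIn}, every $v\in L^{\widetilde H}(\Om)$ induces a bounded functional on $\big(L^H(\Om),\|\cdot\|_H\big)$ of norm at most $\|v\|_{(\widetilde H)}$; the converse — that every bounded functional is represented this way, with matching norm — is the classical duality theorem for Orlicz spaces, the $\Delta_2$-condition (via Lemma \ref{delta2bis}, identifying $L^H$ with $K^H$) being exactly what rules out singular functionals, so that the full topological dual reduces to the Köthe dual $L^{\widetilde H}$. I would cite \cite{Rao-Ren,Krasnoselski-Ruticki,Adams}. Finally, for (v), applying (iv) to $H$ and, using $\widetilde{(\widetilde H)}=H$ from Proposition \ref{pro:H*}(i), also to $\widetilde H$, one obtains that the canonical isometry $L^H\to (L^H)''$ is onto precisely when both $H$ and $\widetilde H$ satisfy $\Delta_2$; the converse (that violating $\Delta_2$ destroys reflexivity) is again classical and cited from the same references. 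For bounded $\Om$ every occurrence of $\Delta_2$ above may be relaxed to the $\Delta_2$-condition at infinity, by Lemma \ref{delta2bis}(ii).
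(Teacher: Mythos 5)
Your proposal is correct and takes essentially the same route as the paper: the paper's own proof simply cites \cite[Lemma 9.2 and Theorem 9.3]{Krasnoselski-Ruticki} for (i)--(iii) and \cite[Theorem 10, p.\ 113]{Rao-Ren} for (iv)--(v), which is exactly where you defer for the nontrivial steps (the lower bound in (i), the duality theorem, and reflexivity). The elementary derivations you supply for the upper bound in (i) and for (ii)--(iii) are accurate and consistent with the definitions \eqref{onorm} and Lemma \ref{Ban} used in the paper.
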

\proof
The proof of (i), (ii), (iii) follows from \cite[Lemma 9.2  and Theorem 9.3, p. 74]{Krasnoselski-Ruticki}.
For the proof of (iv) and (v) see \cite{Rao-Ren}[Theorem 10, p. 113].
\qed

\begin{rem}\label{remLH}
Since Proposition \ref{proLH} {\rm (ii)} and (i), 2nd Hölder's inequality can be written:
\begin{equation*}
\forall  u\in L^H(\Om), \, \forall v\in L^{\tilde{H}}(\Om),\quad 
\int\limits_\Om |u(x)v(x)| \, dx\le  \|u\|_{H} \|v\|_{\tilde{H}}.
\end{equation*}
Observe that Proposition \ref{proLH} {\rm (iv)}  is a Riesz representation theorem for Orlicz spaces, whenever $\widetilde{H}$ satisfies the $\Delta_2$-condition.
\end{rem}

\begin{defi}
We say that the sequence $\{u_{n}\}_{n\in \N}$ {\it converges in $H$-mean} to $u$ whenever 
\begin{equation*}
\lim_{n\rightarrow \infty}\int_\Om H\Big(|u_{n}(x)-u(x)|\Big)\, dx  =0.
\end{equation*}
\end{defi}
The folowing Theorem states that convergence in $H$-mean is equivalent to the convergence with respect to the Orlicz-norm, provided  that the $\Delta_2$-condition is satisfied, see \cite[Theorem 9.4]{Krasnoselski-Ruticki}.

\begin{thm}\label{th:conv:mean=norm} 
Let  $H$ be an $\mathcal N$-function satisfying the $\Delta_2$-condition.
Then, the convergence in $H$-mean is equivalent to the convergence with respect to the $\|\cdot\|_H$ norm.
\end{thm}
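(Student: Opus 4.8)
The plan is to move everything to the Luxemburg norm. By Proposition \ref{proLH}(i) the Orlicz norm $\|\cdot\|_H$ and the Luxemburg norm $\|\cdot\|_{(H)}$ are equivalent, so convergence in $\|\cdot\|_H$ is the same as convergence in $\|\cdot\|_{(H)}$; setting $v_n:=u_n-u$, it then suffices to show that $\|v_n\|_{(H)}\to 0$ if and only if $\int_\Om H(|v_n|)\,dx\to 0$.

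First I would prove ``$\|v_n\|_{(H)}\to 0\ \Rightarrow\ H$-mean convergence'', which does not even need the $\Delta_2$-condition. Ignoring the (trivial) indices with $v_n\equiv 0$, put $\lambda_n:=\|v_n\|_{(H)}$, so that $\lambda_n\in(0,1)$ for $n$ large. Lemma \ref{Ban}(ii) gives $\int_\Om H\big(|v_n|/\lambda_n\big)\,dx\le 1$, and since $H$ is convex with $H(0)=0$ one has $H(\lambda t)\le\lambda H(t)$ for every $\lambda\in[0,1]$ and $t\ge 0$. Applying this pointwise with $\lambda=\lambda_n$, $t=|v_n|/\lambda_n$ and integrating,
\[
\int_\Om H(|v_n|)\,dx\le\lambda_n\int_\Om H\big(|v_n|/\lambda_n\big)\,dx\le\lambda_n\longrightarrow 0 .
\]

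Then I would prove the converse, which is where the $\Delta_2$-condition enters. Fix $\varepsilon>0$ and choose $j\in\N$ with $2^j\ge 1/\varepsilon$. Iterating $H(2t)\le kH(t)$ gives $H(2^jt)\le k^jH(t)$ for all $t\ge 0$, hence, $H$ being non-decreasing, $H(t/\varepsilon)\le H(2^jt)\le k^jH(t)$ for all $t\ge 0$. Therefore
\[
\int_\Om H\big(|v_n|/\varepsilon\big)\,dx\le k^j\int_\Om H(|v_n|)\,dx\longrightarrow 0\qquad(n\to\infty),
\]
so $\int_\Om H(|v_n|/\varepsilon)\,dx\le 1$ for $n$ large, i.e.\ $\|v_n\|_{(H)}\le\varepsilon$ by the definition \eqref{def:L-norm} of the Luxemburg norm. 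Since $\varepsilon>0$ was arbitrary, $\|v_n\|_{(H)}\to 0$, and together with the previous step this yields the claimed equivalence.

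The main (mild) obstacle is the order of quantifiers in the last step: the constant $k^j$ coming from the iterated $\Delta_2$-inequality depends on $\varepsilon$ and blows up as $\varepsilon\to 0$, so one must fix $\varepsilon$ first, use it to fix $j$ and hence $k^j$, and only then send $n\to\infty$. I would also remark that in the situation where this theorem is actually applied the set $\Om$ is bounded and only the $\Delta_2$-condition at infinity \eqref{d2infty} is available; the same scheme goes through after splitting $\Om$ into the regions $\{|v_n|<\delta\}$, $\{\delta\le|v_n|<t_0\}$ and $\{|v_n|\ge t_0\}$, where one uses respectively that $H(\cdot/\varepsilon)$ can be made uniformly small on $\Om$ by choosing $\delta$ small, that $v_n\to 0$ in measure (a consequence of $H$-mean convergence via Chebyshev's inequality, since $H(\delta)>0$), and the iteration of \eqref{d2infty} exactly as above.
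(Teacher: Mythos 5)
Your argument is correct. Note that the paper does not actually prove Theorem \ref{th:conv:mean=norm}: it is quoted from Krasnosel'ski\u{\i}--Ruticki\u{\i} (their Theorem 9.4), so there is no in-paper proof to compare against; what you have written is essentially the standard textbook argument, supplied in full. Both directions check out: the easy implication uses only Lemma \ref{Ban}(ii) together with $H(\lambda t)\le\lambda H(t)$ for $\lambda\in[0,1]$ (convexity plus $H(0)=0$), and the converse correctly fixes $\varepsilon$ first, then the iterate $j$ with $2^j\ge 1/\varepsilon$, and only then lets $n\to\infty$, which is exactly the quantifier discipline the $\Delta_2$-iteration requires. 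The reduction from the Orlicz norm to the Luxemburg norm via Proposition \ref{proLH}(i) is also legitimate. Your closing remark is well taken and relevant to how the theorem is actually invoked in Proposition \ref{claim1}: there $\Om$ is bounded and only the $\Delta_2$-condition at infinity is verified (via Lemma \ref{delta2bis}(iii)), and your three-region splitting $\{|v_n|<\delta\}$, $\{\delta\le|v_n|<t_0\}$, $\{|v_n|\ge t_0\}$ — controlled respectively by continuity of $H$ at $0$ and $|\Om|<\infty$, by convergence in measure from Chebyshev with $H(\delta)>0$, and by the iterated inequality $H(2^jt)\le k^jH(t)$ valid for $t\ge t_0$ — does close that gap correctly.
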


\subsection{Comparison of ${\mathcal N}$-functions}
It is possible to consider different partial ordering relations between  ${\mathcal N}$-functions, and they imply continuous embedding into Orlicz spaces.

First let us introduce the following ordering relations.
The following partial ordering relation between functions is involved in embedding theorems between Orlicz space associated with different ${\mathcal N}$-functions: 

\begin{defi}\label{def:dom} Let $H$ and $H_1$ be two ${\mathcal N}$-functions.
\begin{enumerate}
\item[\rm(i)] The function $H_1$ is said to {\it dominate the function} $H$ globally (respectively near infinity), denoted by $H  \prec  H_1,$ if there exists a positive constant $c$  such that 
\begin{equation*}
H(s) \le  H_1(cs),\qq{for} s \ge 0\qquad (s\ge s_0).
\end{equation*} 

\item[\rm (ii)] The functions $H$ and $H_1$ are called {\it equivalent globally} (near infinity), denoted by $H\sim H_1,$ if each dominates the other globally (near infinity). 

\item[\rm (iii)] If for every $c > 0$, there exists  a number $s_c\ge 0$ such that 
\begin{equation*}
H(s) \le  H_1(cs),\qq{for} s \ge 0\qquad (s\ge s_c),
\end{equation*} 
then $H$ is said to {\it increase essentially more slowly} than $H_1$ (at infinity), and in this case we write $H  \prec\hspace{-1.5mm}\prec  H_1$. 
\end{enumerate} 
\end{defi}

\begin{rem}\label{def:ll} There are different partial ordering relations between  ${\mathcal N}$-functions.

\begin{enumerate}
\item[\rm(i)] It is said that $H$ {\it growths  more slowly} than $H_1$ at infinity, and it is written  $H\ll H_1$, if and only if 
$$\forall r>0,\quad \lim_{s\to+\infty} \frac{H(rs)}{H_1 (s)}=0.$$
In \cite[p. 15, Definition 1, (ii)]{Rao-Ren}  is said that $H_1$ is   {\it essentially stronger} than $H$. This definition is equivalent to \ref{def:ll} {\rm (i). See \cite[p. 16, Theorem 2.b, (i) $\iff$ (v)]{Rao-Ren}}.

\item[\rm (ii)] In particular, if 
$$\exists c>0, \quad  \lim_{s\to+\infty} \frac{H(s)}{H_1 (s)}=c,$$
then $H$ is {\it equivalent} to $H_1$ at infinity.

\end{enumerate}
\end{rem}

\begin{pro}\label{pro:ll}
Let $H$ and $H_1$ be two ${\mathcal N}$-functions. 

The continuous embedding $L^{H_1}(\Om ) \to L^H(\Om )$ holds if either $H_1$ dominates $H$ globally, or $|\Om| <  \infty$  and $H_1$ dominates $H$ near infinity.

Moreover, 
$L^{H}(\Om)\subset L^{H_1} (\Om)$  if and only if there exists a constant $C>0$ such that $\|u\|_{H_1}\le C\|u\|_{H}$ for all $u\in L^{H}(\Om).$
\end{pro}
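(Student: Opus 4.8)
The plan is to treat the two assertions separately. The continuous embedding will come from a direct estimate carried out with the Luxemburg norm and then transferred to the Orlicz norm via Proposition \ref{proLH}(i); the final norm equivalence is an application of the closed graph theorem.

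First I would prove the embedding assuming $H_1$ dominates $H$ globally, say $H(s)\le H_1(cs)$ for all $s\ge 0$. For $u\in L^{H_1}(\Om)$, $u\not\equiv 0$, set $\lambda:=\|u\|_{(H_1)}$, recall $\int_\Om H_1(u/\lambda)\,dx\le 1$ from Lemma \ref{Ban}(ii), and use the pointwise bound $H\big(|u|/(c\lambda)\big)\le H_1\big(|u|/\lambda\big)$ (substitute $s=|u|/(c\lambda)$ in the domination inequality) to get $\int_\Om H\big(u/(c\lambda)\big)\,dx\le 1$, hence $\|u\|_{(H)}\le c\,\|u\|_{(H_1)}$; Proposition \ref{proLH}(i) turns this into a continuous inclusion for the Orlicz norms. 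When instead $|\Om|<\infty$ and $H(s)\le H_1(cs)$ only for $s\ge s_0$, I would split $\Om$ into $\{|u|\le c\lambda s_0\}$, where $H(u/(c\lambda))\le H(s_0)$ since $H$ is increasing, and its complement, where the domination inequality applies; overcounting gives $\int_\Om H\big(u/(c\lambda)\big)\,dx\le C_0:=H(s_0)|\Om|+1$. A final rescaling by $M:=\max\{C_0,1\}\ge 1$, using convexity of $H$ together with $H(0)=0$ (so $H(\tau/M)\le H(\tau)/M$), brings the integral below $1$ and yields $\|u\|_{(H)}\le cM\,\|u\|_{(H_1)}$, again a continuous inclusion after Proposition \ref{proLH}(i).

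For the equivalence, the implication ``norm inequality $\Rightarrow$ set inclusion'' is immediate. For the converse I would view $L^H(\Om)\subset L^{H_1}(\Om)$ as saying that the identity map $\iota:\big(L^H(\Om),\|\cdot\|_H\big)\to\big(L^{H_1}(\Om),\|\cdot\|_{H_1}\big)$ is well defined, note that both spaces are Banach by Lemma \ref{Ban}(i) together with Proposition \ref{proLH}(i), and apply the closed graph theorem. The step I expect to be the main obstacle is checking that the graph of $\iota$ is closed: if $u_n\to u$ in $L^H$ and $u_n\to w$ in $L^{H_1}$ then $u=w$ a.e. I would deduce this from the $1$st Hölder inequality (Proposition \ref{pro:H*}(iv)): for any $E\subset\Om$ with $|E|<\infty$ the characteristic function $\chi_E$ belongs to $L^{\tilde H}(\Om)$ and to $L^{\widetilde{H_1}}(\Om)$ (since $\tilde H(1/\lambda)|E|\to 0$ as $\lambda\to\infty$, and likewise for $\widetilde{H_1}$), so $\int_E|u_n-u|\,dx\to 0$ and $\int_E|u_n-w|\,dx\to 0$, and passing to an a.e.-convergent subsequence forces $u=w$. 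The closed graph theorem then gives boundedness of $\iota$, which is exactly the claimed inequality $\|u\|_{H_1}\le C\|u\|_H$ for all $u\in L^H(\Om)$.
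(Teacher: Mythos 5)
Your proof is correct, and it is a genuinely different (much more self-contained) route than the paper's, which disposes of the first assertion as following ``directly from Definition \ref{LH} and Definition \ref{def:dom}(i)'' and refers to \cite[Theorem 4, p.~51]{Luxemburg} for the second. For the embedding, your computation is precisely the quantitative content hidden behind the paper's one-line appeal to the definitions: the substitution $s=|u|/(c\lambda)$ with $\lambda=\|u\|_{(H_1)}$ in the globally dominated case, and, in the finite-measure case, the split of $\Om$ into $\{|u|\le c\lambda s_0\}$ and its complement followed by the convexity rescaling $H(\tau/M)\le H(\tau)/M$ (valid since $H(0)=0$ and $M\ge 1$); this yields the explicit bound $\|u\|_{(H)}\le cM\,\|u\|_{(H_1)}$ and hence, via Proposition \ref{proLH}(i), a constant $2cM$ for the Orlicz norms. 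For the norm equivalence, your closed-graph argument is essentially the standard proof of the cited result of Luxemburg, and the verification that the graph of the inclusion is closed is sound: for $E\subset\Om$ of finite measure one has $\chi_E\in L^{\widetilde H}(\Om)\cap L^{\widetilde{H_1}}(\Om)$ because $\widetilde H(1/\lambda)|E|\to 0$ as $\lambda\to\infty$, so convergence in either Orlicz norm forces convergence in $L^1(E)$ and the two limits coincide a.e. The one step you wave through is the ``immediate'' direction: the inequality $\|u\|_{H_1}\le C\|u\|_H<\infty$ gives the set inclusion only once one knows that finiteness of the supremum in \eqref{onorm} characterizes membership in $L^{H_1}(\Om)$; this is standard (and consistent with how the paper uses Proposition \ref{proLH}) but is itself a small lemma rather than a tautology, so a sentence acknowledging it would make the argument airtight.
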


\begin{proof}
The proof of the first statement, follows directly from definition \ref{LH} of Orlicz space, and from definition \ref{def:dom}(i).

For the  proof of the second  estatement, see  \cite[Theorem 4, p. 51]{Luxemburg}. 
\end{proof}

\begin{pro}\label{pro:iso}
Let $H$ and $H_1$ be two ${\mathcal N}$-functions. If $H\sim H_1$ then $L^H (\Om)$ is isomorphic to $L^{H_1}(\Om)$.
\end{pro}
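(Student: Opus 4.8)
The statement to prove is Proposition \ref{pro:iso}: if $H\sim H_1$ (equivalent as $\mathcal N$-functions, globally or near infinity depending on whether $\Om$ is bounded), then $L^H(\Om)$ is isomorphic to $L^{H_1}(\Om)$.

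The plan is to deduce this directly from Proposition \ref{pro:ll}. By definition of equivalence (Definition \ref{def:dom}(ii)), $H\sim H_1$ means $H\prec H_1$ and $H_1\prec H$, i.e. each dominates the other (globally, or near infinity when $|\Om|<\infty$). First I would apply Proposition \ref{pro:ll} with the roles ``$H$'' and ``$H_1$'' as written there: since $H_1$ dominates $H$, the continuous inclusion $L^{H_1}(\Om)\hookrightarrow L^H(\Om)$ holds, and moreover there is a constant $C_1>0$ with $\|u\|_{H}\le C_1\|u\|_{H_1}$ for all $u\in L^{H_1}(\Om)$. Then I would apply Proposition \ref{pro:ll} again with the two functions interchanged: since $H$ dominates $H_1$, the inclusion $L^{H}(\Om)\hookrightarrow L^{H_1}(\Om)$ holds together with a constant $C_2>0$ such that $\|u\|_{H_1}\le C_2\|u\|_{H}$ for all $u\in L^{H}(\Om)$.

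Combining the two inclusions shows that $L^H(\Om)$ and $L^{H_1}(\Om)$ coincide as sets of measurable functions, and combining the two norm inequalities gives
$$
C_1^{-1}\|u\|_{H}\le \|u\|_{H_1}\le C_2\|u\|_{H}\qquad\text{for all }u\in L^H(\Om)=L^{H_1}(\Om),
$$
so the identity map is a linear homeomorphism between $\big(L^H(\Om),\|\cdot\|_H\big)$ and $\big(L^{H_1}(\Om),\|\cdot\|_{H_1}\big)$; that is precisely the assertion that the two Orlicz spaces are isomorphic. (If one prefers to work with the Luxemburg norms instead of the Orlicz norms, Proposition \ref{proLH}(i) gives $\|\cdot\|_{(H)}\le\|\cdot\|_H\le 2\|\cdot\|_{(H)}$, so the equivalence of norms transfers with at most a factor $2$, and the same conclusion holds.)

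There is essentially no obstacle here: the proposition is a bookkeeping corollary of Proposition \ref{pro:ll}. The only point needing a word of care is the bounded-domain caveat --- when $|\Om|<\infty$ one only needs domination near infinity, which is exactly what equivalence near infinity supplies, so the hypothesis ``$H\sim H_1$'' (interpreted near infinity in the bounded case) is precisely strong enough to invoke Proposition \ref{pro:ll} in both directions. Hence nothing extra is required beyond quoting that proposition twice and assembling the inequalities.
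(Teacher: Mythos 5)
Your argument is correct: applying Proposition \ref{pro:ll} in both directions (using that equivalence means mutual domination, with domination near infinity sufficing since $\Om$ is bounded) yields the two set inclusions and the two-sided norm estimate, so the identity map is a linear homeomorphism. The paper states Proposition \ref{pro:iso} without proof, treating it as a standard consequence of the preceding proposition, and your derivation is exactly that standard argument, so nothing further is needed.
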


\begin{example}[Some useful facts]\label{example1}
Let $p>0$, $\alpha<p$, and 
$$a(t):=\frac{t^p}{(\ln (e+t))^\alpha},\ t\geq 0.$$
Then, we have
\begin{enumerate}
\item[\rm(i)] 
$ A(s)\sim s^{p+1}(\ln s)^{-\alpha}$,\\[-1.5mm]

\item[\rm (ii)] $\tilde{a}(s):=a^{-1}(s)\sim {s^\frac1p (\ln s)^\frac\alpha p}$,\\[-1.5mm]

\item[\rm (iii)] $\widetilde A(s)\sim {s^{\frac{p +1}p}(\ln s)^\frac\alpha p}$,\\[-1.5mm]

\item[\rm (iv)] 
$\widetilde A^{\, -1}(s)\sim  s^{\frac p{p +1}}(\ln s)^{-\frac\alpha {p+1}}$. 
\end{enumerate}
See Lemma \ref{lem:tilde:aA}.
\end{example}

\subsection{The  Orlicz-Sobolev spaces   $W_0^{1, H} (\Om)$  and $W^{m,H} (\Om)$.  
}
Let $H$ be an $\mathcal N$-function and let $m\in \mathbb{N}^*$.
\begin{defi}\label{def:O-S}
\begin{enumerate}
\item[\rm(i)] The space  $W^{m,H} (\Om)$ is defined as 
$$W^{m,H} (\Om):=\big\{u:\Om\to \mathbb{R}\, :   D^\alpha u \in L^H (\Om) \quad \forall \, |\alpha|\in \{0,1,, \dots, m\}\big\}$$
where $D^\alpha  $ stands for the weak partial derivative of $u$. 
\item[\rm (ii)] The Luxemburg norm for the elements $u$ of this space is 
$$\|u\|_{m,(H) }:= \max_{0\leq |\alpha |\leq m}\|D^{\alpha} u\|_{(H)}$$
and $\big( W^{m,H} (\Om), \|\cdot\|_{m,(H) }\big)$  is a Banach space.
\item[\rm (iii)] The space  $W_0^{1,H} (\Om)$ is defined as 
$$W_0^{1,H} (\Om):=\overline{C_0^\infty(\Om)}^{\|\cdot\|_{1,(H )}}.$$
\item[\rm (iv)]   The Orlicz norm for $u  \in W_0^{1,H} (\Om)$ is defined by 
$$\|u\|_{1,H}:= \sup\left\{\int_\Om uv\, \, dx \, : v\in W_0^{1,H} (\Om), \;  \|v\|_{1,(H)}\leq 1\right\}.$$
The Orlicz norm is equivalent to the Luxemburg norm, see Proposition \ref{proLH}(i).
\end{enumerate}
\end{defi}
\medskip

\subsection{Optimal Embedding Theorems for Orlicz-Sobolev spaces}

We present in this section some known results concerning 
the  {\it optimal Orlicz--Sobolev  embedding}.  For that purpose,  we define for any   any ${\mathcal N}$-function $H$, the {\it auxiliary  function} 
\begin{equation}\label{def:HN:PhiN}
\Phi_H (s):=\int_0^s  \frac{\widetilde{H}(\tau)}{\tau^{1+N'}} \,d\tau
\end{equation} 
where   $N'=\frac{N}{N-1}$.   We will denote by $\to$ a continuous  embedding, and by $\hookto$ a compact embedding.
\begin{rem}\label{rem:H:0}
We can always assume that $\Phi_H$ is well defined. 
Indeed,  since $\Om$ is of finite measure (it is  bounded), we can 
assume without loss of generality that 
$$\int_0 \frac{\tilde H (\tau)}{\tau^{1+N'}} d\tau <\infty$$  otherwise $H$ can be replaced by any ${\mathcal N}$-function which is equivalent to
the original one near infinity and makes the previous integral converge. Such
a replacement does not affect the result since the corresponding Orlicz-Sobolev
norm is equivalent to the original one. 
\end{rem}
It turns out that one finds different  optimal embeddings depending on whatever 
\begin{equation}\label{iH}i_H:=\int^\infty \frac{\widetilde{H}(\tau)}{\tau^{1+N'}} d\tau\end{equation}
is finite or not.  Notice that if $i_H=+\infty$, in particular $\Phi_H$ is strictly increasing (at  least for $s$ large) and $\Phi_H^{-1} $ is well defined.
Consequently we distinguish two cases :\\

\noindent {\bf Case 1 : $i_H<+\infty$.} The following Theorem is a compact Orlicz--Sobolev  embedding for the spaces $W_0^{1,H}(\Om )$ and $W^{1,H}(\Om)$, in terms of continuous bounded functions, see \cite{Donaldson-Trudinger}, and \cite[Corollary 1]{Cianchi_ASNSP_1996}.  Recall that a bounded open set $\Om$ is called {\it strongly Lipschitz} if,
for each $x \in \p\Om$, there exist a neighbourhood $U_x$ of $x$, a coordinate system
$(y_1, ... , y_N)$ centered at $x$ and a Lipschitz continuous function $\phi$ of $(y_1, . . . , y_{N-1})$ such that
\begin{equation*}
\Om\cap U_x=\{(y_1, ... , y_N):\ y_N>\phi(y_1, . . . , y_{N-1})\}.    
\end{equation*}

\bigskip

\begin{thm}\label{th:cont:emb:cont:comp}
Let $\Om\subset  \R^N$, $N\ge 2$ be a bounded open set. 
Let $H$ be an $\mathcal N$-function   satisfying $i_H<+\infty$, where $i_H$ is defined in \eqref{iH}.   Let  $C_b(\Om ) $ be  the space of continuous bounded functions on $\Om$. 

\begin{enumerate}
\item Then, the embedding
$$	
W^{1,H}_0(\Om) \hookto C_b(\Om ) 
$$ 
is compact.
\item If in addition $\Om$ has the strong Lipschitz property then,   the embedding 
$$
W^{1,H}(\Om) \hookto C_b(\Om )
$$
is compact. 

\end{enumerate}
\end{thm}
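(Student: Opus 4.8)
\emph{Proof strategy.} Since this is a classical result (see \cite{Donaldson-Trudinger} and \cite{Cianchi_ASNSP_1996}), I only indicate how I would prove it. The plan is first to reduce statement (2) to statement (1): using the strong Lipschitz property one builds, by reflection across Lipschitz graphs together with a partition of unity subordinate to a finite boundary cover and a cut-off, a bounded linear extension operator $E\colon W^{1,H}(\Om)\to W^{1,H}_0(B)$ for a fixed ball $B\supset\supset\Om$; boundedness on the Orlicz--Sobolev scale is checked as in the classical Sobolev case from the definitions of Section~\ref{sec:orl}. Then a bounded set of $W^{1,H}(\Om)$ maps under $E$ into a bounded set of $W^{1,H}_0(B)$, whose image in $C_b(B)$ is relatively compact by (1), and restriction to $\Om$ finishes (2). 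So it remains to prove (1).

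\emph{Continuous embedding into $L^\infty$.} For $u\in C_0^\infty(\Om)$, extended by $0$ to $\R^N$, the representation $u(x)=-\frac{1}{N\omega_N}\int_{\R^N}\frac{(x-y)\cdot\na u(y)}{|x-y|^N}\,dy$ gives $|u(x)|\le c_N\int_\Om|x-y|^{-(N-1)}|\na u(y)|\,dy$, and the first Hölder inequality in Orlicz spaces, Proposition~\ref{pro:H*}{\rm (iv)}, bounds this by $2c_N\,\big\|\,|x-\cdot|^{-(N-1)}\big\|_{(\widetilde H)}\,\|\na u\|_{(H)}$. A change of variables $\tau=\lambda^{-1}|x-y|^{-(N-1)}$ gives, for every $x\in\Om$ and $\lambda>0$,
\[
\int_\Om\widetilde H\!\left(\frac{|x-y|^{-(N-1)}}{\lambda}\right)dy\ \le\ \frac{\omega_{N-1}}{(N-1)\,\lambda^{N'}}\int_0^\infty\frac{\widetilde H(\tau)}{\tau^{1+N'}}\,d\tau ,
\]
the last integral being finite by $i_H<+\infty$ (see \eqref{iH}) together with Remark~\ref{rem:H:0}, which makes it converge near $0$ as well. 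Hence $\big\|\,|x-\cdot|^{-(N-1)}\big\|_{(\widetilde H)}$ is bounded \emph{uniformly in $x\in\Om$}, so $\|u\|_\infty\le C\,\|u\|_{1,(H)}$; by density of $C_0^\infty(\Om)$ in $W^{1,H}_0(\Om)$ this passes to every $u\in W^{1,H}_0(\Om)$, which then admits a continuous representative.

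\emph{Compactness via Arzelà--Ascoli.} It remains to show that bounded subsets of $W^{1,H}_0(\Om)$ are equicontinuous. Given $x_1,x_2\in\R^N$ with $\delta=|x_1-x_2|$, set $B=B_{2\delta}(x_1)$, so $x_1,x_2\in B$; the standard potential estimate $|u(z)-u_B|\le C_N\int_B|z-\zeta|^{-(N-1)}|\na u(\zeta)|\,d\zeta$ for $z\in B$ yields
\[
|u(x_1)-u(x_2)|\le C_N\int_B\Big(|x_1-\zeta|^{-(N-1)}+|x_2-\zeta|^{-(N-1)}\Big)|\na u(\zeta)|\,d\zeta\le C\,\omega(\delta)\,\|\na u\|_{(H)} ,
\]
where $\omega(\delta):=\sup_{z\in\R^N}\big\|\,|z-\cdot|^{-(N-1)}\big\|_{(\widetilde H),\,B_{2\delta}(z)}$, the last inequality again by Hölder in Orlicz spaces. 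The same change of variables shows $\int_{B_{2\delta}(z)}\widetilde H\big(|z-y|^{-(N-1)}/\lambda\big)\,dy\to0$ as $\delta\to0$ for each fixed $\lambda>0$, being a tail of the convergent integral $i_H$; hence $\omega(\delta)\to0$ as $\delta\to0$. Thus a bounded subset of $W^{1,H}_0(\Om)$ is uniformly bounded and has a uniform modulus of continuity, so by Arzelà--Ascoli it is relatively compact in $C_b(\Om)$; equivalently, $W^{1,H}_0(\Om)$ embeds continuously into $C^{0,\omega}(\Omb)$, which embeds compactly into $C_b(\Om)$.

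The delicate point is this last step: one must check quantitatively that the small-scale quantity $\omega(\delta)$ vanishes as $\delta\to0$, uniformly over bounded sets of $W^{1,H}_0(\Om)$, and this is exactly where $i_H<+\infty$ is used --- it is the Orlicz--Sobolev analogue of the hypothesis $p>N$ in Morrey's theorem. The sharp form of the modulus $\omega$, as well as the construction of the extension operator in the Lipschitz case, are carried out in \cite{Donaldson-Trudinger, Cianchi_ASNSP_1996}.
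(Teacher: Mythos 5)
Your sketch is correct and is essentially the classical Donaldson--Trudinger/Cianchi argument (Riesz-potential representation plus Orlicz--H\"older, the change of variables turning $\|\,|x-\cdot|^{-(N-1)}\|_{(\widetilde H)}$ into the integral $i_H$, Arzel\`a--Ascoli via the vanishing tail $\omega(\delta)$, and an extension operator for the Lipschitz case), which is exactly the route of the references the paper cites for this theorem without reproving it. No gaps beyond the standard details you explicitly defer to \cite{Donaldson-Trudinger, Cianchi_ASNSP_1996}.
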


\noindent {\bf Case 2 : $i_H=+\infty$}. 
The following theorem gives the {\it optimal Orlicz--Sobolev  
continuous embedding} for the space $W^{1,H}_0(\Om )$, in terms of Orlicz spaces, see \cite[Theorem 1]{Cianchi_IUMJ_1996}.\\

\begin{thm}\label{th:cont:emb:0}
Let $N\ge 2$. Let $H$ be any ${\mathcal N}$-function and let  $\Phi_H$ be the auxiliary function defined in \eqref{def:HN:PhiN}
Assume that $i_H=+\infty$, where $i_H$ is defined in \eqref{iH}. Define for all $s\geq 0$
\begin{equation}\label{def:HN}
H^*(s) =H_N^*(s) = \int_0^s t^{N'-1}\left(\Phi_H ^{-1}\big(t^{N'}\big)\right)^{N'} dt 
\end{equation}   
Then $H^*$  is an ${\mathcal N}$-function,  and the following continuous embedding holds:
\begin{equation}\label{ineq:cont:emb:0}
W_0^{1,H} (\Om)\to L^{H^*} (\Om).
\end{equation}
Furthermore $L^{H^*}(\Om)$ is the smallest Orlicz space that renders \eqref{ineq:cont:emb:0} true.
\end{thm}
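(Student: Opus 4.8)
The plan is to reduce the $N$-dimensional embedding to a one-dimensional weighted Hardy inequality by symmetrization, and then to recognize the function $H^*$ of \eqref{def:HN} as precisely the $\mathcal N$-function that renders that Hardy inequality sharp. First I would prove a P\'olya--Szeg\H{o}-type principle in the Orlicz setting: if $u\in W_0^{1,H}(\Om)$ and $u^\star$ denotes its spherically symmetric decreasing rearrangement on the ball $B$ of volume $|\Om|$, then $\|\nabla u^\star\|_{(H)}\le \|\nabla u\|_{(H)}$. Combined with $\|u\|_{(H^*)}=\|u^\star\|_{(H^*)}$, this reduces matters to radial functions, i.e. to one variable. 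By the coarea formula and the isoperimetric inequality one then produces, for each $u$ with $\|\nabla u\|_{(H)}\le 1$, a nonnegative $\phi$ on $(0,L)$, $L:=|\Om|$, with $\int_0^L H(\phi(r))\,dr\le 1$ and
$$u^*(s)\ \le\ \frac{1}{N\omega_N^{1/N}}\int_s^L \phi(r)\,r^{1/N-1}\,dr\qquad\text{for all }s\in(0,L),$$
where $u^*$ is the decreasing rearrangement of $u$. Since $1/N-1=-1/N'$ with $N'=N/(N-1)$, the right-hand side is the weighted Hardy operator $\phi\mapsto \psi(s):=\int_s^L\phi(r)\,r^{-1/N'}\,dr$.

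It therefore suffices to prove the one-dimensional statement: there is $c>0$ such that $\int_0^L H(\phi)\,dr\le 1$ implies $\int_0^L H^*\!\big(\psi(s)/c\big)\,ds\le 1$. This is where \eqref{def:HN} and \eqref{def:HN:PhiN} enter. After a change of variable and an integration by parts the Hardy operator becomes a convolution-type operator whose sharp Orlicz target is governed by the growth of $\widetilde H$; the Young inequality $\phi\,\eta\le H(\phi)+\widetilde H(\eta)$, used with $\eta$ an optimally chosen dual profile, produces exactly the primitive $\Phi_H$ of $\widetilde H(\tau)/\tau^{1+N'}$ and, upon inverting, the integrand $t^{N'-1}\big(\Phi_H^{-1}(t^{N'})\big)^{N'}$ of $H^*$. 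The hypothesis $i_H=+\infty$ (see \eqref{iH}) is exactly what guarantees that $\Phi_H$ is eventually strictly increasing onto a half-line, so $\Phi_H^{-1}$ is well defined; it is also the dividing line with the case $i_H<+\infty$ of Theorem \ref{th:cont:emb:cont:comp}.

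Checking that $H^*$ is an $\mathcal N$-function is then routine: its integrand is nonnegative, continuous and increasing, whence $H^*$ is convex and $C^1$ with $H^*(s)=0$ iff $s=0$, while $H^*(s)/s\to 0$ as $s\to 0^+$ and $H^*(s)/s\to+\infty$ as $s\to+\infty$ follow from the corresponding behaviour of $\Phi_H^{-1}$, which in turn comes from $\widetilde H$ being an $\mathcal N$-function. For the optimality statement I would argue by contradiction: if $L^G(\Om)$ is an Orlicz space with $W_0^{1,H}(\Om)\to L^G(\Om)$ but $H^*$ does not dominate $G$ near infinity, then, choosing $\phi$ on a sequence of dyadic scales so as to nearly saturate $\int_0^L H(\phi)\le 1$ and letting $u$ be the radial function with $u^*(s)=\int_s^L\phi(r)\,r^{-1/N'}\,dr$ (supported in a ball contained in $\Om$, extended by zero), one obtains $u\in W_0^{1,H}(\Om)$ with $\|u\|_{(G)}=+\infty$, contradicting the embedding; by Proposition \ref{pro:ll} this forces $L^{H^*}(\Om)\subset L^G(\Om)$, so $L^{H^*}$ is the smallest admissible target.

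The main obstacle is the sharp one-dimensional weighted Hardy inequality in Orlicz spaces together with its matching extremal construction: establishing both the boundedness $L^H\to L^{H^*}$ with the explicit formula \eqref{def:HN} and the impossibility of any strictly smaller target requires the delicate duality and Young-inequality bookkeeping sketched above. The P\'olya--Szeg\H{o} reduction, though classical for power-type nonlinearities, also demands some care for a general convex $H$.
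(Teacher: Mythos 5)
This theorem is not proved in the paper at all: it is quoted verbatim from \cite[Theorem 1]{Cianchi_IUMJ_1996}, so there is no in-paper argument to compare against. Your outline does reconstruct the strategy of Cianchi's original proof: reduction to one dimension via rearrangement, the pointwise bound
\begin{equation*}
u^*(s)\ \le\ \frac{1}{N\omega_N^{1/N}}\int_s^{|\Om|}\phi(r)\,r^{-1/N'}\,dr
\end{equation*}
coming from the coarea formula and the isoperimetric inequality, a sharp one-dimensional Hardy-type inequality in Orlicz norms whose optimal target is $L^{H^*}$, and an extremal radial construction for the optimality claim. So the roadmap is the right one and matches the source.

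As a proof, however, the proposal has a genuine gap exactly where you place the ``main obstacle'': the one-dimensional assertion that $\int_0^L H(\phi)\,dr\le 1$ forces $\int_0^L H^*\big(\psi(s)/c\big)\,ds\le 1$, with $\psi(s)=\int_s^L\phi(r)\,r^{-1/N'}\,dr$, \emph{is} the analytic content of the theorem, and it is only described (``a change of variable and an integration by parts \dots the Young inequality \dots produces exactly the primitive $\Phi_H$'') rather than established. Nothing in the sketch actually verifies why the composition $t\mapsto t^{N'-1}\big(\Phi_H^{-1}(t^{N'})\big)^{N'}$ is the correct integrand in \eqref{def:HN}, nor how the constant $c$ is made uniform over all admissible $\phi$; this is where the real work (duality, the choice of the extremal dual profile $\eta$, and the role of the divergence $i_H=+\infty$) lives. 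Likewise, the optimality argument must produce, for \emph{every} ${\mathcal N}$-function $G$ not dominated by $H^*$ near infinity, a function in $W_0^{1,H}(\Om)$ outside $L^G(\Om)$; your dyadic construction is plausible but is asserted, not carried out. A minor remark: the P\'olya--Szeg\H{o} reduction you invoke is correct in the Orlicz setting (apply the integral form to $H(\cdot/\lambda)$ for each $\lambda$), but it is redundant here, since the coarea/isoperimetric estimate on $u^*$ that you also state already yields the reduction without any symmetrization of the gradient. In short: right strategy, but the theorem's core inequality and the optimality construction remain unproven in your write-up, so the proposal does not stand as a complete proof.
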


\bigskip

The following theorem is a also a continuous Orlicz--Sobolev embedding, this time for the space $W^{1,H}(\Om )$, in terms of Orlicz spaces, see \cite[Theorem 2]{Cianchi_IUMJ_1996}. We will say that	$\Om\subset \R^N$  satisfies the {\it cone property}, if there exists a cone $\Sigma$ such that for any $x\in\Om$, the set $\Om$ contains a cone congruent with $\Sigma$ and whose vertex is $x$.

\begin{thm}\label{th:cont:emb}
Let $N\ge 2$.  Assume that  $\Om\subset \R^N$ is any open bounded  connected set, satisfying the cone property.
Let $H$ be any $\mathcal N$-function, and let $H^*$ be the function defined by \eqref{def:HN}.  Assume that $i_H=+\infty$, where $i_H$ is defined in \eqref{iH}.
Then, the following holds:

\begin{enumerate}
\item[\rm (i)]  There exists a constant $C$, depending only on $H$, $|\Om|$ and $N$,  such that  
\begin{equation*}
\|u -u_\Om\|_{L^{H^*}(\Om)} \le C\|\nabla u\|_{(H)}\qquad \text{for all } u \in W^{1,H}(\Om). 
\end{equation*}
Here, 
$$
u_\Om := \frac{1}{|\Om|} \int_\Om u(x)\, \, dx
$$ 
is the mean value of $u$ over $\Om$.  	

\item[\rm (ii)] 	The continuous embedding 
\begin{equation*}
W^{1,H}(\Om) \to L^{H^*} (\Om )
\end{equation*}
holds, where possibly $H^*$ is replaced at zero, in the sens of Remark \ref{rem:H:0}.
\end{enumerate}
\end{thm}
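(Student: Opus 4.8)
The plan is to prove (i) first and then deduce (ii) from it. For (i) I would combine a pointwise integral representation of $u-u_\Om$ — available because $\Om$ satisfies the cone property — with the boundedness of the Riesz potential of order one from $L^H(\Om)$ into $L^{H^*}(\Om)$; this second ingredient is essentially the analytic core already present in the proof of Theorem~\ref{th:cont:emb:0}, now applied on a domain rather than on a function extended by zero.

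\emph{Step 1: a representation formula.} Since $\Om$ is bounded and has the cone property, it can be written as a finite union $\Om=\bigcup_{i=1}^m\Om_i$ of bounded open sets, each $\Om_i$ star-shaped with respect to an open ball $B_i\subset\Om_i$ (a classical consequence of the cone property, cf.\ \cite{Adams}). On each $\Om_i$ the standard Sobolev argument — integrate the fundamental theorem of calculus along the segments joining $x\in\Om_i$ to the points of $B_i$, then average over $B_i$ — gives, for $u\in W^{1,1}(\Om)$ (a class containing $W^{1,H}(\Om)$ because $\Om$ is bounded, so $L^H(\Om)\hookrightarrow L^1(\Om)$),
$$
|u(x)-u_{B_i}|\le c\int_{\Om_i}\frac{|\na u(y)|}{|x-y|^{N-1}}\,dy\qquad\text{for a.e. }x\in\Om_i .
$$
Using that $\Om$ is connected one chains these local estimates (comparing two of the constants $u_{B_i}$ at a time over overlapping pieces) and replaces each $u_{B_i}$ by the global mean $u_\Om$, obtaining
$$
|u(x)-u_\Om|\le C\int_\Om\frac{|\na u(y)|}{|x-y|^{N-1}}\,dy =: C\,I_1(|\na u|)(x)\qquad\text{for a.e. }x\in\Om .
$$

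\emph{Step 2: the Riesz potential in Orlicz spaces.} The heart of the matter is the bound
$$
\|I_1 f\|_{L^{H^*}(\Om)}\le C\,\|f\|_{(H)}\qquad\text{for all }f\in L^H(\Om),
$$
with $H^*$ the $\mathcal N$-function of \eqref{def:HN}. I would prove it by passing to decreasing rearrangements: a sharp rearrangement inequality for Riesz potentials (of O'Neil type) bounds $(I_1f)^*(t)$ by a Hardy-type average of $f^*$, and the resulting one-dimensional weighted inequality is then recast, through the Luxemburg norm, as a Hardy inequality in Orlicz spaces. This last step is exactly where the auxiliary function $\Phi_H$ of \eqref{def:HN:PhiN} and the hypothesis $i_H=+\infty$ enter — the latter guarantees that $\Phi_H^{-1}$ is well defined and that $H^*$ is a genuine $\mathcal N$-function, only its behaviour at infinity being pinned down, whence the possible modification near $0$ of Remark~\ref{rem:H:0}. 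As this is precisely the estimate underlying Theorem~\ref{th:cont:emb:0}, one may simply quote it.

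\emph{Step 3: conclusion.} Combining Steps 1 and 2,
$$
\|u-u_\Om\|_{L^{H^*}(\Om)}\le C\,\|I_1(|\na u|)\|_{L^{H^*}(\Om)}\le C\,\|\na u\|_{(H)},
$$
which is (i); to make the constant depend on $\Om$ only through $|\Om|$, one inserts, as the last step of Cianchi's argument, a Schwarz symmetrization comparing $\Om$ with the ball of the same measure. For (ii), since $L^H(\Om)\hookrightarrow L^1(\Om)$ on the bounded set $\Om$ one has $|u_\Om|\le|\Om|^{-1}\|u\|_{L^1}\le C\|u\|_{(H)}$, and constants belong to every Orlicz space over a bounded domain, so $\|u_\Om\|_{L^{H^*}(\Om)}\le C\|u\|_{(H)}$; therefore
$$
\|u\|_{L^{H^*}(\Om)}\le\|u-u_\Om\|_{L^{H^*}(\Om)}+\|u_\Om\|_{L^{H^*}(\Om)}\le C\big(\|\na u\|_{(H)}+\|u\|_{(H)}\big)=C\,\|u\|_{1,(H)} .
$$
The main obstacle is Step 2: the sharp mapping property $I_1:L^H\to L^{H^*}$ with the \emph{optimal} target $H^*$ is the technical core of Cianchi's embedding theorem and rests on the rearrangement estimate together with a Hardy inequality in Orlicz spaces tailored to $\Phi_H$; by contrast, the passage from the Dirichlet setting (Theorem~\ref{th:cont:emb:0}) to the present Neumann one — Steps 1 and 3 — is routine.
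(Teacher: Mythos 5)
The paper does not prove this statement at all: it is imported verbatim from the literature (it is \cite[Theorem 2]{Cianchi_IUMJ_1996}), so there is no internal proof to compare against. Your outline is a legitimate and classical route to such a result --- the Sobolev integral representation on a finite union of star-shaped pieces, the sharp mapping property of the Riesz potential $I_1:L^H(\Om)\to L^{H^*}(\Om)$ obtained from O'Neil's rearrangement inequality plus an Orlicz--Hardy inequality built on $\Phi_H$, and a triangle-inequality step to pass from the oscillation bound (i) to the full embedding (ii). It is worth noting, however, that it is not the route Cianchi actually takes: his proof of the $W^{1,H}_0$ case (Theorem \ref{th:cont:emb:0}) rests on P\'olya--Szeg\H{o} symmetrization and a one-dimensional reduction, not on the Riesz potential, and the $W^{1,H}$ case is then handled via a pseudo-rearrangement/relative isoperimetric argument on cone-property domains rather than by chaining local representation formulas. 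So your Step 2, while true, is an independent nontrivial theorem (the optimal Orlicz target for $I_1$) that you would have to import separately rather than ``quote'' from the proof of Theorem \ref{th:cont:emb:0}. What your approach buys is transparency and modularity (it works verbatim once the potential estimate is granted); what the symmetrization approach buys is the optimality assertion at the end of Theorem \ref{th:cont:emb:0} and cleaner constants.

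Two small caveats. First, chaining the local estimates over the overlapping star-shaped pieces makes the constant depend on the geometry of the covering (hence on the cone and the diameter), not merely on $|\Om|$; your proposed fix by Schwarz symmetrization does not apply to the sign-changing function $u-u_\Om$ on a domain where $u$ has no boundary condition, so the dependence ``only on $H$, $|\Om|$ and $N$'' as literally stated should be read as dependence on $\Om$ through its cone parameters as well. Second, in (ii) the constant function $u_\Om$ lies in $L^{H^*}(\Om)$ only after the modification of $H^*$ near zero discussed in Remark \ref{rem:H:0} (this is precisely why the theorem includes that proviso); your Step 3 should invoke it explicitly. Neither point invalidates the strategy.
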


\bigskip

Finally we give in  Theorem \ref{th:comp:emb} below a compact Orlicz--Sobolev  embedding for the space $W^{1,H}(\Om )$, this time in terms of Orlicz spaces, see \cite[Theorem 3]{Cianchi_IUMJ_1996}.		 

\bigskip

\begin{thm}\label{th:comp:emb}
Let $N\ge 2$ and let $\Om$ be any open, bounded, connected set, and satisfying the cone property. Let $H$ be any ${\mathcal N}$- function.   Assume that $i_H=+\infty$, where $i_H$ is defined in \eqref{iH} and   $H_1$ a ${\mathcal N}$- function increasing  essentially more slowly near infinity than $H^*$ defined by \eqref{def:HN}. Then the embedding 
\begin{equation*}
W^{1,H}(\Om)\hookto L^{H_1}(\Om) 
\end{equation*} 
is compact. 
\end{thm}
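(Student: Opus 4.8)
The plan is to factor the claimed compact embedding through the \emph{optimal continuous} Orlicz--Sobolev embedding into $L^{H^*}(\Om)$ furnished by Theorem \ref{th:cont:emb}, to combine it with the classical Rellich--Kondrachov compactness in $L^1(\Om)$ in order to extract an a.e.\ convergent subsequence, and then to upgrade that convergence to convergence in the Luxemburg norm of $L^{H_1}(\Om)$ by a uniform integrability argument driven by the hypothesis $H_1\prec\hspace{-1.5mm}\prec H^*$. (Observe that $H_1\prec\hspace{-1.5mm}\prec H^*$ implies $H_1\prec H^*$, so by Proposition \ref{pro:ll} the continuous chain $W^{1,H}(\Om)\to L^{H^*}(\Om)\to L^{H_1}(\Om)$ already holds, and only compactness is in question.)

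First I would record two a priori embeddings. On one hand, since $i_H=+\infty$, Theorem \ref{th:cont:emb} gives the continuous embedding $W^{1,H}(\Om)\to L^{H^*}(\Om)$ with $H^*$ defined by \eqref{def:HN}. On the other hand, because $H$ is an $\mathcal N$-function we have $H(t)/t\to+\infty$, hence $t\le t_0+H(t)$ for all $t\ge0$ and some $t_0>0$; since $|\Om|<\infty$, this yields the continuous inclusion $L^H(\Om)\subset L^1(\Om)$, whence $W^{1,H}(\Om)\subset W^{1,1}(\Om)$ continuously, and the Rellich--Kondrachov theorem (valid since $\Om$ is bounded, connected and has the cone property) provides the compact embedding $W^{1,1}(\Om)\hookto L^1(\Om)$.

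Now let $\{u_n\}\subset W^{1,H}(\Om)$ be bounded. By the previous step it is bounded in $L^{H^*}(\Om)$, say $\|u_n\|_{(H^*)}\le M_1$, and in $W^{1,1}(\Om)$; hence, along a subsequence (not relabelled), $u_n\to u$ in $L^1(\Om)$ and a.e.\ in $\Om$. By Lemma \ref{Ban}(ii) and Fatou's lemma, $\int_\Om H^*(u/M_1)\,dx\le1$, so $u\in L^{H^*}(\Om)$ with $\|u\|_{(H^*)}\le M_1$; set $v_n:=u_n-u$, so $v_n\to0$ a.e.\ and $M_0:=\sup_n\|v_n\|_{(H^*)}\le 2M_1$, whence $\int_\Om H^*(v_n/M_0)\,dx\le1$ for every $n$ by Lemma \ref{Ban}(ii). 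It then suffices to show that $\int_\Om H_1(\mu v_n)\,dx\to0$ for every $\mu>0$, for this forces $\|v_n\|_{(H_1)}\to0$, i.e.\ $u_n\to u$ in $L^{H_1}(\Om)$ (along the subsequence), which is exactly the asserted compactness. Fix $\mu>0$ and $\e\in(0,1)$. Since $H^*$ is convex with $H^*(0)=0$, one has $H^*(\eta s)\le\eta H^*(s)$ for $\eta\in(0,1)$; combining this with the definition of $\prec\hspace{-1.5mm}\prec$ (Definition \ref{def:dom}(iii)) applied with the constant $c:=\e/(2\mu M_0)$, we obtain some $R>0$ such that
$$
H_1(\mu t)\ \le\ H^*\!\Big(\tfrac{\e}{2}\cdot\tfrac{t}{M_0}\Big)\ \le\ \tfrac{\e}{2}\,H^*\!\Big(\tfrac{t}{M_0}\Big)\qquad\text{for all } t\ge R .
$$
Consequently $\int_{\{|v_n|>R\}}H_1(\mu v_n)\,dx\le\tfrac{\e}{2}\int_\Om H^*(v_n/M_0)\,dx\le\tfrac{\e}{2}$ uniformly in $n$, while $\int_{\{|v_n|\le R\}}H_1(\mu v_n)\,dx\to0$ by dominated convergence (the integrand is dominated by $H_1(\mu R)\in L^1(\Om)$ and tends to $0$ pointwise). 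Hence $\limsup_n\int_\Om H_1(\mu v_n)\,dx\le\e$, and letting $\e\to0$ concludes.

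I expect the main obstacle to be this last step: passing from a.e.\ (and $L^1$) convergence to convergence in the Luxemburg norm of $L^{H_1}$ requires the uniform integrability of the family $\{H_1(\mu v_n)\}$, and this is precisely where the essential-growth gap $H_1\prec\hspace{-1.5mm}\prec H^*$ must be used --- here through the interplay of the defining property of $\prec\hspace{-1.5mm}\prec$ with the convexity of $H^*$ (equivalently, through a de la Vall\'ee--Poussin type criterion). The two preliminary embeddings and the extraction of the a.e.\ limit are routine.
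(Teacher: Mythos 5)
Your argument is correct. Note first that the paper does not prove this statement at all: it is quoted verbatim from Cianchi (\cite[Theorem 3]{Cianchi_IUMJ_1996}), so there is no internal proof to compare against. Your proposal supplies a complete, self-contained proof along the standard lines, and it is essentially the argument used in the cited source: factor through the optimal continuous embedding $W^{1,H}(\Om)\to L^{H^*}(\Om)$ of Theorem \ref{th:cont:emb}, extract an a.e.\ convergent subsequence via $W^{1,H}(\Om)\subset W^{1,1}(\Om)\hookto L^1(\Om)$ (Rellich--Kondrachov is applicable here because $\Om$ is bounded with the cone property), and then convert a.e.\ convergence plus the uniform modular bound $\int_\Om H^*(v_n/M_0)\,dx\le 1$ into Luxemburg-norm convergence in $L^{H_1}$ by a Vitali/de la Vall\'ee--Poussin argument. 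The decisive step --- splitting over $\{|v_n|>R\}$ and $\{|v_n|\le R\}$, and using Definition \ref{def:dom}(iii) with $c=\e/(2\mu M_0)$ together with the convexity inequality $H^*(\eta s)\le \eta H^*(s)$ for $\eta\in(0,1)$ to make the tail uniformly small --- is exactly where the hypothesis $H_1\prec\hspace{-1.5mm}\prec H^*$ enters, and you execute it correctly; the reduction from ``$\int_\Om H_1(\mu v_n)\,dx\to 0$ for every $\mu>0$'' to $\|v_n\|_{(H_1)}\to 0$ is also sound by the definition \eqref{def:L-norm} of the Luxemburg norm. The only cosmetic remark is that the continuity of $L^H(\Om)\subset L^1(\Om)$ is most cleanly justified by H\"older's inequality (Proposition \ref{pro:H*}(iv)) with the second factor $\|1\|_{(\tilde H)}<\infty$ on a bounded domain, rather than by the pointwise bound $t\le t_0+H(t)$ alone, which gives the inclusion but not immediately the norm estimate.
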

\bigskip 

\begin{example} Some useful continuous and compact embeddings \label{ex:HN:log}$ $

Consider any ${\mathcal N}$-function $H$ such that 
$$ H(s)\sim s^p \big[\log(s)\big]^\al$$ where either $
p > 1 $ and $\al\in\R $ or $p=1  $ and $ \al> 0.$

Let $\Om\subset\R^N$ be an bounded open set.
\medskip 

\noindent {\bf  Case 1. } 
The following compact  embedding holds :
$$
W^{1,H}_0(\Om) \hookto C_b(\Om )\qq{if  either}
\begin{cases}
p > N & \qq{and} \al\in\R,\\
\text{or}\quad p=N  & \qq{and} \al>N-1.
\end{cases}
$$

\noindent {\bf Case 2. } The following   continuous embedding holds :
$$
W^{1,H}_0(\Om)  \to L^{H^*}(\Om ) \qq{if} p\le N, 
$$ 
see Theorem \ref{th:cont:emb:0}, where 
\begin{equation}\label{def:HN:log}
H^* (s)\sim
\begin{cases}
\Big(s^{p} \big[\log(s)\big]^\al\Big)^{\frac{N}{N-p}} & \qq{if} 1 \le  p <  N,\\[2mm]
e^{s^{N/(N-1-\al)}} & \qq{if} p = N,\ \al<N-1,\\[2mm]
e^{e^{s^{N'}}} & \qq{if} p = N,\ \al=N-1. 
\end{cases}
\end{equation}

By Theorem \ref{th:cont:emb}, the same embeddings are true with $W^{1,H}(\Om)$ replacing $W^{1,H}_0(\Om)$, provided that $\Om\subset \R^N$ has finite measure and satisfies the cone property. 

\medskip

Moreover, if $p\le N,$ since Theorem \ref{th:comp:emb}
$$
W^{1,H}(\Om) \hookto L^{H_1}(\Om ) \qq{for any} H_1  \prec\hspace{-1.5mm}\prec  H^*.
$$
Notice that if $\al = 0$,  \eqref{def:HN:log} agrees with Sobolev's theorem  when $p\ne N$, and with Trudinger's theorem when $p= N$.
\end{example}

\begin{example}
Consider a ${\mathcal N}$-functions $\ H(s)\ $ which are equivalent near infinity to 
$$
s^p \Big(\log\big[\log(s)\big]\Big)^\al, \qq{where either} \begin{cases}
p > 1 & \text{ and } \al\in\R,\\
\text{or}\quad p=1  & \text{ and }  \al> 0. 
\end{cases}
$$
Then, Theorem \ref{th:cont:emb:0},  
and Theorem \ref{th:cont:emb:cont:comp} imply that 
$$
W^{1,H}(\Om) \hookto C_b(\Om ) \qq{if} 
p > N,
$$  
and 
$$
W^{1,H}(\Om) \to L^{H^*}(\Om ) \qq{if} p\le N,
$$ 
where
$$
H^* (s)\sim\begin{cases}
\Big(s^{p} \big[\log\log(s)\big]^\al\Big)^{\frac{N}{N-p}}  & \qq{if} 1 \le  p <  N, \\[3mm]
e^{\left(s^{N}\,[\log(s)]^\al \right)^{\frac{1}{N-1}}}\,  & \qq{if} p = N. 
\end{cases}
$$
\end{example}

\begin{thm}\label{th:emb}  Assume that  $\Om$ has the strong Lipschitz property.
Let $C_b^{\,1}(\Om)$ be the set of $C^1$ functions such that their derivatives are bounded in $L^\infty(\Om)$. Let $\tilde A$ the ${\mathcal N}$-function  defined in \eqref{def:tilde:A:B}.
The following Orlicz--Sobolev continuous embeddings hold :
\begin{equation}\label{OS:cont:emb} 
W^{2,{\tilde A}}(\Om)\subset 
\begin{cases}
L^{({\tilde A}^*)^*} (\Om)&\qq{if} 
\frac{p+1}{p}\le \frac{N}{2},\ 
\al\le \left(1-\frac{1}{N}\right)\big(Np-(p+1)\big)\\[2mm]
C_b (\Om)&\qq{if} \frac{p+1}{p}= \frac{N}{2},\ \al> \left(1-\frac{1}{N}\right)\big(Np-(p+1)\big),\\
&\quad \qq{or }  \frac{p+1}{p}> \frac{N}{2},\ \al\in\R;\\[2mm]
C_b^{\,1}(\Om)&\qq{if} \frac{p+1}{p}= N,\ \al> p(N-1),\quad\text{or } \frac{p+1}{p}> N,\ \al\in\R;
\end{cases}
\end{equation}
with $\big({\widetilde A}^*\big)^*$ defined by 
\begin{equation}\label{def:Atilde:N:N}
\big({\widetilde A}^*\big)^*\sim	\begin{cases}
s^\frac{N(p+1)}{Np-2(p+1)}\,  	\big[\log(s)\big]^\frac{\al N}{Np-2(p+1)} & \text{if } \frac{p+1}{p} <  \frac{N}{2}, \\[2mm]
e^{s^{N/\big(N-1-\al\frac{N}{Np-(p+1)}\big)}} & \text{if } \frac{p+1}{p} = \frac{N}{2},\ \al<\left(1-\frac{1}{N}\right)\big(Np-(p+1)\big)\\[3mm]
e^{e^{s^{N'}}} & \text{if } \frac{p+1}{p} = \frac{N}{2},\ \al=\left(1-\frac{1}{N}\right)\big(Np-(p+1)\big). 
\end{cases}
\end{equation}
Moreover,  the following Orlicz--Sobolev compact embeddings hold
\begin{equation}\label{OS:emb:comp} 
W^{2,{\tilde A}}(\Om)\hookto 
\begin{cases}
L^{A_1} (\Om)&\qquad \forall A_1\prec\hspace{-1.5mm}\prec\big({\widetilde A}^*\big)^*, \qq{if} \frac{p+1}{p}\le \frac{N}{2},\\ 
&\quad \qq{and}  \al\le\left(1-\frac{1}{N}\right)\big(Np-(p+1)\big);\\[2mm]
C_b (\Om)&\qq{if} \frac{p+1}{p}= \frac{N}{2},\ \al> \left(1-\frac{1}{N}\right)\big(Np-(p+1)\big),\\
&\quad \qq{or }  \frac{p+1}{p}> \frac{N}{2},\ \al\in\R;\\[2mm]
C_b^{\,1}(\Om)&\qq{if} \frac{p+1}{p}= N,\ \al> p(N-1),\quad\text{or } \frac{p+1}{p}> N,\ \al\in\R;
\end{cases}
\end{equation}
for any $A_1$ increasing essentially more slowly than $\big(({\widetilde A})^*\big)^*$, denoted by $A_1\prec\hspace{-1.5mm}\prec\big(({\widetilde A})^*\big)^*$, see definition \ref{def:dom}{\rm (iii)}.
\end{thm}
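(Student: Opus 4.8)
The plan is to derive the second--order embeddings of Theorem \ref{th:emb} by applying the first--order Orlicz--Sobolev embedding theorems recalled above --- Theorems \ref{th:cont:emb:0}, \ref{th:cont:emb}, \ref{th:cont:emb:cont:comp} and the compact version Theorem \ref{th:comp:emb} --- exactly twice. The mechanism is that if $u\in W^{2,\tilde A}(\Om)$ then $u\in W^{1,\tilde A}(\Om)$ and each $\p_i u\in W^{1,\tilde A}(\Om)$, so a first--order embedding simultaneously upgrades the integrability of $u$ and of $\nabla u$; one then repeats the argument with $\tilde A$ replaced by $\tilde A^*$. Throughout, $\tilde A(s)\sim s^{\frac{p+1}{p}}(\ln s)^{\frac{\al}{p}}$ by Lemma \ref{lem:tilde:aA} and Example \ref{ex:HN:log}, the strong Lipschitz property of $\Om$ provides the cone property required by these theorems, and Remark \ref{rem:H:0} ensures that the auxiliary functions $\Phi_{\tilde A}$, $\Phi_{\tilde A^*}$ and the conjugates $\tilde A^*$, $(\tilde A^*)^*$ are well defined near zero.

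\emph{First step.} If $\frac{p+1}{p}>N$, or $\frac{p+1}{p}=N$ and $\frac{\al}{p}>N-1$ (that is, $\al>p(N-1)$), then $i_{\tilde A}<+\infty$, so by Theorem \ref{th:cont:emb:cont:comp} the embedding $W^{1,\tilde A}(\Om)\hookto C_b(\Om)$ is compact; applying it to $u$ and to each $\p_i u$ yields at once $W^{2,\tilde A}(\Om)\hookto C_b^{\,1}(\Om)$, compactly and a fortiori continuously --- the last alternative in \eqref{OS:cont:emb} and \eqref{OS:emb:comp}. In the remaining cases $i_{\tilde A}=+\infty$, and Theorem \ref{th:cont:emb} gives the continuous embedding $W^{1,\tilde A}(\Om)\to L^{\tilde A^*}(\Om)$, where by Example \ref{ex:HN:log} the function $\tilde A^*$ is of power type,
\[
\tilde A^*(s)\sim s^{\,p_1}(\ln s)^{\,\al_1},\qquad p_1:=\frac{N(p+1)}{Np-(p+1)},\quad \al_1:=\frac{\al N}{Np-(p+1)},
\]
when $\frac{p+1}{p}<N$, and of exponential type when $\frac{p+1}{p}=N$ (with $\al\le p(N-1)$). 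Since $\nabla u\in\big(W^{1,\tilde A}(\Om)\big)^N\subset\big(L^{\tilde A^*}(\Om)\big)^N$ and $u\in L^{\tilde A^*}(\Om)$, we obtain $u\in W^{1,\tilde A^*}(\Om)$ with a uniform norm bound, i.e.\ a continuous embedding $W^{2,\tilde A}(\Om)\to W^{1,\tilde A^*}(\Om)$.

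\emph{Second step.} A short computation gives $p_1\lessgtr N$ according as $\frac{p+1}{p}\lessgtr\frac{N}{2}$, and when $p_1=N$ one has $\al_1=\frac{\al N}{p+1}$ with $\al_1\lessgtr N-1$ according as $\al\lessgtr\big(1-\frac1N\big)\big(Np-(p+1)\big)$. If $\tilde A^*$ is of exponential type, or of power type with $p_1>N$, or of power type with $p_1=N$ and $\al_1>N-1$ --- equivalently $\frac{p+1}{p}>\frac{N}{2}$, respectively $\frac{p+1}{p}=\frac{N}{2}$ with $\al>\big(1-\frac1N\big)\big(Np-(p+1)\big)$ --- then $i_{\tilde A^*}<+\infty$, Theorem \ref{th:cont:emb:cont:comp} gives a compact embedding $W^{1,\tilde A^*}(\Om)\hookto C_b(\Om)$, and composing with the first step yields $W^{2,\tilde A}(\Om)\hookto C_b(\Om)$, the middle alternative of \eqref{OS:cont:emb} and \eqref{OS:emb:comp}. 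Otherwise $\tilde A^*$ is of power type with $p_1<N$, or with $p_1=N$ and $\al_1\le N-1$; then $i_{\tilde A^*}=+\infty$, Theorem \ref{th:cont:emb:0} gives $W^{1,\tilde A^*}(\Om)\to L^{(\tilde A^*)^*}(\Om)$, and evaluating $H^*$ in Example \ref{ex:HN:log} with $H=\tilde A^*$ reproduces exactly the three lines of \eqref{def:Atilde:N:N}, e.g.\ for $p_1<N$,
\[
(\tilde A^*)^*(s)\sim\big(s^{\,p_1}(\ln s)^{\,\al_1}\big)^{\frac{N}{N-p_1}}\sim s^{\frac{N(p+1)}{Np-2(p+1)}}(\ln s)^{\frac{\al N}{Np-2(p+1)}}.
\]
Composing with the first step gives the continuous embedding $W^{2,\tilde A}(\Om)\subset L^{(\tilde A^*)^*}(\Om)$ of \eqref{OS:cont:emb}; the compactness in \eqref{OS:emb:comp} follows because the first step is continuous while, by Theorem \ref{th:comp:emb}, $W^{1,\tilde A^*}(\Om)\hookto L^{A_1}(\Om)$ is compact for every $A_1\prec\hspace{-1.5mm}\prec(\tilde A^*)^*$.

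The only genuine difficulty is the bookkeeping: one must check that the thresholds on $\frac{p+1}{p}$ and on $\al$ produced by the two successive applications of Example \ref{ex:HN:log} coincide exactly with those in \eqref{OS:cont:emb}--\eqref{def:Atilde:N:N}, and treat with care the borderline equalities --- $\frac{p+1}{p}=N$ in the first step and $\frac{p+1}{p}=\frac{N}{2}$, with the sub--cases $\al_1<N-1$, $\al_1=N-1$, $\al_1>N-1$, in the second --- these being precisely where $i_{\tilde A}$ or $i_{\tilde A^*}$ passes from infinite to finite and the target space changes type.
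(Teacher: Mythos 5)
Your proposal is correct and follows essentially the same route as the paper: two successive applications of the first–order Orlicz–Sobolev embedding theorems (Theorems \ref{th:cont:emb:cont:comp}, \ref{th:cont:emb:0}, \ref{th:cont:emb}, \ref{th:comp:emb}) via Example \ref{ex:HN:log}, first with $\tilde A$ and then with $\tilde A^*$, with the same case analysis at the thresholds $\frac{p+1}{p}=N$ and $\frac{p+1}{p}=\frac N2$. The exponent bookkeeping ($p_1\lessgtr N\iff\frac{p+1}{p}\lessgtr\frac N2$, $\al_1\lessgtr N-1\iff\al\lessgtr(1-\frac1N)(Np-(p+1))$) matches the paper's computation exactly.
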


\begin{proof}[Proof of Theorem \ref{th:emb}]
Since  Lemma \ref{lem:tilde:aA} and  Example \ref{example1}, 
$$   
\widetilde A(s)\sim s^{\frac{p+1}{p}}(\ln s)^{\frac{\al}{p}}.
$$
Moreover, by    Orlicz--Sobolev continuous embeddings, 
\begin{equation*}
W^{2,{\tilde A}}(\Om)\to \begin{cases}
W^{1,({\tilde A})^*} (\Om)&\qq{if} \frac{p+1}{p}\le N,\quad \al\le p(N-1)\\[2mm]
C_b^{\,1}(\Om)&\qq{if} \frac{p+1}{p}= N,\ \al> p(N-1),\quad\text{or } \frac{p+1}{p}> N,\ \al\in\R;
\end{cases}
\end{equation*}
with $({\widetilde A})^*$ defined  specifically by
\begin{equation*}
({\widetilde A})^*\sim	\begin{cases}
\Big(s^\frac{p+1}{p} \big[\log(s)\big]^\frac{\al }{p}\Big)^{\frac{N}{N-\frac{p+1}{p}}},
& \qq{if} \frac{p+1}{p} <  N, \\[2mm]
e^{s^{N/(N-1-\frac{\al}{p})}}, & \qq{if} \frac{p+1}{p} = N,\ \al<p(N-1)\\
e^{e^{s^{N'}}},& \qq{if} \frac{p+1}{p} = N,\ \al=p(N-1), 
\end{cases}
\end{equation*}
cf.   Theorem \ref{th:cont:emb},  Theorem \ref{th:cont:emb:cont:comp}, example \ref{ex:HN:log}, and definition \eqref{def:HN:log}. Observe that $\frac{N}{N-\frac{p+1}{p}}\frac{p+1}{p}
=\frac{N(p+1)}{Np-(p+1)}=\left(\frac p{p+1}-\frac{1}{N}\right)^{-1}$.

Iterating the above procedure we obtain \eqref{OS:cont:emb}, with $\big({\widetilde A}^*\big)^*$ defined,  by 
\begin{equation*}
\big({\widetilde A}^*\big)^*\sim\begin{cases}
\Big(s^\frac{N(p+1)}{Np-(p+1)} \big[\log(s)\big]^\frac{\al N}{Np-(p+1)}\Big)^{\frac{Np-(p+1)}{Np-2(p+1)}} & \text{if } \frac{p+1}{p} <  \frac{N}{2}, \\[2mm]
e^{s^{N/\big(N-1-\al\frac{N}{Np-(p+1)}\big)}} & \text{if } \frac{p+1}{p} = \frac{N}{2},\ \al<\left(1-\frac{1}{N}\right)\big(Np-(p+1)\big)\\[3mm]
e^{e^{s^{N'}}} & \text{if } \frac{p+1}{p} = \frac{N}{2},\ \al=\left(1-\frac{1}{N}\right)\big(Np-(p+1)\big), 
\end{cases}
\end{equation*}
which is obtained iterating twice \eqref{def:HN:log}. 
It can be equivalently rewritten as \eqref{def:Atilde:N:N}. Observe that 
$\frac{N(p+1)}{Np-(p+1)}\frac{Np-(p+1)}{Np-2(p+1)}
=\frac{N(p+1)}{Np-2(p+1)}=\left(\frac p{p+1} -\frac{2}{N}\right)^{-1}$.

Moreover, from Theorem \ref{th:comp:emb}, the  Orlicz--Sobolev compact embeddings described in \eqref{OS:emb:comp} hold, 
see definition \ref{def:dom}{\rm (iii)} for $A_1$ increasing essentially more slowly than $\big(({\widetilde A})^*\big)^*$, denoted by $A_1\prec\hspace{-1.5mm}\prec\big(({\widetilde A})^*\big)^*$.
\end{proof}

\begin{cor}\label{cor:tildeA:q+1}
In particular,  under the conditions of Theorem  \ref{th:emb}, assume that one of the following two conditions holds:

{\rm (i)} either \eqref{h:sub} is satisfied, for any $\al\in \R$

{\rm (ii)} either \eqref{h} is satisfied, and  $\al>0$.

Then, the following Orlicz--Sobolev compact embedding holds
\begin{equation}\label{OS:emb:q+1} 
W^{2,{\tilde A}}(\Om)\hookto L^{q+1}(\Om).
\end{equation}
If \eqref{h} is satisfied, and  $\al=0,$ then the above embedding is continuous.
\end{cor}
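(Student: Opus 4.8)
The plan is to derive the compact embedding $W^{2,{\tilde A}}(\Om)\hookto L^{q+1}(\Om)$ as a direct consequence of Theorem \ref{th:emb}, by checking that the power-log $\mathcal N$-function $s\mapsto s^{q+1}$ is dominated essentially more slowly than the relevant target function in each of the cases distinguished there. Recall from Lemma \ref{lem:tilde:aA} and Example \ref{example1} that $\widetilde A(s)\sim s^{\frac{p+1}{p}}(\ln s)^{\frac{\al}{p}}$, so the behaviour of the embedding is governed by the exponent $\frac{p+1}{p}$ compared with $N/2$, exactly the trichotomy in Theorem \ref{th:emb}.

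First I would dispose of the easy regimes. If $\frac{p+1}{p}>\frac N2$, or $\frac{p+1}{p}=\frac N2$ with $\al>\left(1-\frac1N\right)\big(Np-(p+1)\big)$, then by \eqref{OS:emb:comp} we have $W^{2,{\tilde A}}(\Om)\hookto C_b(\Om)$, and since $\Om$ is bounded, $C_b(\Om)\hookto L^{q+1}(\Om)$ continuously, so the composite embedding is compact. In the borderline exponential cases $\frac{p+1}{p}=\frac N2$ with $\al\le\left(1-\frac1N\right)\big(Np-(p+1)\big)$, the target $\big({\widetilde A}^*\big)^*$ is of exponential (or double-exponential) type by \eqref{def:Atilde:N:N}, and trivially $\lim_{s\to\infty}s^{q+1}/\big({\widetilde A}^*\big)^*(cs)=0$ for every $c>0$, so $s^{q+1}\prec\hspace{-1.5mm}\prec\big({\widetilde A}^*\big)^*$ and \eqref{OS:emb:comp} gives the claim.

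The substantive case is $\frac{p+1}{p}<\frac N2$, where by \eqref{def:Atilde:N:N} we have $\big({\widetilde A}^*\big)^*\sim s^{\frac{N(p+1)}{Np-2(p+1)}}\big[\log s\big]^{\frac{\al N}{Np-2(p+1)}}$. Here I would compute, for any $\delta>0$,
$$
\frac{\big({\widetilde A}^*\big)^*(\delta s)}{s^{q+1}}\sim s^{\frac{N(p+1)}{Np-2(p+1)}-(q+1)}\big[\log s\big]^{\frac{\al N}{Np-2(p+1)}}\,,
$$
which tends to $+\infty$ provided $\frac{N(p+1)}{Np-2(p+1)}>q+1$, or $\frac{N(p+1)}{Np-2(p+1)}=q+1$ together with $\frac{\al N}{Np-2(p+1)}>0$. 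As already carried out verbatim in the proof of Lemma \ref{lem:comp:emb}, the strict inequality is equivalent to $\frac{N-2}{N}-\frac1{p+1}<\frac1{q+1}$, i.e. exactly \eqref{h:sub}; while the equality case is $\frac1{p+1}+\frac1{q+1}=\frac{N-2}{N}$ and reduces the sign condition to $\al>0$ (since on the hyperbola $\frac{\al N}{Np-2(p+1)}$ has the sign of $\al$). This yields $s^{q+1}\prec\hspace{-1.5mm}\prec\big({\widetilde A}^*\big)^*$ under either hypothesis (i) or (ii), so \eqref{OS:emb:comp} delivers \eqref{OS:emb:q+1}.

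Finally, for the remaining assertion, suppose \eqref{h} holds with $\al=0$; then $\widetilde A(s)\sim s^{\frac{p+1}{p}}$, $\big({\widetilde A}^*\big)^*\sim s^{\frac{N(p+1)}{Np-2(p+1)}}=s^{q+1}$ on the hyperbola, and \eqref{OS:cont:emb} gives the continuous embedding $W^{2,{\tilde A}}(\Om)\to L^{({\tilde A}^*)^*}(\Om)=L^{q+1}(\Om)$; compactness genuinely fails at this endpoint, consistent with the critical Sobolev case. The only mild obstacle is bookkeeping: one must verify that the algebraic identity $\frac{N(p+1)}{Np-2(p+1)}=q+1$ is exactly \eqref{h}, and keep track of the exponent-of-log sign, but both of these are already recorded in the proof of Lemma \ref{lem:comp:emb} and can simply be cited.
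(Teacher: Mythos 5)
Your proposal is correct and follows essentially the same route as the paper: both reduce the claim to Theorem \ref{th:emb} and verify $s^{q+1}\prec\hspace{-1.5mm}\prec\big(\widetilde A^*\big)^*$ via the exponent comparison $\frac{N(p+1)}{Np-2(p+1)}\ge q+1$, which is exactly \eqref{h:sub} or \eqref{h}, with the $\al>0$ sign condition settling the equality case and $\al=0$ giving only continuity. Your treatment is in fact slightly more careful than the paper's, since you explicitly dispose of the regimes $\frac{p+1}{p}\ge N/2$ (embedding into $C_b(\Om)$ or an exponential-type Orlicz space), which the paper's two-line proof leaves implicit.
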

\begin{proof}
Using Theorem \ref{th:emb}, we only have to realize that $A_1(s)=s^{q+1}$
increases essentially more slowly than $\big({\widetilde A}^*\big)^*$, i.e.,  $A_1\prec\hspace{-1.5mm}\prec\big({\widetilde A}^*\big)^*$, see definition \ref{def:dom}{\rm (iii)}.

We first check that $L^{({\tilde A}^*)^*} (\Om) \subset  L^{q+1}(\Om)$. Indeed, 
$$
\frac{N(p+1)}{Np-2(p+1)}\ge q+1\iff\frac p{p+1}-\frac{2}{N}\le \frac1{q+1}\iff\frac{N-2}{N}\le \frac 1{p+1}+\frac1{q+1}.
$$

Moreover, since $\al>0$ in case (ii), then  $A_1\prec\hspace{-1.5mm}\prec\big(({\widetilde A})^*\big)^*$,  and so \eqref{OS:emb:q+1} holds. 

If \eqref{h} is satisfied, and  $\al=0,$ then $A_1=\big({\widetilde A}^*\big)^*,$ and the embedding is continuous.
\end{proof}

\subsection{The Dirichlet problem} \label{regularity}

Let $\Om\subset\mathbb{R}^N$ be a bounded domain of class $C^2$, $H$ a ${\mathcal N}$-function and $f\in L^H(\Om)$.  Let us consider the Dirichlet problem
\begin{equation}\label{dir}
-\Delta u=f\ \hbox{in}\ \Om,\qquad u=0\ \hbox{on}\ \partial\Om.
\end{equation}
Solutions are understood in the weak sense, i.e.
$$\forall \varphi \in C_0^1(\Om), \quad 
\int_\Om \nabla u\cdot \nabla \varphi \, dx=\int_\Om f\varphi \, dx.$$

We have the following regularity result (cf. \cite[Theorem XI.8]{Benkirane},  and \cite[Theorem 4]{Jia-Li-Wang}).

\begin{thm}\label{regu}
Let $\Om\subset\mathbb{R}^N$ be a bounded domain of class $C^2$.
Assume that $H$ and $\widetilde{H}$ satisfies the $\Delta_2$-condition. Then, the unique solution  $u=K(f)$ of  problem \eqref{dir} belongs to $ W^{2,H}(\Om) \cap W_0^{1,H}(\Om)$, $-\Delta u=f$ a.e.  and 
\begin{equation}\label{est2A}
\|u\|_H+\sum_i \big\|\partial_{x_i} u\big\|_H +\sum_{ij}\big\|\partial^2_{x_ix_j}u\big\|_H\le C\|f\|_H
\end{equation}
for some $C=C(N,H,\Om)$. 
\end{thm}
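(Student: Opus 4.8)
The plan is to obtain the result by approximation, reducing it to an a priori Calder\'on--Zygmund estimate in $L^H$ for smooth data. First I would collect the soft facts: since $H$ and $\widetilde H$ satisfy the $\Delta_2$-condition at infinity (enough because $\Om$ is bounded), $L^H(\Om)$ is a separable reflexive Banach space, $C_0^\infty(\Om)$ is dense in it, $L^H(\Om)\hookrightarrow L^1(\Om)$ (as $H$ dominates $t$ near infinity), and by Theorem~\ref{th:conv:mean=norm} convergence in $H$-mean coincides with convergence in $\|\cdot\|_H$; see Lemma~\ref{delta2bis} and Proposition~\ref{proLH}. Choose $f_n\in C_0^\infty(\Om)$ with $\|f_n-f\|_H\to0$. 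Classical elliptic regularity on the $C^2$ domain $\Om$ gives $u_n:=K(f_n)\in W^{2,s}(\Om)\cap W_0^{1,s}(\Om)$ for every $s<\infty$, with $-\De u_n=f_n$ pointwise. Everything then follows once \eqref{est2A} is proved, with $C$ independent of $n$, for the smooth functions $u_n$, and one passes to the limit.

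The heart of the matter --- and the step I expect to be the main obstacle --- is the a priori bound $\|D^2u_n\|_H\le C\|f_n\|_H$. I would localize by a finite partition of unity subordinate to a cover of $\overline\Om$, treating interior and boundary patches separately. On an interior patch the Newtonian potential representation $\partial^2_{ij}u=-c_N\de_{ij}f+\mathrm{p.v.}\!\int_{\R^N}\Ga_{ij}(x-y)f(y)\,dy$ exhibits $D^2u$ as the image of $f$ under a Calder\'on--Zygmund singular integral operator $T$, and the key point is that $T$ is bounded on $L^H(\R^N)$: being bounded on $L^p(\R^N)$ for all $1<p<\infty$, and the Boyd indices of $L^H$ satisfying $1<p_H\le q_H<\infty$ precisely because $H,\widetilde H\in\Delta_2$, Boyd's interpolation theorem upgrades the $L^p$-bounds (for one exponent just below $p_H$ and one just above $q_H$) to an $L^H$-bound; alternatively one argues directly from a good-$\la$ inequality together with the $\Delta_2$ and $\nabla_2$ conditions. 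On a boundary patch one flattens $\p\Om$ by a $C^2$ diffeomorphism --- which transforms $L^H$ and $W^{2,H}$ into themselves with equivalent norms, since its Jacobian and that of its inverse are bounded above and below --- reduces to a half-space, where the Green function is the fundamental solution plus its odd reflection, and applies the whole-space bound once more; the $C^2$ regularity of $\p\Om$ is exactly what keeps the commutator and lower-order error terms integrable. Summing the local estimates yields $\|D^2u_n\|_H\le C\|f_n\|_H$. The lower-order norms are then controlled directly: $u_n(x)=\int_\Om G(x,y)f_n(y)\,dy$ and $\na u_n(x)=\int_\Om\na_xG(x,y)f_n(y)\,dy$ with $|G(x,y)|\lesssim|x-y|^{2-N}$ and $|\na_xG(x,y)|\lesssim|x-y|^{1-N}$ (classical bounds on $C^2$ domains), so, $\Om$ being bounded, the kernels lie in $L^1$ and the generalized Young inequality for Orlicz spaces gives $\|u_n\|_H+\|\na u_n\|_H\le C\|f_n\|_H$; altogether \eqref{est2A} holds uniformly in $n$.

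Finally I would pass to the limit. By \eqref{est2A} the sequence $\{u_n\}$ is bounded in the reflexive space $W^{2,H}(\Om)$, so along a subsequence $u_n\wto u$ there; weak lower semicontinuity of $\|\cdot\|_H$ transfers \eqref{est2A} to $u$. Since $\De u_n=-f_n\to-f$ strongly in $L^H$ while $\De u_n\wto\De u$, we get $-\De u=f$ a.e.; moreover $\{u_n\}$ is bounded in $W^{2,1}(\Om)$, so $u_n\to u$ in $W^{1,1}(\Om)$ by Rellich--Kondrachov, which lets one pass to the limit in the weak formulation $\int_\Om\na u_n\cdot\na\vf=\int_\Om f_n\vf$ and identify $u$ as the weak solution of \eqref{dir}, i.e. the natural extension of $K$ to $L^H$; and $u\in W_0^{1,H}(\Om)$ because this subspace is weakly closed in $W^{1,H}(\Om)$. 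Uniqueness is immediate: the difference of two solutions lies in $W_0^{1,H}(\Om)\subset W_0^{1,1}(\Om)$ and is distributionally harmonic, hence zero. I expect all the difficulty to be concentrated in the middle paragraph --- the $L^H$-boundedness of singular integrals and the bookkeeping in the boundary flattening --- the rest being routine.
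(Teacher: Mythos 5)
The paper does not actually prove Theorem~\ref{regu}: it is imported verbatim from the literature, with the statement followed only by the citations \cite[Theorem XI.8]{Benkirane} and \cite[Theorem 4]{Jia-Li-Wang}. So there is no in-paper argument to compare against; what you have written is a self-contained sketch of the standard proof, and it follows essentially the route those references (and the general Calder\'on--Zygmund theory in rearrangement-invariant spaces) take: approximation by smooth data, interior estimate via the Newtonian potential and boundedness of singular integrals on $L^H$, boundary flattening, and a limit passage using reflexivity. Your identification of the crux --- that $T$ bounded on all $L^p$, $1<p<\infty$, plus nontrivial Boyd indices $1<p_H\le q_H<\infty$ (equivalent to $H,\widetilde H\in\Delta_2$) gives boundedness on $L^H$ by Boyd's interpolation theorem --- is exactly the content that the cited results encapsulate, so the sketch is correct in outline and consistent with how the paper uses the theorem.

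Two small points deserve more care if you were to write this out in full. First, Boyd's theorem and the extension-by-zero step take place on $\R^N$, where one needs the \emph{global} $\Delta_2$-condition; since $\Om$ is bounded you must first replace $H$ by a globally $\Delta_2$ function equivalent to it near infinity (as in Remark~\ref{rem:H:0}), which changes $L^H(\Om)$ only up to equivalent norm --- worth saying explicitly, because the hypotheses of the theorem as stated only give $\Delta_2$ at infinity in the relevant applications. Second, the uniqueness step is not quite ``immediate'': a function $w\in W_0^{1,1}(\Om)$ that is distributionally harmonic is not killed by the maximum principle directly; one should argue by duality, testing $-\De w=0$ against $\varphi=\mathtt K(\psi)$ for $\psi\in C_0^\infty(\Om)$ (justified by density of $C_0^\infty$ in $W_0^{1,H}$ and the $2$nd H\"older inequality \eqref{2HIn}), which gives $\int_\Om w\,\psi\,dx=0$ for all $\psi$ and hence $w\equiv0$. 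Neither point is a gap in the strategy, only in the bookkeeping.
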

See  Lemma \ref{delta2bis} (iii) for having a sufficient condition guarantying that $H$ and $\widetilde{H}$ satisfies the $\Delta_2$-condition.

\bibliographystyle{abbrv}
\bibliography{ref}

\end{document}